\numberwithin{figure}{section}
\numberwithin{equation}{section}
\theoremstyle{remark}
\newtheorem*{rem*}{\protect\remarkname}
\theoremstyle{plain}
\newtheorem{thm}{\protect\theoremname}[section]
\theoremstyle{plain}
\newtheorem{cor}[thm]{\protect\corollaryname}
\theoremstyle{plain}
\newtheorem{prop}[thm]{\protect\propositionname}
\theoremstyle{plain}
\newtheorem{lem}[thm]{\protect\lemmaname}
\theoremstyle{definition}
\newtheorem{defn}[thm]{\protect\definitionname}
\theoremstyle{definition}
\newtheorem{example}[thm]{\protect\examplename}
\theoremstyle{remark}
\newtheorem{notation}[thm]{\protect\notationname}
\date{}
\providecommand{\corollaryname}{Corollary}
\providecommand{\definitionname}{Definition}
\providecommand{\examplename}{Example}
\providecommand{\lemmaname}{Lemma}
\providecommand{\notationname}{Notation}
\providecommand{\propositionname}{Proposition}
\providecommand{\remarkname}{Remark}
\providecommand{\theoremname}{Theorem}
\begin{document}

\title{\textbf{Eigenvalues and spectral gap in sparse random simplicial complexes}}

\author{Shaked Leibzirer\thanks{Partially supported by ISF 771/17 and BSF grant 2018330.}
~and~ Ron Rosenthal$^{*}$}

\maketitle


\begin{abstract}
We consider the adjacency operator $A$ of the Linial-Meshulam model
$X(d,n,p)$ for random $d-$dimensional simplicial complexes on $n$
vertices, where each $d-$cell is added independently with probability
$p\in[0,1]$ to the complete $(d-1)$-skeleton. We consider sparse
random matrices $H$, which are generalizations of the centered and
normalized adjacency matrix $\mathcal{A}:=(np(1-p))^{-1/2}\cdot(A-\mathbb{E}\left[A\right])$,
obtained by replacing the Bernoulli$(p)$ random variables used to
construct $A$ with arbitrary bounded distribution $Z$. We obtain
bounds on the expected Schatten norm of $H$, which allow us to prove
results on eigenvalue confinement and in particular that $\left\Vert H\right\Vert _{2}$
converges to $2\sqrt{d}$ both in expectation and $\mathbb{P}-$almost
surely as $n\to\infty$, provided that $\mathrm{Var}(Z)\gg\frac{\log n}{n}$.
The main ingredient in the proof is a generalization of \cite[Theorem 4.8]{LHY18}
to the context of high-dimensional simplicial complexes, which may
be regarded as sparse random matrix models with dependent entries.
\end{abstract}


\section{Introduction}

The Erd\H{o}s--Rényi graph (\cite{ER59,ER61}) $G\left(n,p\right)$,
is a random graph on $n$ vertices, where each edge is added independently
with probability $p\in\left[0,1\right]$ that might depend on $n$.
The model and particularly the spectrum of its adjacency matrix $A$
has been extensively studied. For insteance, it follows from Wigner's
semicircle theorem that the spectrum of $(np(1-p))^{-1/2}(A-\mathbb{E}[A])$
converges weakly in probability to the semicircle law, provided $\lim_{n\to\infty}np(1-p)=\infty$,
and it is shown in \cite{FK81,VH07} that under the assumption $np\gg\log^{4}\left(n\right),$
namely $\lim_{n\to\infty}(np)^{-1}\log^{4}\left(n\right)=0$, one
has
\begin{equation}
\mathbb{E}\left[\frac{\left\Vert A-\mathbb{E}\left[A\right]\right\Vert }{\sqrt{np}}\right]\leq2\left(1+o\left(1\right)\right),\qquad\text{ as \ensuremath{n\to\infty},}\label{eq:1-1}
\end{equation}
which is a well-known result regarding the spectral gap of the adjacency
matrix of the homogeneous Erd\H{o}s--Rényi graph. Recently, it was
shown independently in \cite{MR4116720} and \cite{LHY18} that \eqref{eq:1-1}
holds under the weaker assumption $np(1-p)\gg\log\left(n\right)$,
which by \cite{MR4116720,MR3945756,alt2021extremal} is the optimal
regime.

The Linial-Meshulam model, c.f. \cite{LM06,MW09}, is a high-dimensional
generalization of the Erd\H{o}s--Rényi model. Given $n,d\in\mathbb{N}$
such that $n\geq d+1$, and $p\in\left[0,1\right]$ that might depend
on $n$, the Linial-Meshulam model $X\equiv X\left(d,n,p\right)$,
is a random $d$-dimensional simplicial complex on $n$ vertices with
a complete $(d-1)$-skeleton, in which each $d$-cell is added to
$X$ independently with probability $p$. When $d=1$, the model reduces
to the Erd\H{o}s--Rényi random graph. Since its appearance the Linial-Meshulam
model attracted much attention, see for example \cite{MW09,NK10,BHK11,W11,HJ13,ALLM13,CCFK16,GW16,LP16,RK17,YT17,PR17,MR3639605,ODMP18,LP18,LP19,MR3984242,fountoulakis2020algebraic,LP22}.

Let $A$ be the adjacency operator associated with the Linial-Meshulam
model $X\equiv X(d,n,p)$, see Section \ref{sec:Results} for a precise
definition. In this paper we consider a random matrix $H$ which is
a generalization of the centered and normalized adjacency matrix,
defined by $\mathcal{A}:=\left(np\left(1-p\right)\right)^{-\frac{1}{2}}\left(A-\mathbb{E}\left[A\right]\right)$,
where the Bernoulli$(p)$ random variables used to construct $A$
are replaced with an arbitrary bounded distribution $Z$. The matrix
$H$ is a sparse self-adjoint random matrix equipped with the same
dependent structure as $\mathcal{A}$, and in particular its entries
are only independent (up to the self-adjointness constraint) if and
only if $d=1$ (see \cite{RK17} for further details).

Our main result is a generalization of \eqref{eq:1-1} to random matrices
of type $H$ and in particular to the rescaled and centered adjacency
matrix $\mathcal{A}$ of random simplicial complexes $X(d,n,p)$.
Previous results related to the spectrum of the adjacency matrix $A$
for arbitrary $d\in\mathbb{N}$ were introduced in \cite{RK17} and
\cite{GW16}. In \cite[Theorem 5.1]{RK17}, the authors assume $np(1-p)\gg\log^{4}\left(n\right)$
and at the cost of this stronger assumption (compared to $np(1-p)\gg\log\left(n\right)$)
establish \eqref{eq:1-1}, for all $d\geq2$ with the appropriate
optimal bound in the right hand side of \eqref{eq:1-1}. On the other
hand, in \cite[Theorem 2]{GW16}, it is shown that under the assumption
$np(1-p)\gg\log\left(n\right)$, one can obtain an upper bound on
the left hand side of \eqref{eq:1-1}, which is not optimal. In \cite{LHY18},
a key ingredient in problem the proof of \eqref{eq:1-1} for the Erd\H{o}s--Rényi
model, namely the case $d=1$ is \cite[Theorem 4.8]{LHY18}, which
assumes independent entries (up to self-adjointness). Due to the dependent
structure of the entries of $H$, one can not apply \cite[ Theorem 4.8]{LHY18}
whenever $d>1$. In this paper, we generalize \cite[ Theorem 4.8]{LHY18}
to random matrices of type $H$ (see Theorem \ref{thm:main_result}),
which allows us to obtain bounds on the $2k-$Schatten norm 
\[
\mathbb{E}\left[\left\Vert H\right\Vert _{S_{2k}}\right]:=\mathbb{E}\bigg[\sqrt[2k]{\text{Trace}\left(\left|H\right|^{2k}\right)}\bigg]\,,
\]
for all $d\geq1$, where $\left|H\right|=\sqrt{H^{*}H}$. Furthermore,
we use this bound in order to show that $\lim_{n\to\infty}\mathbb{E}[\left\Vert H\right\Vert _{2}]=2\sqrt{d}$,
provided $\text{Var}\left(Z\right)\gg n^{-1}\log n$, which by \cite{RK17}
is the optimal bound. We thus derive the optimal bound achieved in
\cite{RK17} under weaker assumptions, which coincide with those postulated
in \cite{GW16,LM06,MW09}. In addition to the norm bound, we improve
the bound on $\mathbb{P}(\left\Vert H\right\Vert _{2}>2\sqrt{d}+\varepsilon)$
obtained in \cite[Theorem 5.1]{RK17} for $\varepsilon>0$. We conclude
this section with a few words about the proof of Theorem \ref{thm:main_result}.
The proof is based on the simplicial structure of the entries of $H$.
Te structure allow us to translate the problem into a combinatorial
one by associating a simplicial complex with an embedded path with
each of the elements in the sum defining $\text{Trace}(\left|H\right|^{2k})$.
The simplicial structure brings into play new phenomena regarding
the relation between the path and the simplicial complexes that do
not arise in the graph case, and in particular is not entirely local.
Thus new ideas are required, see Lemma \ref{lem:Lemma1} and Lemma
\ref{Lem:lemma_2} for further details.

\section{Preliminaries and results\label{sec:Results}}

A finite simplicial complex $X$ on a vertex set $V$ is a finite
collection of subsets of $V$ that is closed under taking subsets.
Namely, if $\tau\in X$ and $\sigma\subseteq\tau$, then $\sigma\in X$.
The elements of $X$ are called \emph{cells}, and the dimension of
a cell $\tau$, is defined as $\dim(\tau):=\left|\tau\right|-1$.
For $j\geq-1$, $X^{j}$ denotes the set of cells of dimension $j$,
which we refer to as $j$-cells. The dimension of the complex $X$,
denoted by $d$, is defined as $d:=\max_{\tau\in X}\dim(\tau)$. For
$\ell<d$ , the $\ell$-skeleton of $X$ is the simplicial complex
that consists of all cells of dimension $\leq\ell$ in $X$. The complex
$X$ is said to have a full $\ell$-dimensional skeleton if its $\ell$-skeleton
contains all subsets of $X^{0}\subset V$ of size $\leq$ $\ell+1$.
Throughout the paper we assume that $X$ has a full $\left(d-1\right)$-skeleton
and that $X^{0}=V$.

For $j\geq1$, every $j$-cell $\sigma=\left\{ \sigma^{0},\ldots,\sigma^{j}\right\} $
has two possible orientations, corresponding to the possible orderings
of its vertices, up to an even permutation. Denote an oriented cell
by square brackets, and a flip of orientation by an overline. For
example, one orientation of $\sigma=\left\{ x,y,z\right\} $ is $\left[x,y,z\right]=\left[y,z,x\right]=\left[z,x,y\right]$.
The other orientation is $\overline{\left[x,y,z\right]}=\left[y,x,z\right]=\left[x,z,y\right]=\left[z,y,x\right]$.
Denote by $X_{\pm}^{j}$ the set of oriented $j$-cells (observe that
$|X_{\pm}^{j}|=2\left|X^{j}\right|$ for $j\geq1$) and set $X_{\pm}^{0}=X^{0}$.
Given two oriented cells $\sigma,\sigma'\in X_{\pm}^{d-1},$ let $\sigma\cup\sigma'$
and $\sigma\cap\sigma'$ denote the union and intersection of the
corresponding unoriented cells.

Define the boundary $\partial\sigma$ of the $\left(j+1\right)$-cell
$\sigma=\left\{ \sigma^{0},\ldots,\sigma^{j+1}\right\} \in X^{j+1}$
as the set of $j-$cells obtained by omitting the $i$-th vertex from
$\sigma$, for every $0\leq i\leq j+1$. Namely, 
\[
\partial\sigma=\left\{ \left\{ \sigma^{0},\ldots,\sigma^{i-1},\sigma^{i+1},\ldots,\sigma^{j+1}\right\} \,:\,0\leq i\leq j+1\right\} \subseteq X^{j}.
\]
An oriented $\left(j+1\right)$-cell $\left[\sigma^{0},\ldots,\sigma^{j+1}\right]\in X_{\pm}^{j+1}$
induces orientations on the $j$-cells in its boundary, as follows:
the cell $\left\{ \sigma^{0},\ldots,\sigma^{i-1},\sigma^{i+1},\ldots,\sigma^{j+1}\right\} $
is oriented as $\left(-1\right)^{i}\left[\sigma^{0},\ldots,\sigma^{i-1},\sigma^{i+1},\ldots,\sigma^{j+1}\right]$,
where $-\sigma:=\overline{\sigma}$. As introduced in \cite{PR17}
one can define a neighboring relation on $X_{\pm}^{j}$, where $\sigma,\sigma'\in X_{\pm}^{j}$
are called neighbors, denoted $\sigma\sim\sigma'$, if there exists
an oriented $\left(j+1\right)$-cell, $\tau\in X_{\pm}^{j+1}$, such
that both $\sigma$ and $\overline{\sigma'}$ are in the boundary
of $\tau$ as oriented cells (see Figure \ref{fig:Figure1} for illustration
in the case $d=2$). Observe that this definition guarantees that
for each pair $\left(\sigma,\sigma'\right)\in X^{j}\times X^{j}$
satisfying $\sigma\cup\sigma'\in X^{j+1}$, either $\sigma\sim\sigma'$
or $\sigma\sim\overline{\sigma'}$, but not both.

\begin{center}
	\begin{figure}[h]
	\begin{centering}
		\includegraphics[scale=0.45]{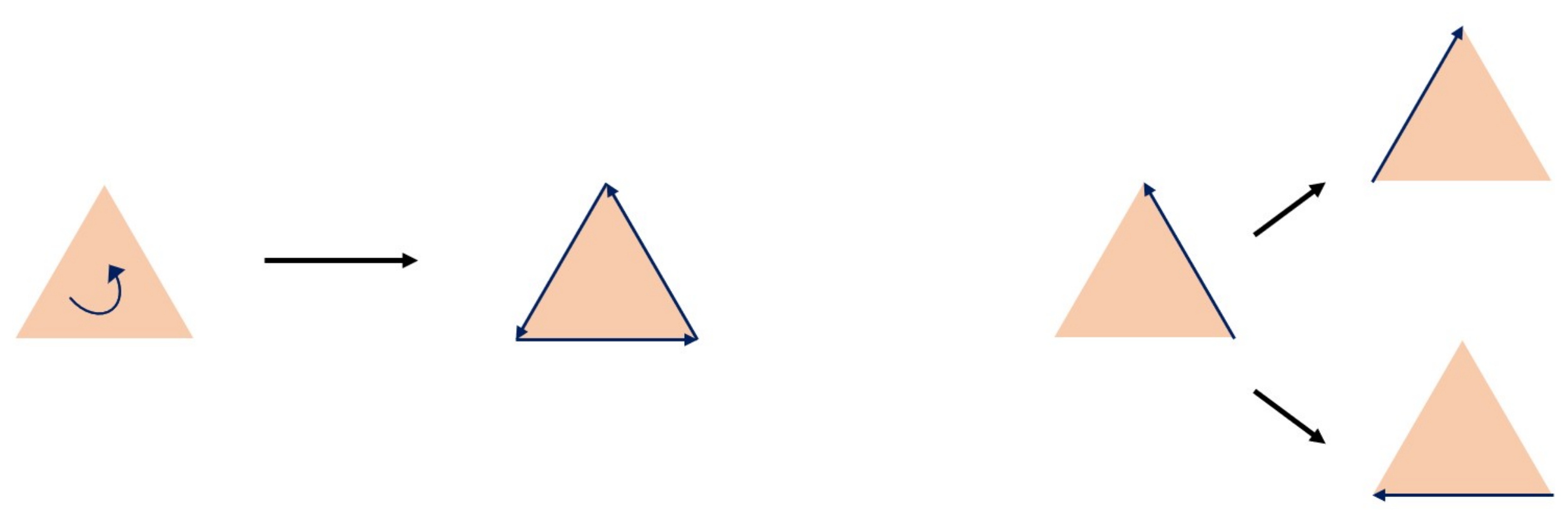}
		\par\end{centering}
		\caption{On the left: an oriented $2$-cell and the orientation it induces
		on its boundary. On the right: an oriented $1$-cell in a $2$-cell
		together with its two oriented neighboring $1$-cells.\label{fig:Figure1}}
	\end{figure}
\par\end{center}

Let $K^{d}:=K\left(d,n\right)$ be the complete $d$-complex with
vertex set $V=[n]:=\left\{ 1,2,\ldots,n\right\} $. That is to say
that $K^{d}$ consists of all subsets of $[n]$ of size $\leq d+1$. 

The Linial-Meshulam model $X=X\left(d,n,p\right)$, with $n,d\in\mathbb{N}$
satisfying $n\geq d+1$ and $p=p\left(n\right)\in\left[0,1\right]$,
is a random $d$-dimensional simplicial complex on $n$ vertices,
with a complete $\left(d-1\right)$-skeleton in which each $d$-cell
of $K^{d}$ is added to $X$ independently with probability $p$.

We fix an arbitrary choice of orientation of the $\left(d-1\right)$-cells
in the complete complex, and denote it as $K_{+}^{d-1}\subset K_{\pm}^{d}$.
Observe that this choice of orientation determine the orientation
of the elements in $X^{d-1}$ since $X^{d-1}=K^{d-1}$. Note that
there is a natural bijection between $K_{+}^{d-1}$ and $K^{d-1}$,
and hence also between $X_{+}^{d-1}$ and $X^{d-1}$. 

The adjacency matrix $A$, associated to the random complex $X$,
is a $|X_{+}^{d-1}|\times|X_{+}^{d-1}|$ random matrix, defined via
\[
A_{\sigma\sigma'}=\begin{cases}
1 & \text{if \ensuremath{\sigma\overset{X}{\sim}\sigma'}}\\
-1 & \text{if \ensuremath{\sigma\overset{X}{\sim}\overline{\sigma'}}}\\
0 & \text{otherwise}
\end{cases},\qquad\forall\sigma,\sigma'\in X_{+}^{d-1}.
\]
Our main result concerns a natural generalization of the centered
and normalized adjacency matrix of $A$, defined by
\[
H_{\sigma\sigma'}:=\begin{cases}
\frac{Z_{\tau}-\mathbb{E}\left[Z\right]}{\sqrt{n\text{Var}\left(Z\right)}} & \text{if }\sigma\overset{K^{d}}{\sim}\sigma'\text{ and \ensuremath{\sigma\cup\sigma'=\tau}}\\
-\frac{Z_{\tau}-\mathbb{E}\left[Z\right]}{\sqrt{n\text{Var}\left(Z\right)}} & \text{if }\sigma\overset{K^{d}}{\sim}\overline{\sigma'}\text{ and \ensuremath{\sigma\cup\sigma'=\tau}}\\
0 & \text{otherwise}
\end{cases},\qquad\forall\sigma,\sigma'\in X_{+}^{d-1},
\]
where $Z$ is a bounded random variable with positive variance and
$\left(Z_{\tau}\right)_{\tau\in K^{d}}$ are i.i.d. copies of it. 
\begin{rem*}
The results stated below for the matrix $H$ hold also for the unsigned
version of the matrix 
\begin{equation}
\tilde{H}_{\sigma\sigma'}=\begin{cases}
\frac{Z_{\sigma\cup\sigma'}-\mathbb{E}\left[Z\right]}{\sqrt{n\text{Var}\left(Z\right)}} & \text{if }\ensuremath{\sigma\cup\sigma'\in X^{d}}\\
0 & \text{otherwise}
\end{cases},\qquad\forall\sigma,\sigma'\in X_{+}^{d-1}.\label{eq:The_matrix_tilde_H}
\end{equation}

Observe that by choosing $Z$ to be a Bernoulli random variable with
parameter $p$, the matrix $H$ reduces to the centered and normalized
adjacency matrix of $X$, defined by 
\[
\mathcal{A}:=\frac{1}{\sqrt{nq}}\left(A-\mathbb{E}\left[A\right]\right),
\]
where 
\[
q:=q(n)=p(1-p).
\]
\end{rem*}
Recall that for a square matrix $M\in\mathbb{R}^{n\times n}$ and
$p\geq1$, the $p-$Schatten norm of $M$ is defined via
\[
\left\Vert M\right\Vert _{S_{p}}=\sqrt[p]{\text{Trace}\left(\left|M\right|^{p}\right)},
\]
where $\left|M\right|=\sqrt{MM^{*}}$. 

We now state our main result.
\begin{thm}
\label{thm:main_result}For every $d\in\mathbb{N}$, there exist constants
$C_{d},c_{d}\in(0,\infty)$ depending only on $d$, such that for
every $n\geq d+1$ and any integer $k:=k\left(n\right)\geq d$
\[
\sqrt[2k]{\mathbb{E}\left[\left\Vert H\right\Vert _{S_{2k}}^{2k}\right]}\leq\Phi\left(\theta_{k},\theta_{k}^{*}\right),
\]
where 
\[
\theta_{k}:=\sqrt{\frac{n-d}{n}}{n \choose d}^{\frac{1}{2k}}\qquad,\qquad\theta_{k}^{*}:=\left\Vert Z_{\tau}-\mathbb{E}\left[Z\right]\right\Vert _{\infty}\left({n \choose d}\cdot\frac{d\left(n-d\right)}{\left(n\text{Var}\left(Z\right)\right)^{k}}\right)^{\frac{1}{2k}},
\]
and
\[
\Phi\left(x,y\right):=\sqrt[2k]{d!d}\cdot y\left(\frac{x}{y}+2\sqrt{k}\right)^{\frac{d-1}{k}}\left(2\sqrt{d}\left(\frac{x}{y}+2\sqrt{k}\right)+C_{d}\left(\frac{x}{y}+2\sqrt{k}\right)^{\frac{2}{3}}\Big(\log\left(\frac{x}{y}+2\sqrt{k}\right)\Big)^{2/3}+c_{d}\sqrt{k}\right).
\]
\end{thm}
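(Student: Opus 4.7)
The approach is the \emph{moment method}. Since $H = H^*$, we have $\|H\|_{S_{2k}}^{2k} = \text{Trace}(H^{2k})$, and expanding the trace yields
\[
\mathbb{E}\bigl[\text{Trace}(H^{2k})\bigr] = \sum_{\sigma_0,\ldots,\sigma_{2k-1} \in X_+^{d-1}} \mathbb{E}\bigg[\prod_{i=0}^{2k-1} H_{\sigma_i \sigma_{i+1}}\bigg], \qquad \sigma_{2k} := \sigma_0.
\]
By the definition of $H$, a nonzero summand requires each pair $(\sigma_i,\sigma_{i+1})$ to neighbor in $K^d$; setting $\tau_i := \sigma_i\cup\sigma_{i+1}$, the product equals $\pm(n\,\text{Var}(Z))^{-k}\prod_i(Z_{\tau_i} - \mathbb{E}[Z])$. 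Independence across distinct $d$-cells together with the centering $\mathbb{E}[Z_\tau - \mathbb{E}[Z]] = 0$ imply that only walks in which every $d$-cell among the $\tau_i$ appears with multiplicity at least two contribute. The plan is then to partition the surviving closed walks by \emph{shape}, i.e., the isomorphism class of the pair $(Y, w)$ consisting of the simplicial subcomplex $Y \subseteq K^d$ generated by $\{\tau_0,\ldots,\tau_{2k-1}\}$ together with the abstract walk $w$ on it.

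For a shape with $v := |Y^0|$ vertices and $f := |Y^d|$ distinct $d$-cells carrying multiplicities $m_1,\ldots,m_f \geq 2$ with $\sum m_j = 2k$, there are at most $n^v$ labeled realizations in $[n]$, and independence combined with the elementary bound $|\mathbb{E}[(Z - \mathbb{E}[Z])^m]| \leq \|Z - \mathbb{E}[Z]\|_\infty^{m-2}\,\text{Var}(Z)$ yields an expectation bounded by $n^{-k}\,\text{Var}(Z)^{f-k}\,\|Z-\mathbb{E}[Z]\|_\infty^{2k-2f}$. Since each step of the walk moves between $(d-1)$-cells sharing a common $d$-face, every new $d$-cell contributes at most one new vertex, forcing $v \leq f + d$; equality holds for \emph{tree-like} shapes, in which each new $d$-cell is glued along a single $(d-1)$-face. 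The main term comes from tree-like shapes with $f = k$ (each cell visited exactly twice), where the joint enumeration of shapes and closed walks on them yields a count of order $(2\sqrt{d})^{2k}$ after accounting for the $d$-fold branching; multiplying by the $n^{v-k} = n^d$ embeddings and taking the $(2k)$-th root produces the leading contribution $\sqrt[2k]{d!\,d}\cdot 2\sqrt{d}\cdot \theta_k$ inside $\Phi$.

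Shapes of positive \emph{excess} $g := f + d - v \geq 1$ and walks with $f < k$ generate the correction terms. Each unit of excess costs a factor $n^{-1}$ in the embedding count and each short-visit cell is weighted by $\|Z - \mathbb{E}[Z]\|_\infty^2/\text{Var}(Z)$, both of which are packaged into the secondary parameter $\theta_k^*$. Grouping by excess class and summing the walks they support, in the spirit of \cite[Theorem 4.8]{LHY18}, produces the prefactor $(x/y + 2\sqrt k)^{(d-1)/k}$ together with the $C_d(x/y + 2\sqrt k)^{2/3}\bigl(\log(x/y + 2\sqrt k)\bigr)^{2/3}$ and $c_d\sqrt{k}$ correction terms appearing in the definition of $\Phi$.

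The main obstacle is the combinatorial enumeration underlying the previous step. In the graph case $d=1$, the excess of a closed walk is a classical local quantity (the number of non-tree edges in the walk's trace), and shapes are enumerated by decorating a skeleton tree with excess edges. For $d \geq 2$, by contrast, the complex $Y$ can glue $d$-cells along faces of arbitrary codimension, so the deficit $f + d - v$ is genuinely nonlocal and cannot be read off from edge-multiplicity data alone, as emphasized in the introduction. The combinatorial inequalities that replace the local excess bounds of \cite{LHY18}, namely bounds on the number of shapes of given excess and on the number of closed walks they support, are exactly the content of Lemma \ref{lem:Lemma1} and Lemma \ref{Lem:lemma_2}; once these are in hand, the remaining assembly into $\Phi(\theta_k, \theta_k^*)$ is a matter of careful bookkeeping mirroring the graph-case proof.
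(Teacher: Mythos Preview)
Your opening is correct: the moment expansion, the reduction to closed words in which every $d$-cell is traversed at least twice, and the inequality $v\le f+d$ all match the paper. But from that point on your sketch diverges from what the paper actually does, and the divergence is not cosmetic.

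You describe the remainder as a direct enumeration: group shapes by excess $g=f+d-v$, count tree-like shapes to extract $(2\sqrt d)^{2k}$, and sum the higher-excess corrections. You then claim that Lemma~\ref{lem:Lemma1} and Lemma~\ref{Lem:lemma_2} supply ``bounds on the number of shapes of given excess and on the number of closed walks they support.'' That is not what those lemmas do. Lemma~\ref{lem:Lemma1} is a H\"older-type reduction showing $\mathcal G(G_w;\mathbf N_{G_w})\le\mathcal G(T;\mathbf N_T)$ for some tree $T$ with the same vertex support; Lemma~\ref{Lem:lemma_2} is a second H\"older reduction that peels leaves from $T$ and produces the bound $\mathcal G(T;\mathbf N_T)\le d!\,(n\mathrm{Var}(Z))^{k}\,\theta_k^{2(|\mathrm{supp}_0(w)|-d)}(\theta_k^*)^{2k-2(|\mathrm{supp}_0(w)|-d)}$. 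Neither lemma counts shapes or walks; they bound the \emph{weighted embedding sum} for a fixed word.

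The step you are missing entirely is the \emph{comparison argument}. After the two lemmas the paper arrives at
\[
\mathbb E\bigl[\|H\|_{S_{2k}}^{2k}\bigr]\ \le\ d!\sum_{w\in\mathcal W_{2k+1}}\theta_k^{2(|\mathrm{supp}_0(w)|-d)}\qquad(\text{normalizing }\theta_k^*=1),
\]
and then does \emph{not} attempt to evaluate this sum combinatorially. Instead it observes that the same sum is bounded below by $(r-d+1)^{-1}\mathbb E[\|Y\|_{S_{2k}}^{2k}]$ for the reference matrix $Y$ built from $\mathrm{Ber}(1/4)$ variables on $r=[\theta_k^2]+k+d+1$ vertices, because for $Y$ every moment $\mathbb E[Y_\tau^m]\ge 1$ when $m\ge 2$. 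This transfers the problem to bounding $\mathbb E[\|Y\|_2^{2k}]$, which is handled by Propositions~\ref{prop:first_Bound_on_Y} and~\ref{propr:Second_estimation_on_Y} (the tail bound from \cite{RK17} plus Talagrand concentration). The terms $2\sqrt d\,(\cdot)$, $C_d(\cdot)^{2/3}\log^{2/3}(\cdot)$, and $c_d\sqrt k$ in $\Phi$ are exactly $2\sqrt{dr}$, $C_d r^{1/3}\log^{2/3}r$, and $c_d\sqrt k$ from those two propositions, not the output of an excess-by-excess summation. Your proposal gives no mechanism for producing the $r^{1/3}\log^{2/3}r$ correction, and the ``joint enumeration yields $(2\sqrt d)^{2k}$'' claim is precisely the step the paper bypasses via comparison rather than proves directly.
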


Theorem \ref{thm:main_result} allows us to control the operator norm
of $H$.
\begin{cor}
\label{cor:Norm_of_H} For every $d\in\mathbb{N}$, if $n\text{Var}\left(Z\right)\gg\log\left(n\right)$,
namely $\lim_{n\to\infty}\frac{\log(n)}{n\text{Var}(Z)}=0$, then
\[
\lim_{n\to\infty}\mathbb{E}\left[\left\Vert H\right\Vert _{2}\right]=2\sqrt{d}.
\]
\end{cor}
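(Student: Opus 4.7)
The plan is to obtain matching upper and lower bounds for $\mathbb{E}[\left\Vert H\right\Vert_{2}]$.

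For the upper bound, I would use that $\left\Vert H\right\Vert_{2}\le\left\Vert H\right\Vert_{S_{2k}}$ for every integer $k\ge 1$, together with Jensen's inequality, to deduce from Theorem \ref{thm:main_result} that
\[
\mathbb{E}[\left\Vert H\right\Vert_{2}]\;\le\;\big(\mathbb{E}[\left\Vert H\right\Vert_{S_{2k}}^{2k}]\big)^{1/(2k)}\;\le\;\Phi(\theta_{k},\theta_{k}^{*}),
\]
and then tune $k=k(n)$ so that $\Phi(\theta_{k},\theta_{k}^{*})\to 2\sqrt{d}$. The hypothesis $n\text{Var}(Z)\gg\log n$ is precisely what allows an intermediate choice $\log n\ll k(n)\ll n\text{Var}(Z)$, for example $k(n):=\lceil\sqrt{\log n\cdot n\text{Var}(Z)}\,\rceil$. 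Under this choice I would check termwise that $\theta_{k}\to 1$ (since $\log\binom{n}{d}\le d\log n\ll k$), that $\theta_{k}^{*}=\Theta\big((n\text{Var}(Z))^{-1/2}\big)$ with $\theta_{k}^{*}\sqrt{k}\to 0$ (since $k\ll n\text{Var}(Z)$), and that with $A:=\theta_{k}/\theta_{k}^{*}+2\sqrt{k}$ one has $\theta_{k}^{*}A=\theta_{k}+2\theta_{k}^{*}\sqrt{k}\to 1$ and $A^{(d-1)/k}\to 1$ because $\log A=O(\log n)\ll k$. The leading piece $2\sqrt{d}\cdot\theta_{k}^{*}A$ then tends to $2\sqrt{d}$; the term $c_{d}\theta_{k}^{*}\sqrt{k}$ vanishes; and the remaining correction, rewritten as $C_{d}(\theta_{k}^{*}A)^{2/3}(\theta_{k}^{*})^{1/3}(\log A)^{2/3}=O\big((n\text{Var}(Z))^{-1/6}(\log n)^{2/3}\big)$, also vanishes, since $u^{-1/6}(\log u)^{2/3}\to 0$ as $u\to\infty$. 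Together with the factor $(d!d)^{1/(2k)}\to 1$, this yields $\limsup_{n\to\infty}\mathbb{E}[\left\Vert H\right\Vert_{2}]\le 2\sqrt{d}$.

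For the matching lower bound, I would combine a moment computation with concentration. Expanding $\text{Trace}(H^{2k})$ as a sum over closed walks $\sigma_{0}\to\sigma_{1}\to\cdots\to\sigma_{2k}=\sigma_{0}$ in the neighboring graph on $X_{+}^{d-1}$, independence of $(Z_{\tau})_{\tau\in K^{d}}$ forces walks in which some $d$-cell is used only once to vanish in expectation, and at the chosen scale of $k$ the surviving tree-like contributions produce $\mathbb{E}[\text{Trace}(H^{2k})]\ge|X_{+}^{d-1}|(2\sqrt{d})^{2k}(1+o(1))$. Since $\text{Trace}(H^{2k})\le|X_{+}^{d-1}|\left\Vert H\right\Vert_{2}^{2k}$, this gives $\big(\mathbb{E}[\left\Vert H\right\Vert_{2}^{2k}]\big)^{1/(2k)}\ge 2\sqrt{d}(1-o(1))$. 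A bounded-differences (McDiarmid) argument on the i.i.d.\ family $(Z_{\tau})$, using that resampling a single $Z_{\tau}$ perturbs $\left\Vert H\right\Vert_{2}$ by at most $O(\left\Vert Z\right\Vert_{\infty}/\sqrt{n\text{Var}(Z)})$, then promotes this $L^{2k}$ bound to $\liminf_{n\to\infty}\mathbb{E}[\left\Vert H\right\Vert_{2}]\ge 2\sqrt{d}$.

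The main obstacle is the simultaneous tuning of $k$ in the upper bound: one needs $k\gg\log n$ to make $\theta_{k}\to 1$ and $A^{(d-1)/k}\to 1$, yet $k\ll n\text{Var}(Z)$ to force $\theta_{k}^{*}\sqrt{k}\to 0$. The hypothesis $n\text{Var}(Z)\gg\log n$ is exactly the threshold at which such an intermediate $k$ exists, matching the optimal Erd\H{o}s--R\'enyi regime of \cite{MR4116720,LHY18}.
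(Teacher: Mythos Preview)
Your upper bound is correct and is exactly the paper's argument: the same inequality $\mathbb{E}[\|H\|_2]\le(\mathbb{E}[\|H\|_{S_{2k}}^{2k}])^{1/(2k)}\le\Phi(\theta_k,\theta_k^*)$, the same choice $k(n)=\lceil\sqrt{n\mathrm{Var}(Z)\log n}\,\rceil$, and the same termwise verification that $\Phi(\theta_k,\theta_k^*)\to 2\sqrt d$.

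The lower bound, however, has a genuine gap in the concentration step. The paper does not go through moments at all here: it simply invokes the almost-sure bound $\liminf_n\|H\|_2\ge 2\sqrt d$ from \cite[Remark~5.2]{RK17} (a consequence of the semicircle law for $H$) and applies Fatou's lemma. Your alternative route --- moment lower bound plus concentration --- could be made to work, but not with McDiarmid. Bounded differences gives a sub-Gaussian tail with variance proxy $\sum_{\tau\in K^d}c_\tau^2$, and with $c_\tau=O\big(\|Z\|_\infty/\sqrt{n\mathrm{Var}(Z)}\big)$ and $|K^d|=\binom{n}{d+1}$ one gets $\sum_\tau c_\tau^2=O\big(n^{d}/\mathrm{Var}(Z)\big)$, which is useless. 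What is needed is Talagrand's convex-Lipschitz inequality (as in Lemma~\ref{lem:technical_lemma_on_Y} and the proof of Corollary~\ref{cor:Norm_of_H_2}): $\|H\|_2$ is convex and $O\big((n\mathrm{Var}(Z))^{-1/2}\big)$-Lipschitz with respect to the \emph{Euclidean} norm on $(Z_\tau)_{\tau\in K^d}$, which yields the correct $e^{-c\,n\mathrm{Var}(Z)\,t^2}$ tail and then indeed allows one to pass from $(\mathbb{E}[\|H\|_2^{2k}])^{1/(2k)}$ to $\mathbb{E}[\|H\|_2]$ up to an $o(1)$ error. A secondary issue is that your trace lower bound $\mathbb{E}[\mathrm{Trace}(H^{2k})]\ge|X_+^{d-1}|(2\sqrt d)^{2k}(1+o(1))$ for $k$ growing with $n$ is asserted rather than proved; because the signed entries of $H$ can make non-tree contributions negative, this is not automatic, whereas the paper's route via the a.s.\ bound and Fatou sidesteps the issue entirely.
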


Furthermore, it provides an upper bound on the probability that the
operator norm of $H$ is bigger than $2\sqrt{d}$.
\begin{cor}
\label{cor:Norm_of_H_2} For every $d\in\mathbb{N}$, there exists
a constant $C_{d}\in(0,\infty)$, depending only on $d$, such that
if $n\text{Var}\left(Z\right)\gg\log\left(n\right)$, then for all
$\varepsilon>0$ and all large enough $n$ (depending only on $\varepsilon$
and $d$)
\[
\mathbb{P}\left(\left\Vert H\right\Vert _{2}\geq2\sqrt{d}+\varepsilon\right)\leq e^{-C_{d}n\text{Var}\left(Z\right)\varepsilon^{2}},
\]
and hence 
\[
\lim_{n\to\infty}\left\Vert H\right\Vert _{2}=2\sqrt{d},\qquad\mathbb{P}\text{-a.s.}
\]
\end{cor}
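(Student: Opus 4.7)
The plan is to invoke Theorem~\ref{thm:main_result} via Markov's inequality, optimize over $k$, and then deduce the almost-sure convergence through Borel--Cantelli combined with the mean-convergence result of Corollary~\ref{cor:Norm_of_H}. First I would observe that, since $H$ is self-adjoint, $\|H\|_{2}^{2k}\le\text{Trace}(|H|^{2k})=\|H\|_{S_{2k}}^{2k}$ for every integer $k\ge d$, whence
\[
\mathbb{P}(\|H\|_{2}\ge2\sqrt{d}+\varepsilon)\le\frac{\mathbb{E}[\|H\|_{S_{2k}}^{2k}]}{(2\sqrt{d}+\varepsilon)^{2k}}\le\left(\frac{\Phi(\theta_{k},\theta_{k}^{*})}{2\sqrt{d}+\varepsilon}\right)^{2k}.
\]
All that remains is to choose $k=k(n,\varepsilon)$ making $\Phi(\theta_{k},\theta_{k}^{*})$ only slightly larger than $2\sqrt{d}$ while keeping $2k$ proportional to $n\text{Var}(Z)\varepsilon^{2}$.

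The next step is to understand $\Phi(\theta_{k},\theta_{k}^{*})$ asymptotically. Set $u:=\theta_{k}/\theta_{k}^{*}+2\sqrt{k}$. In the window $\log n\ll k\ll n\text{Var}(Z)$ one has (i) $\theta_{k}=\sqrt{(n-d)/n}\binom{n}{d}^{1/(2k)}=1+o(1)$, since $\binom{n}{d}^{1/(2k)}=\exp(O(d\log n/k))$; (ii) $\theta_{k}^{*}=(1+o(1))\|Z-\mathbb{E}Z\|_{\infty}/\sqrt{n\text{Var}(Z)}=o(1)$; (iii) $u\asymp\sqrt{n\text{Var}(Z)}$, so $\theta_{k}^{*}u=\theta_{k}+2\sqrt{k}\,\theta_{k}^{*}=1+O(\sqrt{k/(n\text{Var}(Z))})$; (iv) the prefactors $(d!d)^{1/(2k)}$ and $u^{(d-1)/k}$ tend to $1$; and (v) the lower-order terms $\theta_{k}^{*}u^{2/3}(\log u)^{2/3}$ and $\theta_{k}^{*}\sqrt{k}$ are $o(1)$. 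Collecting these observations one arrives at
\[
\Phi(\theta_{k},\theta_{k}^{*})\le 2\sqrt{d}\bigl(1+O(\sqrt{k/(n\text{Var}(Z))})\bigr)+o(1).
\]

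I would then take $k=\lfloor c_{d}n\text{Var}(Z)\varepsilon^{2}\rfloor$ with $c_{d}$ a sufficiently small constant depending only on $d$. The hypothesis $n\text{Var}(Z)\gg\log n$ guarantees $k\gg\log n$ for $n$ large (depending on $\varepsilon$ and $d$), while the scaling gives $\sqrt{k/(n\text{Var}(Z))}\le\varepsilon$, so that $\Phi(\theta_{k},\theta_{k}^{*})\le 2\sqrt{d}+\varepsilon/2$. The Markov bound then reads
\[
\mathbb{P}(\|H\|_{2}\ge2\sqrt{d}+\varepsilon)\le\left(\frac{2\sqrt{d}+\varepsilon/2}{2\sqrt{d}+\varepsilon}\right)^{2k}\le\exp\!\left(-\frac{\varepsilon k}{2\sqrt{d}+\varepsilon}\right)\le\exp(-C_{d}n\text{Var}(Z)\varepsilon^{2}),
\]
after absorbing constants into $C_{d}$. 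The delicate point I expect to wrestle with is squeezing out a genuinely quadratic (rather than merely cubic) dependence on $\varepsilon$ with a constant $C_{d}$ independent of $\varepsilon$; a clean route is to feed $\mathbb{E}[\|H\|_{2}]\to 2\sqrt{d}$ from Corollary~\ref{cor:Norm_of_H} into a Talagrand-type convex concentration inequality, regarding $\|H\|_{2}$ as a convex $1$-Lipschitz function of the independent bounded variables $(Z_{\tau})_{\tau\in K^{d}}$.

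For the almost-sure statement, the tail bound is $o(n^{-M})$ for every $M>0$ under $n\text{Var}(Z)\gg\log n$, hence summable in $n$. Applying Borel--Cantelli with $\varepsilon$ running over a countable sequence decreasing to $0$ immediately gives $\limsup_{n}\|H\|_{2}\le 2\sqrt{d}$ almost surely. The matching lower bound follows by combining this a.s.\ upper bound with the mean convergence $\mathbb{E}[\|H\|_{2}]\to 2\sqrt{d}$ of Corollary~\ref{cor:Norm_of_H} and the uniform integrability of $\|H\|_{2}$ (which is immediate from the exponential tail bound): a reverse-Fatou argument, or alternatively the symmetric lower tail coming from the Talagrand route just mentioned, then yields $\lim_{n}\|H\|_{2}=2\sqrt{d}$ almost surely.
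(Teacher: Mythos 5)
Your proposal correctly identifies the two ingredients the paper uses, but the route you develop in most detail is the wrong one, and one of your suggested repairs for the almost-sure lower bound does not work. Let me take these in turn.

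The Markov-plus-optimization chain cannot deliver the stated exponent. If you set $k=\lfloor c_{d}n\mathrm{Var}(Z)\varepsilon^{2}\rfloor$ to make $\Phi(\theta_{k},\theta_{k}^{*})\le2\sqrt{d}+\varepsilon/2$, then $\bigl(\tfrac{2\sqrt{d}+\varepsilon/2}{2\sqrt{d}+\varepsilon}\bigr)^{2k}\le\exp\bigl(-\tfrac{\varepsilon k}{2\sqrt{d}+\varepsilon}\bigr)\approx\exp\bigl(-\tfrac{c_{d}}{2\sqrt{d}+\varepsilon}\,n\mathrm{Var}(Z)\varepsilon^{3}\bigr)$, which is cubic in $\varepsilon$, not quadratic. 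You do flag this, and the repair you suggest --- Talagrand's convex concentration inequality applied to a $1$-Lipschitz convex function of the i.i.d.\ bounded entries $(Z_{\tau})_{\tau\in K^{d}}$, combined with $\limsup_{n}\mathbb{E}[\|H\|_{2}]\le2\sqrt{d}$ --- is exactly the paper's mechanism. The only cosmetic difference is that the paper applies Talagrand to $\|H\|_{S_{2k_{0}}}$ (for $k_{0}=\lceil\sqrt{n\mathrm{Var}(Z)\log n}\rceil$) so that its expectation is controlled directly by Theorem~\ref{thm:main_result} via Jensen, and then uses $\|H\|_{2}\le\|H\|_{S_{2k_{0}}}$; applying Talagrand directly to $\|H\|_{2}$ and feeding in Corollary~\ref{cor:Norm_of_H} is an equally valid route. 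So the tail bound is fine once you drop the Markov attempt and go straight to Talagrand.

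For the almost-sure convergence, the ``reverse-Fatou'' argument is a genuine error: a.s.\ upper bound $\limsup_{n}X_{n}\le L$ together with $\mathbb{E}[X_{n}]\to L$ and uniform integrability does \emph{not} imply $\liminf_{n}X_{n}\ge L$ a.s.\ (take $X_{n}$ independent, equal to $L$ with probability $1-1/n$ and to $L-1$ with probability $1/n$; all three hypotheses hold yet $\liminf X_{n}\le L-1$ a.s.\ by Borel--Cantelli). What actually closes the argument in the paper is the a.s.\ lower bound $\liminf_{n}\|H\|_{2}\ge2\sqrt{d}$ imported from \cite[Remark~5.2]{RK17} (the same external input already used in Corollary~\ref{cor:Norm_of_H}); your alternative of a symmetric Talagrand lower tail around the mean, combined with Borel--Cantelli, would also work, but you should state that explicitly and discard the reverse-Fatou claim.
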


Finally, following the argument in \cite[Theorem 2.1 and Corollary 2.3]{RK17}
and using Corollary \ref{cor:Norm_of_H_2} allow us to extend the
spectral gap result obtained in \cite{RK17} for the matrix $A$ in
the regime $nq\gg\log^{4}(n)$, to the regime $nq\gg\log(n)$.
\begin{thm}[Eigenvalue confinement]
\label{thm:eigenvalue_confinement} For every $d\geq2$, there exists
a positive constant $C>0$ depending only on $d$, such that the following
holds with probability at least $1-n^{-D}$ for all $D>0$, provided
$nq\gg\log\left(n\right)$.
\begin{enumerate}
\item For every $\xi>0$, and all large enough $n$ (depending on $D$,
$\xi$ and $d$), the ${n-1 \choose d}$ smallest eigenvalues of the
matrix $A$ are within the interval $\sqrt{dnq}\left[-2-\xi,2+\xi\right]$
.
\item If $q\log^{6}\left(n\right)\leq\frac{1}{C\left(1+D\right)^{6}}$,
then for all large enough $n$ (depending on $D$ and $d$), the remaining
${n-1 \choose d-1}$ eigenvalues of $A$ lie in the interval $nq+\left[-7d,7d\right]$.
\end{enumerate}
\end{thm}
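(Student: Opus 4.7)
The plan is to reproduce the reasoning of \cite[Theorem 2.1 and Corollary 2.3]{RK17} with a single substitution: the operator norm bound on $\mathcal{A}$, which there required $nq\gg\log^{4}n$, is now available in the full regime $nq\gg\log n$ via Corollary~\ref{cor:Norm_of_H_2}. The starting point is the decomposition $A=\mathbb{E}[A]+\sqrt{nq}\,\mathcal{A}$, together with the fact, computed in \cite{RK17}, that the spectrum of $\mathbb{E}[A]$ has exactly two distinct eigenvalues: a large one (of order $np$) on a subspace $U$ of dimension $\binom{n-1}{d-1}$ (the image of a coboundary-type operator on the complete complex), and a small one, of order $O(p)$, on its orthogonal complement $U^{\perp}$ of dimension $\binom{n-1}{d}$.

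For part~(1), Weyl's inequality combined with Corollary~\ref{cor:Norm_of_H_2} bounds the $\binom{n-1}{d}$ smallest eigenvalues of $A$ by the small eigenvalue of $\mathbb{E}[A]$ shifted by $\pm(2\sqrt{d}+\xi')\sqrt{nq}$ for arbitrary $\xi'>0$. The exponential tail in Corollary~\ref{cor:Norm_of_H_2}, together with $nq\gg\log n$, yields $e^{-C_{d}nq\xi'^{2}}\leq n^{-D}$ for all large $n$, and since the $O(p)$ centering is $o(\sqrt{dnq})$, the enclosing interval sits inside $\sqrt{dnq}[-2-\xi,2+\xi]$ for any $\xi>0$, as required.

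For part~(2), I would follow \cite[Corollary~2.3]{RK17} and study $A$ in block form relative to the orthogonal splitting $U\oplus U^{\perp}$. The top block $P_{U}AP_{U}$ is handled by identifying it, up to a deterministic shift equal to $nq$, with a diagonal \emph{link-degree} fluctuation: for each $(d-1)$-cell $\sigma$, a sum $\sum_{\tau\supset\sigma}(\mathbf{1}_{\tau\in X}-p)$ over the $n-d$ independent Bernoullis attached to the $d$-cells containing $\sigma$. A Bernstein/Chernoff bound, valid under the assumption $q\log^{6}n\leq1/(C(1+D)^{6})$, converts these tails into $O(d)$ deviations uniformly over the $\binom{n-1}{d-1}$ cells with probability at least $1-n^{-D}$. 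The off-diagonal blocks $P_{U}AP_{U^{\perp}}$ are controlled by $\sqrt{nq}\,\|\mathcal{A}\|_{2}\leq\sqrt{nq}\,(2\sqrt{d}+o(1))$ from Corollary~\ref{cor:Norm_of_H_2}, and produce only a negligible $O(1)$ correction to the top eigenvalues because of the large spectral gap $\asymp np$ between the two eigenspaces of $\mathbb{E}[A]$ (a Schur-complement perturbation estimate).

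The main obstacle is part~(2): the sharp $O(1)$ accuracy for the top eigenvalues cannot be read off from Corollary~\ref{cor:Norm_of_H_2} alone, which yields only $O(\sqrt{nq})$ precision for $\|A-\mathbb{E}[A]\|$. The extra hypothesis $q\log^{6}n\leq1/(C(1+D)^{6})$ is precisely what \cite{RK17} exploits to convert the $n-d$ Bernoullis in each link into $O(d)$ tails with probability $1-n^{-D}$ after a union bound over the $\binom{n-1}{d-1}$ cells. Crucially, no other step in \cite{RK17} relies on the strengthened $nq\gg\log^{4}n$ hypothesis, so substituting the single norm bound there by Corollary~\ref{cor:Norm_of_H_2} immediately upgrades the result to the weaker condition $nq\gg\log n$.
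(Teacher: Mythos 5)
Your overall strategy matches the paper's: both plug the improved norm bound (Corollary~\ref{cor:Norm_of_H_2}) into the machinery of \cite{RK17}, noting that the restriction $nq\gg\log^{4}n$ there entered \emph{only} through the tail bound on $\|\mathcal{A}\|_{2}$. The paper executes this cleanly by observing that \cite[Theorem 6.1]{RK17} remains valid if $\mathcal{E}(\xi)$ is replaced by any upper bound on $\mathbb{P}(\|H\|_{2}>2\sqrt{d}+\xi)$; substituting $e^{-\frac14\beta_{d}nq\xi^{2}}$ yields an intermediate Corollary~5.1, after which only parameter bookkeeping remains (choosing $\xi'=\xi_{D}=C'(D+1)$, checking $\mathscr{E}(\xi_{D})\le\tfrac12 n^{-D}$, and bounding $\Gamma(\xi_{0},\xi_{D},n)\le 7d$ under $q\log^{6}n\le 1/(C(1+D)^{6})$).

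One point in your sketch of part~(2) would not survive being filled in. You claim that a Bernstein/Chernoff bound on the centered link degrees $\sum_{\tau\supset\sigma}(\mathbf{1}_{\tau\in X}-p)$, together with a union bound over the $\binom{n-1}{d-1}$ cells, yields $O(d)$ deviations with probability $1-n^{-D}$ under the hypothesis $q\log^{6}n\le 1/(C(1+D)^{6})$. This is not what that hypothesis buys. Each link degree is $\mathrm{Bin}(n-d,p)$ with variance $\approx nq$, so Bernstein at confidence $1-n^{-D-d}$ gives deviations of order $\sqrt{(D+d)\,nq\log n}$, which under $nq\gg\log n$ is always $\gg d$ and typically $\gg\sqrt{n}/\mathrm{polylog}(n)$. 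The extra hypothesis on $q$ is not used to sharpen this concentration; rather, in \cite{RK17} it is used to control the additive error term $\approx d^{7/2}(d+\xi')^{3}\sqrt{q}\log^{3}n$ in their $\Gamma(\xi,\xi',n)$, which comes from a more delicate spectral perturbation/bootstrap argument, not from naive link-degree concentration. Your proposal survives this imprecision only because you defer to \cite[Corollary 2.3]{RK17} for the actual argument, which is the same move the paper makes; but if you intended the Bernstein sketch to stand on its own, it would fail to reproduce the stated $O(d)$ window. The meta-observation you emphasize — that no other step in \cite{RK17} uses $nq\gg\log^{4}n$ — is the correct and essential point, and is exactly what the paper relies on.
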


As an immediate corollary from the last theorem we obtain 
\begin{cor}[Spectral gap]
\label{cor:Spectral_gap} For every $d\geq2$, there exists a positive
constant $C>0$ depending only on $d$ such that for all $\xi>0$,
$D>0$ satisfying $nq\gg\log\left(n\right)$ and $q\log^{6}\left(n\right)\leq\frac{1}{C\left(1+D\right)^{6}}$,
we have for all $n$ large enough (depending on $d$, $D$ and $\xi$)
\[
\lambda_{{n-1 \choose d}+1}-\lambda_{{n-1 \choose d}}=nq-2\sqrt{dnq}\left(1+O\left(\xi\right)\right),
\]
with probability at least $1-n^{-D}$.
\end{cor}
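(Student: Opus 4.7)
The plan is to deduce Corollary \ref{cor:Spectral_gap} directly from Theorem \ref{thm:eigenvalue_confinement}, supplementing it with the norm convergence from Corollary \ref{cor:Norm_of_H_2} in order to obtain a matching lower bound on the top eigenvalue of the ``small'' spectral cluster.

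Fix $d\geq 2$ and $\xi,D>0$ satisfying the stated hypotheses. On the probability-$(1-n^{-D})$ event on which Theorem \ref{thm:eigenvalue_confinement} holds, the spectrum of $A$ splits into a small cluster of ${n-1 \choose d}$ eigenvalues in $\sqrt{dnq}[-2-\xi,2+\xi]$ and a large cluster of ${n-1 \choose d-1}$ eigenvalues in $nq+[-7d,7d]$. Since $nq\gg\log n$ forces $nq\to\infty$, for all $n$ large enough (depending only on $d$ and $\xi$) one has $nq-7d>(2+\xi)\sqrt{dnq}$, so the two clusters are disjoint and ordered. In the ordering $\lambda_{1}\leq\cdots\leq\lambda_{{n \choose d}}$, the first ${n-1 \choose d}$ eigenvalues are therefore exactly the small cluster and the remaining ${n-1 \choose d-1}$ are the large cluster; in particular $\lambda_{{n-1 \choose d}}$ is the top of the small cluster and $\lambda_{{n-1 \choose d}+1}$ is the bottom of the large cluster.

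The lower bound on the gap follows by direct substitution: from $\lambda_{{n-1 \choose d}+1}\geq nq-7d$ and $\lambda_{{n-1 \choose d}}\leq(2+\xi)\sqrt{dnq}$ one obtains $\lambda_{{n-1 \choose d}+1}-\lambda_{{n-1 \choose d}}\geq nq-2\sqrt{dnq}-\xi\sqrt{dnq}-7d$, and since $7d/\sqrt{dnq}\to 0$ the additive constant is absorbed into the $\xi$ term for all large $n$, yielding $\geq nq-2\sqrt{dnq}(1+O(\xi))$. For the matching upper bound on the gap one needs the reverse inequality $\lambda_{{n-1 \choose d}}\geq 2\sqrt{dnq}(1-O(\xi))$, which is not provided by Theorem \ref{thm:eigenvalue_confinement}(1). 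I would supply it by applying Corollary \ref{cor:Norm_of_H_2} to $H=\mathcal{A}$ (taking $Z\sim\mathrm{Bernoulli}(p)$), which yields $\|\mathcal{A}\|_{2}\to 2\sqrt{d}$ almost surely, combined with the block decomposition of $A$ into invariant coboundary/cycle subspaces used in \cite{RK17}: on the subspace complementary to $\mathrm{range}\,\mathbb{E}[A]$ the symmetric operators $A$ and $A-\mathbb{E}[A]$ agree, so the small cluster of $A$ is precisely $\sqrt{nq}$ times the spectrum of $\mathcal{A}$ restricted to that block. The semicircle-type symmetry of this bulk spectrum (via the weak convergence of the empirical spectral distribution mentioned in the introduction) upgrades $\|\mathcal{A}\|_{2}\to 2\sqrt{d}$ to $\lambda_{\max}(\mathcal{A})\to 2\sqrt{d}$, giving the required lower bound on $\lambda_{{n-1 \choose d}}$; combined with $\lambda_{{n-1 \choose d}+1}\leq nq+7d$, this closes the upper bound on the gap.

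The main obstacle is precisely this upper-bound direction. Theorem \ref{thm:eigenvalue_confinement} alone pins the small cluster down only from the outside in absolute value, so one must invoke Corollary \ref{cor:Norm_of_H_2} and argue, via the block structure of $A$, that the operator norm of $\mathcal{A}$ is attained up to $o(1)$ by the largest positive eigenvalue of the small cluster, rather than by the most negative one or by an eigenvalue in the large cluster (which in fact contributes only $O((nq)^{-1/2})$ to the spectrum of $\mathcal{A}$).
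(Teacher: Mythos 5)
The paper offers no explicit proof here; Corollary~\ref{cor:Spectral_gap} is simply asserted to be ``an immediate corollary'' of Theorem~\ref{thm:eigenvalue_confinement}, so there is no published argument to compare against. You are therefore on your own to fill in the deduction, and your diagnosis of where the difficulty lies is correct: Theorem~\ref{thm:eigenvalue_confinement}(1) only localizes the small cluster to $\sqrt{dnq}[-2-\xi,2+\xi]$, which gives the upper bound $\lambda_{{n-1\choose d}}\le (2+\xi)\sqrt{dnq}$ but says nothing about whether $\lambda_{{n-1\choose d}}$ is anywhere near $2\sqrt{dnq}$ rather than, say, $0$ or $-2\sqrt{dnq}$. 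Combined with $\lambda_{{n-1\choose d}+1}\ge nq-7d$ this yields the one-sided bound $\lambda_{{n-1\choose d}+1}-\lambda_{{n-1\choose d}}\ge nq-2\sqrt{dnq}(1+O(\xi))$ for $n$ large, but the claimed equality needs the matching lower bound $\lambda_{{n-1\choose d}}\ge (2-O(\xi))\sqrt{dnq}$, which the theorem does not supply. So the corollary is not ``immediate'' and a further ingredient is needed, exactly as you say.

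Your proposed fix, however, is not correct as written. The assertion that ``on the subspace complementary to $\mathrm{range}\,\mathbb{E}[A]$ the symmetric operators $A$ and $A-\mathbb{E}[A]$ agree, so the small cluster of $A$ is precisely $\sqrt{nq}$ times the spectrum of $\mathcal{A}$ restricted to that block'' does not hold: $\mathbb{E}[A]$ is not low rank (for $d=1$ it is $p(J-I)$, which is invertible, so the ``complement of its range'' is trivial), and even the genuine eigendecomposition $\mathbb{E}[A]=p(n-d)\,P_{\mathrm{cobdy}}-pd\,(I-P_{\mathrm{cobdy}})$ does not yield invariant subspaces for the random $A$, since $A$ does not commute with $P_{\mathrm{cobdy}}$. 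The correct tool is eigenvalue interlacing for a rank-$r$ PSD perturbation: writing $A+pdI=\sqrt{nq}\,\mathcal{A}+(\mathbb{E}[A]+pdI)$ and observing $\mathbb{E}[A]+pdI\succeq 0$ of rank $r={n-1\choose d-1}$, Weyl's inequality gives $\lambda_{{n-1\choose d}}(A)\ge\sqrt{nq}\,\mu_{r+1}(\mathcal{A})-pd$, where $\mu_{r+1}$ denotes the $(r+1)$-st largest eigenvalue. Reducing the problem to a lower bound on $\mu_{r+1}(\mathcal{A})$ is then the step that actually requires a new input. Two further soft spots in your sketch: Corollary~\ref{cor:Norm_of_H_2} gives $\|\mathcal{A}\|_2\to 2\sqrt{d}$, i.e.\ $\max(\mu_1(\mathcal{A}),-\mu_N(\mathcal{A}))\to 2\sqrt{d}$, which by itself does not force $\mu_1(\mathcal{A})$ (let alone $\mu_{r+1}(\mathcal{A})$) to approach $2\sqrt{d}$; you appeal to weak convergence of the ESD to close that gap, but that convergence (from [RK17, Theorem 3.1]) is only in probability and is not quantified in this paper at the $1-n^{-D}$ level that the statement requires. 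A complete proof thus needs either a quantitative lower-tail estimate on $\mu_{r+1}(\mathcal{A})$ (e.g.\ via the concentration of a suitable linear spectral statistic at exponential rate) or a direct moment/trace argument, neither of which is present in your outline.
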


\paragraph{Conventions. \textmd{Throughout the rest of the paper we use $C$
to denote a generic large positive constant, which may depend on some
fixed parameters and whose value may change from one expression to
the next. If $C$ depends on some parameter $k$, we sometimes emphasize
this dependence by writing $C_{k}$ instead of $C$. The letters $d,i,j,k,l,m,n,r,s,N$
are always used to denote an element in $\mathbb{N}:=\{0,1,2,\ldots\}$.
From now on, we consistently use $\sigma$ for (oriented or non-oriented)
$(d-1)$-cells, and $\tau$ for (oriented or non-oriented) $d$-cells.}}

\section{\label{sec:Norm-bounds-for}Norm bounds for the unsigned adjacency
matrix }

In this section we introduce two useful bounds, having a significant
role in the proof of Theorem \ref{thm:main_result}. 

Let $r\geq d+1$ and $p_{0}\in(0,1)$ a fixed number which does not
depend on $r$. Denote by $Y_{{r \choose d}\times{r \choose d}}$
the matrix obtained from \eqref{eq:The_matrix_tilde_H} by taking $\text{Ber}(p_{0})$
distribution, that is, $Y$ is the unoriented normalized adjacency
matrix arising from $X(d,r,p_{0})$, given by 
\[
Y_{\sigma\sigma'}=\begin{cases}
\frac{\chi_{\sigma\cup\sigma'}-p_{0}}{\sqrt{q_{0}}} & \text{if \ensuremath{\sigma\cup\sigma'\in X^{d}}}\\
0 & \text{otherwise}
\end{cases},\qquad\forall\sigma,\sigma'\in X_{+}^{d-1},
\]
where $q_{0}:=p_{0}\left(1-p_{0}\right)$ and $\left(\chi_{\tau}\right)_{\tau\in X^{d}}$
are i.i.d. Ber$(p_{0})$ random variables. With a slight abuse of
notation, we use $\left\Vert \cdot\right\Vert _{2}$ to denote both
the Euclidean norm when applied to a vector $\mathbf{v}\in\mathbb{R}^{m}$,
and the operator norm when applied to a real matrix.

\subsection{Bounding the expected value of the norm of $Y$}
\begin{prop}
\label{prop:first_Bound_on_Y}For every $p_{0}\in(0,1)$ there exists
a constant $C=C_{d,q_{0}}\in(0,\infty)$ such that for all $r\geq d+1$
\[
\mathbb{E}\left[\left\Vert Y\right\Vert _{2}\right]\leq2\sqrt{dr}+C_{d,q_{0}}r^{1/3}\log^{2/3}r.
\]
\end{prop}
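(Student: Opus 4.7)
The plan is a Schatten-type moment bound adapted to the simplicial setting and optimized over the moment order, in the spirit of the Sodin--Soshnikov argument for Wigner matrices. Since $Y$ is self-adjoint,
\[
\mathbb{E}[\|Y\|_2] \leq \bigl(\mathbb{E}[\mathrm{Tr}(Y^{2k})]\bigr)^{1/(2k)}
\]
by Jensen's inequality, so the task reduces to estimating $\mathbb{E}[\mathrm{Tr}(Y^{2k})]$ sharply and choosing the integer $k = k(r)$ optimally. I expect the optimizer to be $k \asymp (r/\log r)^{1/3}$, which is precisely the scale producing the $r^{1/3}\log^{2/3}r$ correction in the statement.

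Expanding the trace,
\[
\mathrm{Tr}(Y^{2k}) = \sum_{\sigma_0,\ldots,\sigma_{2k-1} \in K^{d-1}} \prod_{i=0}^{2k-1} Y_{\sigma_i\sigma_{i+1}} \quad (\text{indices mod } 2k),
\]
each summand is indexed by a closed walk of length $2k$ in the graph on $K^{d-1}$ whose edges correspond to $d$-cells of the complete complex $K(d,r)$. Writing $Y_{\sigma_i\sigma_{i+1}} = q_0^{-1/2}(\chi_{\tau_i} - p_0)$ with $\tau_i := \sigma_i \cup \sigma_{i+1}$, independence of the $\chi_\tau$'s together with their centered form forces only walks in which every traversed $d$-cell is used at least twice to contribute after taking expectations. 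Boundedness of $\chi_\tau - p_0$ then bounds each surviving term by a constant depending only on $d$ and $q_0$, raised to the power $k$.

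The main obstacle, and the reason the case $d \geq 2$ does not reduce to classical Wigner combinatorics, is the counting of these walks. Each $d$-cell $\tau$ contains $d+1$ distinct $(d-1)$-subcells, so the walk can transition inside the same $d$-cell in several inequivalent ways, and the sub-$d$-complex $S_w$ of $K^d$ spanned by the walk's $d$-cells interacts with the walk non-locally. I would group walks by the isomorphism class of the pair $(S_w, \text{walk type})$, bound the number of embeddings of $S_w$ into $K(d,r)$ by $r^{|S_w^0|}$ with $|S_w^0| \leq d + k - \mathrm{excess}$, and count tree-type walks by the Catalan numbers, gaining a damping factor $(C_d k^3/r)^{\mathrm{excess}}$ for higher-excess types. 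This should yield
\[
\mathbb{E}[\mathrm{Tr}(Y^{2k})] \leq \binom{r}{d}(2\sqrt{dr})^{2k}\sum_{\ell \geq 0}\bigl(C_{d,q_0}\,k^3/r\bigr)^\ell,
\]
which converges for $k^3 = o(r)$.

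After taking the $2k$-th root, the factor $\binom{r}{d}^{1/(2k)} = \exp(O_d(\log r/k))$ combines with a multiplicative error of size $1 + Ck^2/r$ from the excess series, and minimizing in $k$ at $k \asymp (r/\log r)^{1/3}$ produces the claimed bound. The hardest step will be the simplicial combinatorics underlying the excess classification, since in contrast to the graph case the cancellation between ``new'' and ``already visited'' $(d-1)$-subcells of a common $d$-cell is genuinely non-local along the walk; this is precisely the phenomenon alluded to in the introduction, and the argument will rely on the structural properties of such walks captured by Lemma \ref{lem:Lemma1} and Lemma \ref{Lem:lemma_2}.
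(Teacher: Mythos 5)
The paper proves Proposition \ref{prop:first_Bound_on_Y} by a much shorter route than what you propose: it directly integrates the tail bound
\[
\mathbb{P}\bigl(\|Y\|_{2}>2\sqrt{dr}+u\sqrt{r}\bigr)\le\frac{2(1+\tfrac{u}{2\sqrt d})^{2}}{(d-1)!}\exp\Bigl(d\log r-\bigl(\tfrac{2}{3}\log(1+\tfrac{u}{2\sqrt d})\bigr)^{3/2}(rq_{0}/d)^{1/4}\Bigr),
\]
quoted from \cite[Theorem 5.1]{RK17}, over $u\ge\alpha r^{-1/6}\log^{2/3}r$. The cutoff $r^{-1/6}\log^{2/3}r$ is exactly what makes the exponent turn negative, and multiplying by the scale $\sqrt{r}$ produces the stated $r^{1/3}\log^{2/3}r$ correction. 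There is no fresh moment computation in the paper's argument; all the combinatorics is inherited from RK17.

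Your proposal is a genuinely different approach, and there is a real gap at its core. The entire weight of your sketch rests on the asserted bound
\[
\mathbb{E}[\mathrm{Tr}(Y^{2k})]\le\binom{r}{d}(2\sqrt{dr})^{2k}\sum_{\ell\ge 0}(C_{d,q_{0}}k^{3}/r)^{\ell},
\]
i.e.\ a Füredi--Komlós/Vu--type damping of $(k^{3}/r)$ per excess cycle. You do not prove this, and it cannot be obtained for free. In the simplicial setting the ``edges'' of a walk are crossings of $d$-cells, a $d$-cell contains $d+1$ distinct $(d-1)$-subcells, and cycles in the $(d-1)$-cell graph can revisit a $d$-cell via distinct pairs of faces; this is precisely the non-locality the paper warns about. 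Establishing a clean $(k^{\alpha}/r)^{\ell}$ damping in this generality (with $\alpha$ small enough that the residual correction is dominated by $r^{1/3}\log^{2/3}r$) is a project comparable in difficulty to the content of \cite[Theorem 5.1]{RK17} itself, which is exactly what the paper is reusing rather than re-deriving. You also state both $(Ck^{3}/r)^{\ell}$ and ``a multiplicative error of size $1+Ck^{2}/r$,'' which are not the same thing and reflect that the optimization has not been pinned down.

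Two further cautions. First, Lemma \ref{lem:Lemma1} and Lemma \ref{Lem:lemma_2} cannot play the role you want here: in the paper's logic these lemmas reduce the Schatten-norm expansion of $H$ to a tree expression, which is then compared back to $\mathbb{E}[\|Y\|_{S_{2k}}^{2k}]$, which in turn is controlled by Propositions \ref{prop:first_Bound_on_Y} and \ref{propr:Second_estimation_on_Y}. Using those lemmas to prove Proposition \ref{prop:first_Bound_on_Y} would circle back to the quantity you are trying to bound. Second, if the $(k^{3}/r)$ damping did hold, the optimal $k$ would be $\asymp r^{1/3}$ (not $(r/\log r)^{1/3}$) and the resulting correction would be $\asymp r^{1/6}\log r$, i.e.\ strictly stronger than $r^{1/3}\log^{2/3}r$. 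That is consistent with the statement as an upper bound, but it also shows the exponents $1/3$ and $2/3$ in the proposition do not naturally arise from the moment optimization you describe; they come from the specific $r^{1/4}$ scaling in RK17's tail exponent. The moment route is viable in principle, but as written your proof has a missing core (the excess-damping combinatorics) and a suggested shortcut (Lemmas \ref{lem:Lemma1}, \ref{Lem:lemma_2}) that would not close the argument.
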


\begin{proof}
We first prove the inequality holds for all sufficiently large $r$
(depending only on $d$ and $q_{0}$). Using the CDF formula for calculating
expectation gives for every $\alpha>0$

\begin{alignat}{1}
\mathbb{E}\left[\left\Vert Y\right\Vert _{2}\right] & =\int_{0}^{\infty}\mathbb{P}\left(\left\Vert Y\right\Vert _{2}>t\right)dt\nonumber \\
 & \leq2\sqrt{dr}+\alpha r^{1/3}\log^{2/3}r+\sqrt{r}\int_{\alpha r^{-1/6}\log^{2/3}r}^{\infty}\mathbb{P}\left(\left\Vert Y\right\Vert _{2}>2\sqrt{dr}+u\sqrt{r}\right)du.\label{eq:33}
\end{alignat}
By \cite[Theorem 5.1]{RK17}\footnote{Note that in \cite{RK17} the authors consider the oriented adjacency
matrix of the complex $X\left(d,n,p\right)$ (denoted by $A)$, which
takes into account the orientation of each $\left(d-1\right)-$cell.
However, going over the proof of Theorem $5.1$, one can verify that
it remains valid for the matrix $Y$.}, assuming $rq_{0}\geq2$, for every $u>0$
\begin{align*}
\mathbb{P}\left(\left\Vert Y\right\Vert _{2}>2\sqrt{dr}+u\sqrt{r}\right) & \le\frac{2}{(d-1)!}\xi(d,r,u)\\
 & :=\frac{2\left(1+\frac{u}{2\sqrt{d}}\right)^{2}}{\left(d-1\right)!}\exp\left(d\log\left(r\right)-\left(\frac{2}{3}\log\left(1+\frac{u}{2\sqrt{d}}\right)\right)^{3/2}\left(\frac{rq_{0}}{d}\right)^{1/4}\right),
\end{align*}
and thus 
\begin{alignat}{1}
\mathbb{E}\left[\left\Vert Y\right\Vert _{2}\right]\leq2\sqrt{dr}+\alpha r^{1/3}\log^{2/3}r+ & \frac{2\sqrt{r}}{\left(d-1\right)!}\int_{\alpha r^{-1/6}\log^{2/3}r}^{\infty}\xi(d,r,u)du.\label{eq:26}
\end{alignat}
Given $\varepsilon>0$, by choosing $\alpha>2d^{4/3}q_{0}^{-1/6}\varepsilon^{-2/3}$,
one can verify that for all $u\geq\alpha r^{-1/6}\log^{2/3}r$ and
all sufficiently large $r$ (depending only on $d$ and $q_{0}$ and
the choice of $\alpha$)
\[
\frac{d\log(r)}{\log^{3/2}\left(1+\frac{u}{2\sqrt{d}}\right)\left(\frac{rq_{0}}{d}\right)^{1/4}}\leq\varepsilon.
\]
In particular, taking $\varepsilon=\left(\frac{2}{3}\right)^{3/2}-\left(\frac{1}{3}\right)^{3/2}$
and choosing an appropriate $\alpha$, for all sufficiently large
$r$ (depending only on $d$ and $q_{0})$ 
\[
\xi(d,r,u)\leq\left(1+\frac{u}{2\sqrt{d}}\right)^{2}\exp\left(-\left(\frac{1}{3}\left(\frac{rq_{0}}{d}\right)^{1/6}\log\left(1+\frac{u}{2\sqrt{d}}\right)\right)^{3/2}\right).
\]
Consequently, for all $r$ satisfying $\frac{1}{3^{3/2}}\left(\frac{rq_{0}}{d}\right)^{1/4}\geq1$
\begin{align*}
\int_{\alpha r^{-1/6}\log^{2/3}r}^{\infty}\xi(d,r,u)du\leq & \int_{1+\frac{1}{2}\alpha d^{-1/2}r^{-1/6}\log^{2/3}r}^{\infty}s^{2}\exp\left(-\left(\frac{1}{3}\left(\frac{rq_{0}}{d}\right)^{1/6}\log s\right)^{3/2}\right)ds\\
\leq & \int_{1+\frac{1}{2}\alpha d^{-1/2}r^{-1/6}\log^{2/3}r}^{\infty}s^{2}\exp\left(-\frac{1}{3}\left(\frac{rq_{0}}{d}\right)^{1/6}\log s\right)ds\\
\leq & \int_{1}^{\infty}s^{2}\exp\left(-\frac{1}{3}\left(\frac{rq_{0}}{d}\right)^{1/6}\log s\right)ds\\
= & \int_{1}^{\infty}s^{2-\frac{1}{3}\left(\frac{rq_{0}}{d}\right)^{1/6}}ds=\frac{1}{\frac{1}{3}\left(\frac{rq_{0}}{d}\right)^{1/6}-3},
\end{align*}
where in the second inequality we used the fact that $\frac{1}{3}\left(\frac{rq_{0}}{d}\right)^{1/6}\log\left(1+\frac{u}{2\sqrt{d}}\right)\geq1$
for all $u\geq1+\frac{1}{2}\alpha d^{-1/2}r^{-1/6}\log^{2/3}r$ provided
$r$ is sufficiently large (depending only on $d$ and $q_{0}$).
Combining the last estimation with \eqref{eq:26} and \eqref{eq:33},
gives 
\[
\mathbb{E}\left[\left\Vert Y\right\Vert _{2}\right]\leq2\sqrt{dr}+\alpha r^{1/3}\log^{2/3}r+\frac{2\sqrt{r}}{\left(d-1\right)!}\frac{1}{\frac{1}{3}\left(\frac{rq_{0}}{d}\right)^{1/6}-3}\leq2\sqrt{dr}+C_{d,q_{0}}r^{1/3}\log^{2/3}r.
\]
In order to obtain the result for all $r\geq d+1$, we note that by
increasing the value of $C_{d,q_{0}}$ even further it follows that
the last inequality holds for any $r\geq d+1$, thus concluding
the proof of Proposition \ref{prop:first_Bound_on_Y}.
\end{proof}

\subsection{Bounding the expected value of powers of the norm of $Y$}
\begin{prop}
\label{propr:Second_estimation_on_Y} For every $p_{0}\in(0,1)$,
there exists a constant $C_{d,q_{0}}\in(0,\infty)$ (depending only
on $d$ and $q_{0}$), such that for all $k\in\mathbb{N}$ and all
$r\geq d+1$

\textbf{
\[
\left(\mathbb{E}\left[\left\Vert Y\right\Vert _{2}^{2k}\right]\right)^{\frac{1}{2k}}\leq\mathbb{E}\left[\left\Vert Y\right\Vert _{2}\right]+C_{d,q_{0}}\sqrt{k}.
\]
}
\end{prop}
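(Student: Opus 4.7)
The plan is to deduce the bound from a concentration inequality for the convex Lipschitz function $F(\chi):=\|Y(\chi)\|_2$, where $\chi=(\chi_\tau)_{\tau\in K^d}\in\{0,1\}^N$ with $N=\binom{r}{d+1}$. Since each entry of $Y$ is an affine function of $\chi$ and the operator norm is convex on matrices, $F$ is convex. To see that $F$ is Lipschitz in the Euclidean metric on $\{0,1\}^N$ with a constant depending only on $d$ and $q_0$, I would write
\[
|F(\chi)-F(\chi')|\leq\|Y(\chi)-Y(\chi')\|_2\leq\|Y(\chi)-Y(\chi')\|_F,
\]
and observe that for each $\tau\in K^d$ there are exactly $d(d+1)$ ordered pairs $(\sigma,\sigma')\in X_+^{d-1}\times X_+^{d-1}$ with $\sigma\cup\sigma'=\tau$, each contributing $(\chi_\tau-\chi'_\tau)^2/q_0$ to $\|Y(\chi)-Y(\chi')\|_F^2$. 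Summing over $\tau$ gives
\[
\|Y(\chi)-Y(\chi')\|_F^2=\frac{d(d+1)}{q_0}\|\chi-\chi'\|_2^2,
\]
so $F$ is $L$-Lipschitz with $L=\sqrt{d(d+1)/q_0}$, crucially \emph{independent of $r$}.

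Next, I would invoke Talagrand's concentration inequality for convex Lipschitz functions of independent $[0,1]$-valued random variables (see, e.g., Boucheron--Lugosi--Massart), which yields sub-Gaussian tails
\[
\mathbb{P}\bigl(|F(\chi)-\mathbb{E}F(\chi)|>t\bigr)\leq C\exp\!\left(-\frac{ct^2}{L^2}\right),\qquad t>0,
\]
for absolute constants $C,c>0$. Integrating this tail via the layer-cake formula,
\[
\mathbb{E}\bigl|F-\mathbb{E}F\bigr|^{2k}=\int_0^\infty 2k\,t^{2k-1}\mathbb{P}\bigl(|F-\mathbb{E}F|>t\bigr)\,dt,
\]
and using $\Gamma(k+1)\leq k^k$, one obtains
\[
\bigl(\mathbb{E}|F-\mathbb{E}F|^{2k}\bigr)^{1/(2k)}\leq C_{d,q_0}\sqrt{k}.
\]

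The proof concludes by the triangle inequality in $L^{2k}$: since $\mathbb{E}\|Y\|_2$ is a constant,
\[
\bigl(\mathbb{E}\|Y\|_2^{2k}\bigr)^{1/(2k)}\leq\mathbb{E}\|Y\|_2+\bigl(\mathbb{E}|F-\mathbb{E}F|^{2k}\bigr)^{1/(2k)}\leq\mathbb{E}\|Y\|_2+C_{d,q_0}\sqrt{k}.
\]

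The one subtle point, and the reason this works at all, is that one cannot use a direct bounded-differences (Azuma--Hoeffding) argument: flipping a single $\chi_\tau$ changes the $(d+1)\times(d+1)$ principal submatrix indexed by $\partial\tau$, so the per-coordinate operator-norm fluctuation is only $O_{d,q_0}(1)$, but with $N=\binom{r}{d+1}$ coordinates Azuma would give a variance proxy of order $r^{d+1}$, which is useless. Exploiting convexity of $\chi\mapsto\|Y(\chi)\|_2$ through Talagrand's inequality is essential here, because it replaces coordinate-wise fluctuations by the global Frobenius Lipschitz constant, which is dimension-free.
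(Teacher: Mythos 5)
Your proposal is correct and follows essentially the same route as the paper: both proofs rest on Talagrand's concentration inequality for the convex Lipschitz function $\chi\mapsto\|Y(\chi)\|_2$ with an $r$-independent Lipschitz constant $\sqrt{d(d+1)/q_0}$, followed by integrating the sub-Gaussian tail to control the $2k$-th moment. The paper isolates the one-sided concentration bound in a separate lemma and then bounds $\mathbb{E}\|Y\|_2^{2k}$ by splitting the CDF integral at $(\mathbb{E}\|Y\|_2)^{2k}$, changing variables, expanding $(\mathbb{E}\|Y\|_2+\eta)^{2k-1}$ binomially, and comparing term-by-term with $(\mathbb{E}\|Y\|_2+C\sqrt{k})^{2k}$; your version streamlines this last step by appealing directly to Minkowski's inequality in $L^{2k}$ and the Gamma-function estimate $\Gamma(k+1)^{1/(2k)}\lesssim\sqrt{k}$, which is a cleaner route to the same endpoint. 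One small technical point worth noting: you invoke a two-sided tail bound $\mathbb{P}(|F-\mathbb{E}F|>t)\lesssim e^{-ct^2/L^2}$, while the paper only needs (and the BLM13 Theorem 6.10 form literally gives) the upper tail; the two-sided bound does hold for convex Lipschitz functions via Talagrand's convex-distance inequality around the median, but if you want to cite BLM13 Theorem 6.10 as the paper does, you could equally avoid the lower tail by mimicking the paper's split of $\mathbb{E}\|Y\|_2^{2k}=\int_0^\infty\mathbb{P}(\|Y\|_2^{2k}>s)\,ds$ at $s=(\mathbb{E}\|Y\|_2)^{2k}$, which reduces everything to the upper tail.
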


The proof of Proposition \ref{propr:Second_estimation_on_Y} is based
on the following concentration result.
\begin{lem}
\label{lem:technical_lemma_on_Y} For every $p_{0}\in(0,1)$, there
exists a constant $c_{d,q_{0}}\in(0,\infty)$, depending only on $d$
and $q_{0}$, such that for any $t>0$ and $r\geq d+1$
\[
\mathbb{P}\left(\left\Vert Y\right\Vert _{2}\geq\mathbb{E}\left[\left\Vert Y\right\Vert _{2}\right]+t\right)\leq e^{-c_{d,q_{0}}t^{2}}.
\]
\end{lem}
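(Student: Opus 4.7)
The plan is to show that $F(\chi) := \|Y(\chi)\|_{2}$, viewed as a function of the independent Bernoulli variables $\chi = (\chi_\tau)_{\tau \in K^{d}}$, is convex and Lipschitz with respect to the Euclidean norm with a Lipschitz constant depending only on $d$ and $q_{0}$ (and, crucially, independent of $r$); the concentration bound will then follow by invoking the standard Talagrand-type concentration inequality for convex Lipschitz functions on the hypercube (as in, e.g., Ledoux--Talagrand or Boucheron--Lugosi--Massart).

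Convexity of $F$ will be immediate, since $\chi \mapsto Y(\chi)$ is affine in $\chi \in [0,1]^{\binom{r}{d+1}}$ and the operator norm is a convex function. For the Euclidean Lipschitz bound I would use the trivial inequality $\|\cdot\|_{2} \leq \|\cdot\|_{F}$ together with a direct Frobenius-norm calculation. The key combinatorial input is that each $d$-cell $\tau$ influences exactly $d(d+1)$ entries of $Y$, namely those indexed by the ordered pairs $(\sigma,\sigma') \in X_{+}^{d-1} \times X_{+}^{d-1}$ with $\sigma \cup \sigma' = \tau$. Hence
\[
\|Y(\chi)-Y(\chi')\|_{F}^{2} \;=\; \frac{d(d+1)}{q_{0}}\sum_{\tau \in K^{d}}(\chi_\tau-\chi_\tau')^{2} \;=\; \frac{d(d+1)}{q_{0}}\|\chi - \chi'\|_{2}^{2},
\]
so $F$ is $L$-Lipschitz in Euclidean norm with $L = \sqrt{d(d+1)/q_{0}}$. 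Applying the convex Lipschitz concentration inequality on $\{0,1\}^{\binom{r}{d+1}}$ then gives $\mathbb{P}(F \geq \mathbb{E}F + t) \leq \exp(-t^{2}/(2L^{2}))$, which is exactly the claimed bound with $c_{d,q_{0}} = q_{0}/(2d(d+1))$.

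The only nontrivial point I anticipate is the recognition that the naive tools produce too weak a bound: a direct McDiarmid argument controls only the Hamming Lipschitz constant of $F$, and since switching one $\chi_\tau$ can move $\|Y\|_{2}$ by up to $d/\sqrt{q_{0}}$, this yields concentration on the scale $\sqrt{\binom{r}{d+1}}$, which grows polynomially in $r$; similarly, matrix Bernstein applied to $Y = \sum_{\tau} \xi_\tau B_\tau$ gives concentration only on scale $\sqrt{r}$. The gain in the argument above comes from working with the Euclidean rather than the Hamming Lipschitz constant of $F$: each coordinate influences only $O_{d}(1)$ matrix entries, so its Frobenius contribution is $O_{d}(1)$, and the convexity of $F$ is precisely what allows Talagrand's inequality to promote this Euclidean Lipschitz bound into truly $r$-independent Gaussian concentration.
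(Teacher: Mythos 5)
Your argument is essentially identical to the paper's: both invoke Talagrand's concentration inequality (BLM13, Theorem~6.10) for convex Lipschitz functions on the cube, using that $\chi\mapsto\|Y(\chi)\|_{2}$ is convex and Euclidean-Lipschitz with constant $\sqrt{d(d+1)/q_{0}}$, arriving at the same $c_{d,q_{0}} = q_{0}/(2d(d+1))$. The only cosmetic difference is that the paper pre-rescales the function to make it $1$-Lipschitz, whereas you keep the Lipschitz constant explicit; you also spell out the combinatorial count of $d(d+1)$ affected entries per $d$-cell, which the paper leaves implicit.
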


\begin{proof}[Proof of Lemma \ref{lem:technical_lemma_on_Y}]
 The inequality follows from Talagrand\textquoteright s concentration
inequality \cite[Theorem 6.10]{BLM13} using the fact that the function
$f_{p_{0},r}:[0,1]^{|K^{d}|}\to\mathbb{R}$, defined by 
\[
f_{p_{0},r}((x_{\tau})_{\tau\in K^{d}})=\sqrt{\frac{q_{0}}{d\left(d+1\right)}}\|A((x_{\tau})_{\tau\in K^{d}})\|_{2},
\]
where $A((x_{\tau})_{\tau\in K^{d}})$ is a $|X_{+}^{d-1}|\times|X_{+}^{d-1}$|
matrix defined by
\[
A_{\sigma\sigma'}\left((x_{\tau})_{\tau\in K^{d}}\right)=\begin{cases}
0 & \text{if \ensuremath{\sigma\cup\sigma'\notin K^{d}} }\\
\frac{x_{\tau}-p_{0}}{\sqrt{p_{0}}} & \text{if \ensuremath{\sigma\cup\sigma'=\tau\in K^{d}} }
\end{cases},\qquad\forall\sigma,\sigma'\in X_{+}^{d-1}
\]
is a convex 1-Lipschitz function, and therefore, for any $t>0$
\begin{alignat}{1}
 & \mathbb{P}\left(\left\Vert Y\right\Vert _{2}\geq\mathbb{E}\left[\left\Vert Y\right\Vert _{2}\right]+t\right)\nonumber \\
= & \mathbb{P}\left(f_{p_{0},r}\left(\left(\chi_{\tau}\right)_{\tau\in K^{d}}\right)\geq\mathbb{E}\left[f_{p_{0},r}\left(\left(\chi_{\tau}\right)_{\tau\in K^{d}}\right)\right]+\sqrt{\frac{q_{0}}{d\left(d+1\right)}}t\right)\leq e^{-c_{d,q_{0}}t^{2}},\label{eq:34}
\end{alignat}
where $c_{d,q_{0}}:=\frac{q_{0}}{2d\left(d+1\right)}$.
\end{proof}
\begin{proof}[Proof of Proposition \ref{propr:Second_estimation_on_Y}]
 We strive towards estimating $\mathbb{E}\Big[\left\Vert Y\right\Vert _{2}^{2k}\Big]$.
Using the CDF formula for expectation gives
\begin{alignat}{1}
\mathbb{E}[\left\Vert Y\right\Vert _{2}^{2k}] & =\int_{0}^{\infty}\mathbb{P}\left(\left\Vert Y\right\Vert _{2}^{2k}>t\right)dt\nonumber \\
 & =\int_{0}^{\left(\mathbb{E}\left[\left\Vert Y\right\Vert _{2}\right]\right)^{2k}}\mathbb{P}\left(\left\Vert Y\right\Vert _{2}^{2k}>t\right)dt+\int_{\left(\mathbb{E}\left[\left\Vert Y\right\Vert _{2}\right]\right)^{2k}}^{\infty}\mathbb{P}\left(\left\Vert Y\right\Vert _{2}^{2k}>t\right)dt\nonumber \\
 & \leq\left(\mathbb{E}\left[\left\Vert Y\right\Vert _{2}\right]\right)^{2k}+2k\int_{0}^{\infty}\left(\mathbb{E}\left[\left\Vert Y\right\Vert _{2}\right]+\eta\right)^{2k-1}e^{-c_{d,q_{0}}\eta^{2}}d\eta,\label{eq:19}
\end{alignat}
where in the second integral we used the change of variable $t=\left(\mathbb{E}\left[\left\Vert Y\right\Vert _{2}\right]+\eta\right)^{2k}$
together with Lemma \ref{lem:technical_lemma_on_Y}. Using the binomial
formula, the integral in \eqref{eq:19} gives
\begin{alignat}{1}
\mathbb{E}[\left\Vert Y\right\Vert _{2}^{2k}] & \leq\left(\mathbb{E}\left[\left\Vert Y\right\Vert _{2}\right]\right)^{2k}+2k\sum_{j=0}^{2k-1}\left(\mathbb{E}\left[\left\Vert Y\right\Vert _{2}\right]\right)^{j}{2k-1 \choose j}\int_{0}^{\infty}\eta^{2k-1-j}e^{-c_{d,q_{0}}\eta^{2}}d\eta\nonumber \\
 & \overset{\left(1\right)}{=}\left(\mathbb{E}\left[\left\Vert Y\right\Vert _{2}\right]\right)^{2k}+\sum_{j=0}^{2k-1}\left(2k-j\right){2k \choose j}\left(\mathbb{E}\left[\left\Vert Y\right\Vert _{2}\right]\right)^{j}\frac{1}{\left(2c_{d,q_{0}}\right)^{\frac{2k-j}{2}}}\int_{0}^{\infty}t^{2k-1-j}e^{-\frac{t^{2}}{2}}dt\nonumber \\
 & \overset{\left(2\right)}{=}\left(\mathbb{E}\left[\left\Vert Y\right\Vert _{2}\right]\right)^{2k}+\sum_{j=0}^{2k-1}\left(2k-j\right){2k \choose j}\left(\mathbb{E}\left[\left\Vert Y\right\Vert _{2}\right]\right)^{j}\frac{1}{\left(2c_{d,q_{0}}\right)^{\frac{2k-j}{2}}}\sqrt{\frac{\pi}{2}}\int_{-\infty}^{\infty}\frac{1}{\sqrt{2\pi}}\left|t\right|^{2k-1-j}e^{-\frac{t^{2}}{2}}dt\nonumber \\
 & \overset{\left(3\right)}{\leq}\left(\mathbb{E}\left[\left\Vert Y\right\Vert _{2}\right]\right)^{2k}+\sum_{j=0}^{2k-1}\left(2k-j\right){2k \choose j}\left(\mathbb{E}\left[\left\Vert Y\right\Vert _{2}\right]\right)^{j}\frac{1}{\left(2c_{d,q_{0}}\right)^{\frac{2k-j}{2}}}\sqrt{\frac{\pi}{2}}\left(2k-1-j\right)!!\nonumber \\
 & \leq\left(\mathbb{E}\left[\left\Vert Y\right\Vert _{2}\right]\right)^{2k}+\sqrt{\frac{\pi}{c_{d,q_{0}}}}k\left(\mathbb{E}\left[\left\Vert Y\right\Vert _{2}\right]\right)^{2k-1}+\sum_{j=0}^{2k-2}\left(2k-j\right){2k \choose j}\left(\mathbb{E}\left[\left\Vert Y\right\Vert _{2}\right]\right)^{j}\frac{2}{\left(2c_{d,q_{0}}\right)^{\frac{2k-j}{2}}}e^{\left(\frac{2k-j}{2}\right)\log\left(\left(2k-1-j\right)\right)},\label{eq:20}
\end{alignat}
where in $\left(1\right)$ we used the change of variables $\eta=(2c_{d,q_{0}})^{-1/2}t$,
$\left(2\right)$ holds since the integrand is an even function and
$\left(3\right)$ is due to the central absolute moment formula of
a standard normal random variable. 

We wish to show that the expression in \eqref{eq:20} is bounded from
above by $\left(\mathbb{E}\left[\left\Vert Y\right\Vert _{2}\right]+C_{d,q_{0}}\sqrt{k}\right)^{2k}$,
for an appropriate choice of positive constant $C_{d,q_{0}}$, which
depends only on $d$ and $q_{0}$. We will show this for $C_{d,q_{0}}=\sqrt{\frac{e^{4}}{c_{d,q_{0}}}}$
by showing that each term in \eqref{eq:20} is bounded from above
by the corresponding term in $\sum_{j=0}^{2k}{2k \choose j}\left(\mathbb{E}\left[\left\Vert Y\right\Vert _{2}\right]\right)^{j}\left(C_{d,q_{0}}\sqrt{k}\right)^{2k-j}$
(which by the binomial formula equals $\left(\mathbb{E}\left[\left\Vert Y\right\Vert _{2}\right]+C_{d,q_{0}}\sqrt{k}\right)^{2k}$).
For $j=2k$ both summands are equal. As for $j=2k-1$ note that in
\eqref{eq:20} we obtain $\sqrt{\frac{\pi}{c_{d,q_{0}}}}k\left(\mathbb{E}\left[\left\Vert Y\right\Vert _{2}\right]\right)^{2k-1}$,
while in the Binomial formula we obtain $2k\left(\mathbb{E}\left[\left\Vert Y\right\Vert _{2}\right]\right)^{2k-1}C_{d,q_{0}}\sqrt{k}$.
Thus, the result trivially holds by taking $C_{d,q_{0}}\geq\frac{1}{2}\sqrt{\frac{\pi}{c_{d,q_{0}}}}.$
Finally for $0\leq j\leq2k-2$, we observe the following equivalent
statements:
\begin{alignat}{1}
 & \left(2k-j\right){2k \choose j}\left(\mathbb{E}\left[\left\Vert Y\right\Vert _{2}\right]\right)^{j}\frac{2}{\left(2c_{d,q_{0}}\right)^{\frac{2k-j}{2}}}e^{\left(\frac{2k-j}{2}\right)\log\left(2k-1-j\right)}\leq{2k \choose j}\left(\mathbb{E}\left[\left\Vert Y\right\Vert _{2}\right]\right)^{j}\left(C_{d,q_{0}}\sqrt{k}\right)^{2k-j}\nonumber \\
\Leftrightarrow & \left(2k-j\right)\frac{2}{\left(2c_{d,q_{0}}\right)^{\frac{2k-j}{2}}}e^{\left(\frac{2k-j}{2}\right)\log\left(2k-1-j\right)}\leq\left(C_{d,q_{0}}\sqrt{k}\right)^{2k-j}\nonumber \\
\Leftrightarrow & \frac{2}{2k-j}\log\left(4k-2j\right)+\log\left(\frac{2k-1-j}{2c_{d,q_{0}}}\right)\leq\log\left(C_{d,q_{0}}^{2}k\right)\label{eq:21-1}
\end{alignat}
The LHS of \eqref{eq:21-1} is bounded from above by
\[
  \frac{2}{2k-j}\log\left(4k-2j\right)+\log\left(\frac{k}{c_{d,q_{0}}}\right)
\leq  \log\left(\frac{k}{c_{d,q_{0}}}\right)+4
=  \log\left(\frac{e^{4}k}{c_{d,q_{0}}}\right),
\]

which is the expression on the right hand side of \eqref{eq:21-1}.
Together with \eqref{eq:19} and \eqref{eq:20} we obtain 
\begin{align*}
\mathbb{E}[\left\Vert Y\right\Vert _{2}^{2k}] & \leq\sum_{j=0}^{2k}{2k \choose j}\left(\mathbb{E}\left[\left\Vert Y\right\Vert _{2}\right]\right)^{j}\left(C_{d,q_{0}}\sqrt{k}\right)^{2k-j}\\
 & =\left(\mathbb{E}\left[\left\Vert Y\right\Vert _{2}\right]+C_{d,q_{0}}\sqrt{k}\right)^{2k},
\end{align*}
which concludes the proof of Proposition \ref{propr:Second_estimation_on_Y}.
\end{proof}

\section{Bounding the $2k$-Schatten norm of $H$}

\subsection{Proof of Theorem \ref{thm:main_result}}

Let us start with several definitions that are used throughout the
proof. See Example \ref{exa:example_of_definitions} for an illustration. 
\begin{defn}
(Word) A \emph{letter} is an element of $X_{+}^{d-1}$. A \emph{word
}of length $m\in\mathbb{N}$, is a finite sequence $\sigma_{1}\sigma_{2}\ldots\sigma_{m}$
of letters, at least one letter long, such that $\sigma_{i}\cup\sigma_{i+1}\in X^{d}$
for all $1\leq i\leq m-1$. A word is called closed if its first and
last letters are the same, namely $\sigma_{1}=\sigma_{m}$. Two words
of the same length $w=\sigma_{1}\ldots\sigma_{m}$ and $w'=\sigma'_{1}\ldots\sigma'_{m}$
are called \emph{equivalent,} denoted as $w\sim w'$, if there exists
a permutation $\pi$ on $V=X^{0}=\left[n\right]$ such that $\pi\left(\sigma_{i}\right)=\sigma'_{i}$
for every $1\leq i\leq m$, where for $\sigma=\left[\sigma^{0},\sigma^{1},\cdots,\sigma^{d-1}\right]\in X_{\pm}^{d-1}$
we write $\pi\left(\sigma_{i}\right)=\left[\pi\left(\sigma^{0}\right),\pi\left(\sigma^{1}\right),\cdots,\pi\left(\sigma^{d-1}\right)\right]$.
\end{defn}

\begin{defn}
(Support) For a word $w=\sigma_{1}\ldots\sigma_{m}$, we define its
\emph{support} by $\text{supp}_{0}\left(w\right)=\bigcup_{i=1}^{m}\sigma_{i}\subseteq V$,
and its \emph{$d-$cell support} by $\text{supp}_{d}\left(w\right)=\left\{ \sigma_{i}\cup\sigma_{i+1};1\leq i\leq m-1\right\} \subseteq K^{d}.$
\end{defn}

\begin{defn}
(Graph of a word). Given a word $w=\sigma_{1}\ldots\sigma_{m}$, define
$G_{w}=\left(V_{w},E_{w}\right)$ to be the graph with vertex set
$V_{w}=\left\{ \sigma_{i};1\leq i\leq m\right\} \subseteq X_{+}^{d-1}$
and edge set $E_{w}=\left\{ \left\{ \sigma_{i},\sigma_{i+1}\right\} ;1\leq i\leq m-1\right\} \subseteq K^{d}$.
Let $\mathbb{G}$ denote the collection of all labeled, undirected
graphs induced from words. Namely, $\mathbb{G}:=\left\{ G_{w}\text{ : \ensuremath{w} is a word}\right\} .$
The graph $G_{w}$ comes with a path, given by the word $w$, that
goes through all of its vertices and edges. We call each step along
the path, i.e., $\sigma_{i}\sigma_{i+1}$ for some $1\leq i\leq m-1$,
a\emph{ crossing }of the edge $\left\{ \sigma_{i},\sigma_{i+1}\right\} $
and a \emph{crossing} of the $d$-cell $\sigma_{i}\cup\sigma_{i+1}$.
For an edge $e\in E_{w}$, define $N_{w}\left(e\right)$ to be the
number of times the edge $e$ is crossed along the path generated
by $w$ in the graph $G_{w}$. For a $d-$cell $\tau\in\text{supp}_{d}\left(w\right)$,
let $\mathcal{E}_{w}\left(\tau\right):=\left\{ \left\{ \sigma,\sigma'\right\} \in E_{w};\sigma\cup\sigma'=\tau\right\} $
and define

\[
N_{w}\left(\tau\right)=\sum_{e\in\mathcal{E}_{w}\left(\tau\right)}N_{w}\left(e\right)
\]
to be the total number of times the $d-$cell is crossed along the
path generated by the word $w$. 
\end{defn}

\begin{example}
\label{exa:example_of_definitions}In the case $d=2$, for $w_{1}=\left[6,5\right]\left[6,7\right]\left[6,5\right]$
and $w_{2}=\left[2,1\right]\left[3,1\right]\left[2,1\right]$, we
have $w_{1}\sim w_{2}$ via any permutation on $\left[n\right]$ satisfying
$6\leftrightarrow1,5\leftrightarrow2,7\leftrightarrow3$ (see Figure
\ref{fig:Left:-The-path}).
\begin{figure}[h]
\begin{centering}
\includegraphics[scale=0.5]{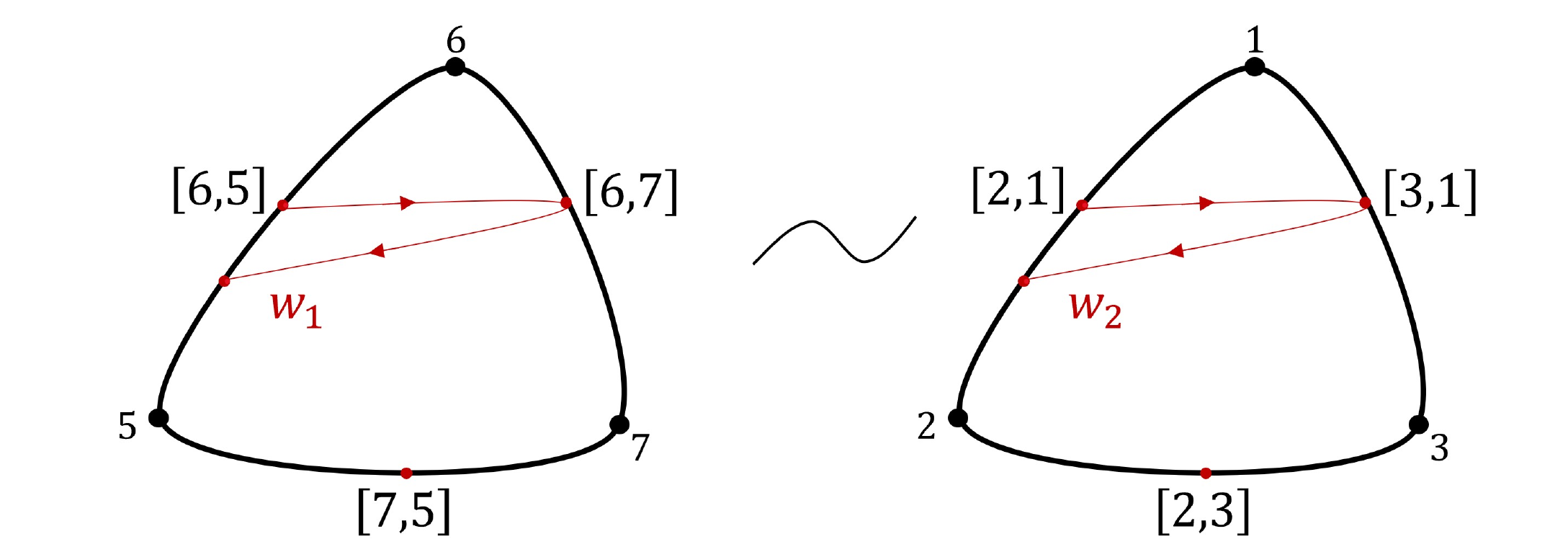}
\par\end{centering}
\caption{Left: The path generated from the word $w_{1}$. Right: The path generated
from the word $w_{2}$ which is equivalent to $w_{1}$.\label{fig:Left:-The-path}}
\end{figure}
Furthermore, the support of the word $w=\left[5,6\right]\left[6,7\right]\left[5,6\right]$$\left[5,8\right]$
is given by $\text{supp}_{0}\left(w\right)=\left\{ 5,6,7,8\right\} $
and its $2$-cell support by $\text{supp}_{2}\left(w\right)=\left\{ \{5,6,7\},\{6,5,8\}\right\} $.
\end{example}

Let $B:=\sqrt{n\text{Var}\left(Z\right)}H.$ Since $H$ is symmetric,
for every $k\in\mathbb{N}$,
\begin{alignat}{1}
\mathbb{E}[\left\Vert H\right\Vert _{S_{2k}}^{2k}] & =\mathbb{E}\left[\text{Tr}\left(H^{2k}\right)\right]\nonumber \\
 & =\mathbb{E}\left[\text{Tr}\left(\left((n\text{Var}\left(Z\right))^{-1/2}B\right)^{2k}\right)\right]\nonumber \\
 & =\frac{1}{\left(n\text{Var}\left(Z\right)\right)^{k}}\mathbb{E}\left[\text{Tr}\left(B^{2k}\right)\right]\nonumber \\
 & =\frac{1}{\left(n\text{Var}\left(Z\right)\right)^{k}}\sum_{\sigma_{1},\ldots,\sigma_{2k}\in X_{+}^{d-1}}\mathbb{E}\left[B_{\sigma_{1}\sigma_{2}}B_{\sigma_{2}\sigma_{3}}\cdots B_{\sigma_{2k}\sigma_{1}}\right].\label{eq:0.1}
\end{alignat}
Each term in the sum, $B_{\sigma_{1}\sigma_{2}}B_{\sigma_{2}\sigma_{3}}\cdots B_{\sigma_{2k}\sigma_{1}}$,
can be associated with a string of letters $\sigma_{1}\sigma_{2}\ldots\sigma_{2k}$.
Since $B_{\sigma\sigma'}=0$ whenever $\sigma\cup\sigma'\notin X^{d}$,
it follows that the list of letters which contribute to the sum in
\eqref{eq:0.1} are the set of closed words of length $2k+1$. Using
the independent structure of $H$ for different $d-$cells and the
definition of $N_{w}\left(\tau\right)$ we then have
\begin{alignat*}{1}
\mathbb{E}[\left\Vert H\right\Vert _{S_{2k}}^{2k}] & =\frac{1}{\left(n\text{Var}\left(Z\right)\right)^{k}}\sum_{\tiny\begin{array}{c}
\text{\ensuremath{w} a closed word}\\
\text{of length \ensuremath{2k+1}}
\end{array}\tiny}\prod_{\tau\in K^{d}}\mathbb{E}\left[B_{\tau}^{N_{w}\left(\tau\right)}\right]\\
 & \leq\frac{1}{\left(n\text{Var}\left(Z\right)\right)^{k}}\sum_{\tiny\begin{array}{c}
\text{\ensuremath{w} a closed word}\\
\text{of length \ensuremath{2k+1}}
\end{array}\tiny}\prod_{\tau\in K^{d}}\left|\mathbb{E}\left[B_{\tau}^{N_{w}\left(\tau\right)}\right]\right|,
\end{alignat*}
where $B_{\tau}:=B_{\sigma\sigma'}$ for some $\sigma,\sigma'\in X_{+}^{d-1}$
with $\sigma\cup\sigma'=\tau$ (observe that the value of $\left|\mathbb{E}\left[B_{\tau}^{N_{w}\left(\tau\right)}\right]\right|$
for any pair $\left(\sigma,\sigma'\right)\in X_{+}^{d-1}$ with $\sigma\cup\sigma'=\tau$,
is the same, hence the last expression is well defined). 

Note that if $N_{w}\left(\tau\right)=1$, then
\[
\mathbb{E}\left[B_{\tau}^{N_{w}\left(\tau\right)}\right]=\mathbb{E}\left[B_{\tau}\right]=\mathbb{E}\left[Z_{\tau}-\mathbb{E}\left[Z\right]\right]=0,
\]
and hence we only need to address closed words of length $2k+1$ such
that 
\begin{equation}
N_{w}\left(\tau\right)\geq2,\qquad\forall\tau\in\text{supp}_{d}\left(w\right).\label{eq:0}
\end{equation}

Denote by $\mathcal{W}_{2k+1}$ a set of representatives for the equivalence
classes of closed words of length $2k+1$ with $N_{w}\left(\tau\right)\geq2$
for all $\tau\in\text{supp}_{d}\left(w\right)$. As a consequence
of the above remark we obtain 
\begin{alignat*}{1}
\mathbb{E}\left[\left\Vert H\right\Vert _{S_{2k}}^{2k}\right] & \leq\frac{1}{\left(n\text{Var}\left(Z\right)\right)^{k}}\sum_{w\in\mathcal{W}_{2k+1}}\sum_{u\sim w}\prod_{\tau\in\text{supp}_{d}\left(u\right)}\mathbb{E}\left[\left|B_{\tau}\right|^{N_{u}\left(\tau\right)}\right].
\end{alignat*}
For $\tau\in K^{d}$ and $m\in\mathbb{N}$, define 
\begin{equation}
b_{\tau}^{\left(m\right)}:=\mathbb{E}\left[\left|B_{\tau}\right|^{m}\right],\label{eq:5.1}
\end{equation}
and note that despite the notation $b_{\tau}^{\left(m\right)}$ is
independent of $\tau$, since all none zero entries of $B$ have the
same distribution. Moreover, $\left|\text{supp}_{d}\left(u\right)\right|=\left|\text{supp}_{d}\left(w\right)\right|$
for any two equivalent words $u$ and $w$. Consequently
\begin{alignat*}{1}
\mathbb{E}\left[\left\Vert H\right\Vert _{S_{2k}}^{2k}\right] & \leq\frac{1}{\left(n\text{Var}\left(Z\right)\right)^{k}}\sum_{w\in\mathcal{W}_{2k+1}}\sum_{u\sim w}\prod_{\tau\in\text{supp}_{d}\left(w\right)}b_{\tau}^{\left(N_{\ensuremath{w}}\left(\tau\right)\right)}\\
 & =\frac{1}{\left(n\text{Var}\left(Z\right)\right)^{k}}\sum_{w\in\mathcal{W}_{2k+1}}\prod_{\tau\in\text{supp}_{d}\left(w\right)}b_{\tau}^{\left(N_{\ensuremath{w}}\left(\tau\right)\right)}\left|\left\{ u:u\text{ is a word such that }u\sim w\right\} \right|,
\end{alignat*}

Throughout the rest of the argument we work with the following set
of representatives: Each equivalence class $[w]$ contains a unique
word $u$ with $\mathrm{supp}_{0}(u)=\{1,2,\ldots,|\mathrm{supp_{0}}(u)|\}$and
such that the appearance of the $0$-cells along the word $u$, is
in increasing order. We choose this word as the unique representative
of the equivalence class. Note that given such a representative $u$
for the equivalence class, the remaining elements in the equivalence
class are given via a permutation $\mathbf{v}\in[n]^{|\mathrm{supp}_{0}(u)|}$,
taking the word $u$ to the word $\mathbf{v}(u)=\mathbf{v}(u_{1})\mathbf{v}(u_{2})\cdots\mathbf{v}(u_{|\mathrm{supp}_{0}(u)|})$,
where we recall that for a cell $\sigma=[\sigma^{0},\sigma^{1},\ldots,\sigma^{d-1}]$
in $X^{d-1}$, we define $\mathbf{v}(\sigma)=[\mathbf{v}(\sigma^{0}),\mathbf{v}(\sigma^{1}),\ldots,\mathbf{v}(\sigma^{d-1})]$.
With a slight abuse of notation, we use $\left[n\right]^{m}$, for
$m\in\mathbb{N}$, to denote all vectors of length $m$, whose components
are distinct and belong to the set $\left[n\right]$. 
\begin{example}
For $d=2$ and $n=8$, consider the word $w=\left[5,6\right]\left[6,7\right]\left[5,6\right]$$\left[6,1\right]\left[1,2\right]$.
The unique representative in the equivalence class of $w$ is $u=[1,2][2,3][1,2][2,4][4,5]$.
Taking the permutation $\mathbf{v}=\left(6,4,8,1,5\right)\in\left[8\right]^{5}$
, gives the equivalent word $\mathbf{v}(u)=[6,4][4,8][6,4][4,1][1,5]$.
\end{example}

Let $w$ be a representative of an equivalence class. Observe that
each closed word $u$ satisfying $u\sim w$, arises from a unique
permutation $\mathbf{v}^{u}\in[n]^{|\mathrm{supp}_{0}(w)|}$. Hence,
\[
\left|\left\{ u:u\text{ a closed word such that }u\sim w\right\} \right|\leq\left|[n]^{|\mathrm{supp}_{0}(w)|}\right|.
\]
Consequently,
\begin{alignat}{1}
\mathbb{E}\left[\left\Vert H\right\Vert _{S_{2k}}^{2k}\right] & \leq\frac{1}{\left(n\text{Var}\left(Z\right)\right)^{k}}\sum_{w\in\mathcal{W}_{2k+1}}\prod_{\tau\in\text{supp}_{d}\left(w\right)}b_{\tau}^{\left(N_{\ensuremath{w}}\left(\tau\right)\right)}\left|[n]^{|\mathrm{supp}_{0}(w)|}\right|\nonumber \\
 & =\frac{1}{\left(n\text{Var}\left(Z\right)\right)^{k}}\sum_{w\in\mathcal{W}_{2k+1}}\sum_{\mathbf{v}\in[n]^{|\mathrm{supp}_{0}(w)|}}\prod_{\tau\in\text{supp}_{d}\left(w\right)}b_{\tau}^{\left(N_{\ensuremath{w}}\left(\tau\right)\right)}\nonumber \\
 & =\frac{1}{\left(n\text{Var}\left(Z\right)\right)^{k}}\sum_{w\in\mathcal{W}_{2k+1}}\sum_{\mathbf{v}\in[n]^{|\mathrm{supp}_{0}(w)|}}\prod_{\tau\in\text{supp}_{d}\left(w\right)}b_{\mathbf{v}\left(\tau\right)}^{\left(N_{\ensuremath{w}}\left(\tau\right)\right)},\label{eq:1}
\end{alignat}
where in the last equality we used the fact that $\left|B_{\mathbf{v}\left(\tau\right)}\right|\overset{\text{law}}{=}\left|B_{\tau}\right|$,
and thus $b_{\tau}^{\left(N_{\ensuremath{w}}\left(\tau\right)\right)}=b_{\mathbf{v}\left(\tau\right)}^{\left(N_{\ensuremath{w}}\left(\tau\right)\right)}$.\textbf{ }
\begin{notation}
\label{nota:Notations1}Given $G=\left(V_{G},E_{G}\right)\in\mathbb{G}$
we define
\[
S_{d}\left(G\right):=\left\{ \tau\in X^{d}\text{ : }\exists\left\{ \sigma,\sigma'\right\} \in E_{G}\text{ such that \ensuremath{\sigma\cup\sigma'=\tau}}\right\} ,
\]
 and 
\[
S_{0}\left(G\right)=\left\{ i\in\left[n\right]\text{ : }\exists\sigma\in V_{G}\text{ such that \ensuremath{i\in\sigma}}\right\} .
\]
Note that for a word $u$, $S_{d}\left(G_{u}\right)=\text{supp}_{d}\left(u\right)$
and $S_{0}\left(G_{u}\right)=\text{supp}_{0}\left(u\right)$. \\
Denote by $\mathbf{N}_{G}=\left(\mathcal{N}_{G}\left(e\right)\right)_{e\in E_{G}}$
a family of positive weights for the edges in $G$. For $\tau\in S_{d}\left(G\right)$
define $\mathcal{N}_{G}\left(\tau\right):=\sum_{\underset{\sigma\cup\sigma'=\tau}{\left\{ \sigma,\sigma'\right\} \in E_{G}}}\mathcal{N}_{G}\left(\left\{ \sigma,\sigma'\right\} \right).$
\end{notation}

\begin{notation}
\label{nota:Notations2}For a graph $G\in\mathbb{G}$ and weights
$\mathbf{N}_{G}=\left(\mathcal{N}_{G}\left(\tau\right)\right)_{\tau\in S_{d}\left(G\right)}$,
denote
\[
\mathcal{G}\left(G;\mathbf{N}_{G}\right)=\sum_{\mathbf{v}\in[n]^{|S_{0}\left(G\right)|}}\prod_{\tau\in S_{d}\left(G\right)}b_{\mathbf{v}\left(\tau\right)}^{\left(\mathcal{N}_{G}\left(\tau\right)\right)}.
\]

Note that $\mathcal{G}\left(G;\mathbf{N}_{G}\right)$ equals $\left|S_{0}\left(G\right)\right|!\cdot{n \choose \left|S_{0}\left(G\right)\right|}\prod_{\tau\in S_{d}\left(G\right)}b_{\tau}^{\left(\mathcal{N}_{G}\left(\tau\right)\right)},$
as $b_{\mathbf{v}\left(\tau\right)}^{\left(\mathcal{N}_{G}\left(\tau\right)\right)}$
does not depend on the choice of $\mathbf{v}$. Nevertheless, we keep
the original notation including the sum in order to apply later on
Hölder's inequality on it.
\end{notation}

Using Notations \ref{nota:Notations1} and \ref{nota:Notations2}
, we can write inequality \eqref{eq:1} as follows
\begin{equation}
\mathbb{E}\left[\left\Vert H\right\Vert _{S_{2k}}^{2k}\right]\leq\frac{1}{\left(n\text{Var}\left(Z\right)\right)^{k}}\sum_{w\in\mathcal{W}_{2k+1}}\mathcal{G}\left(G_{w};\mathbf{N}_{G_{w}}\right),\label{eq:2}
\end{equation}
where the weights $\mathbf{N}_{G_{w}}=\left(N_{w}\left(\tau\right)\right)_{\tau\in\text{supp}_{d}\left(w\right)}$,
are taken to be the crossing numbers.

\subsubsection{Reduction to the case of trees}

The following result is a generalization of \cite[Lemma 2.9]{LHY18},
showing that among all graphs $G\in\mathbb{G}$, the value $\mathcal{G}\left(G;\mathbf{N}_{G}\right)$
is maximized by trees. This will enable us to restrict attention to
trees in the rest of the proof. The main difference is that for $d\geq2$,
there is more than one way that a cycle in the induced graph can traversed
a $d$-cell (see Figures \ref{fig:Case1.1}, \ref{fig:Case1.2.1} and
\ref{fig:Case1.2.2} ), whereas for $d=1$ such a crossing is unique.
In particular, the number of $d-$cells which are crossed by a cycle
of length $\ell$ in $d=1$ is always $\ell$, while for $d\geq2$
it is only bounded from above by $\ell$, and in many cases it is
strictly smaller. The analysis corresponding to the only available
situation in the graph case, namely Figure \ref{fig:Case1.2.2}, is
similar to that in \cite[Lemma 2.9]{LHY18}. However, new arguments
are needed in order to deal with the new cases that does not exist
in the one-dimensional case.
\begin{lem}
\label{lem:Lemma1}For every word $w\in\mathcal{W}_{2k+1}$ and every
family of labelings $\mathbf{N}_{G_{w}}=\left(\mathcal{N}_{G_{w}}\left(e\right)\right)_{e\in E\left(G_{w}\right)}$
for the associated graph $G_{w}$, there exist a graph $T\in\mathbb{G}$
and labelings $\mathbf{N}_{T}=\left(\mathcal{N}_{T}\left(\tau\right)\right)_{\tau\in S_{d}\left(T\right)}$
(depending on $\mathbf{N}_{G_{w}}$), such that the following holds: 
\begin{enumerate}
\item $T$ is a tree.
\item $S_{d}\left(T\right)\subseteq S_{d}\left(G_{w}\right).$
\item $S_{0}\left(T\right)=S_{0}\left(G_{w}\right)$.
\item $\mathcal{N}_{T}\left(\tau\right)\geq\mathcal{N}_{G_{w}}\left(\tau\right),$$\forall\tau\in S_{d}\left(T\right)$.
\item $\sum_{\tau\in S_{d}\left(G_{w}\right)}\mathcal{N}_{G_{w}}\left(\tau\right)=\sum_{\tau\in S_{d}\left(T\right)}\mathcal{N}_{T}\left(\tau\right)$.
\item $\mathcal{G}\left(G_{w};\mathbf{N}_{G_{w}}\right)\leq\mathcal{G}\left(T;\mathbf{N}_{T}\right)$.
\end{enumerate}
\end{lem}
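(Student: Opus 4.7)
The plan is to induct on the cyclomatic number $c(G_w) := |E_{G_w}| - |V_{G_w}| + 1$, which is $\ge 0$ since $G_w$ is connected (being generated by a single word). For the base case $c(G_w) = 0$ the graph is already a tree and I simply take $T := G_w$ with $\mathbf{N}_T := \mathbf{N}_{G_w}$; properties (1)--(6) are immediate.

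For the inductive step I fix any simple cycle $C \subseteq G_w$ and construct a single edge $e^\star \in C$ together with redistributed weights on $G' := G_w \setminus \{e^\star\}$ satisfying $c(G') = c(G_w) - 1$ and the lemma's properties with $G'$ in place of $T$; the inductive hypothesis applied to $(G', \mathbf{N}_{G'})$ then delivers the desired tree. The choice of $e^\star$ depends on how the edges of $C$ interact with the underlying $d$-cell structure: every edge $\{\sigma,\sigma'\}$ carries the $d$-cell $\sigma \cup \sigma'$, and for $d \ge 2$ each $d$-cell supports up to $\binom{d+1}{2}$ distinct edges, so several edges of $E_{G_w}$ may share the same $d$-cell.

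In \emph{Case A} (the Figure~\ref{fig:Case1.2.2} configuration, and the only one available when $d = 1$) I choose $e^\star \in C$ whose $d$-cell $\tau^\star$ is carried by no other edge of $G_w$. Removing $e^\star$ deletes $\tau^\star$ from $S_d$; I transfer its weight onto a neighboring $d$-cell $\tau'$ of $C$ by setting $\mathcal{N}_{G'}(\tau') := \mathcal{N}_{G_w}(\tau') + \mathcal{N}_{G_w}(\tau^\star)$ and leaving the other labels unchanged. Since $V_{G'} = V_{G_w}$ the combinatorial factor $|S_0|!\binom{n}{|S_0|}$ is preserved, so property (6) reduces to the correlation inequality
\[
b^{(m_1+m_2)} \;\ge\; b^{(m_1)}\,b^{(m_2)},
\]
which follows from the Chebyshev/FKG bound applied to the nondecreasing functions $y \mapsto y^{m_1}$ and $y \mapsto y^{m_2}$ evaluated at the real random variable $y = |B_\tau|$. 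In \emph{Case B} (Figures~\ref{fig:Case1.1} and~\ref{fig:Case1.2.1}) every edge of $C$ shares its $d$-cell with at least one other edge in $E_{G_w}$; I pick any $e^\star \in C$, select $\tilde e \in E_{G_w} \setminus \{e^\star\}$ carrying the same $d$-cell $\tau^\star$, and transfer $\mathcal{N}_{G_w}(e^\star)$ onto $\tilde e$. Then $S_d(G') = S_d(G_w)$, every $\mathcal{N}(\tau)$ is preserved, properties (2)--(6) hold with equality, and $c$ drops by one. Finally, any connected graph with vertex set in $X_+^{d-1}$ and $d$-cell-compatible edges is realised as $G_u$ for some closed word $u$ (for instance, via an Eulerian tour of $T$ with each edge doubled), placing the terminal tree $T$ in $\mathbb{G}$.

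I expect the main obstacle to be verifying that Cases A and B are exhaustive and correctly identifying, in each new configuration that appears for $d \ge 2$ but not for $d = 1$, an edge whose deletion reduces $c$ by exactly one while keeping $V_{G_w}$ (and hence $S_0$) intact. For $d = 1$ every cycle of $G_w$ is automatically a cycle in the $d$-cell structure; for $d \ge 2$ a single $d$-cell can host several edges of $G_w$ (most dramatically, the three $(d-1)$-subfaces of a $2$-cell form a triangle), producing ``degenerate'' cycles along which deleting any edge leaves the $d$-cell support intact, which is precisely what Case B is designed to absorb. Ensuring that the dichotomy covers every cycle configuration — including cycles that combine degenerate and non-degenerate segments — is the delicate part of the argument.
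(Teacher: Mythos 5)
Your proof is correct and follows the same overall strategy as the paper --- iteratively removing one edge from a cycle of $G_w$, redistributing its label, and realizing the terminal tree as $G_u$ via a closed word (a doubled Eulerian tour) --- but it organizes the case analysis differently and derives the key inequality by a more direct route. Where the paper distinguishes three cases according to how the cycle's $d$-cells are crossed \emph{by the cycle itself} (cycle contained in a single $d$-cell; two cycle edges sharing a $d$-cell; each crossed $d$-cell hit by exactly one cycle edge), you classify a cycle edge $e^\star$ according to whether its $d$-cell $\tau^\star$ is carried by any other edge of $G_w$, not merely of the cycle. Your Case B (shared $d$-cell) then subsumes the paper's Cases 1.1 and 1.2.1 and requires no inequality at all: $S_d$, $S_0$ and every aggregated weight $\mathcal{N}(\tau)$ are preserved, so $\mathcal{G}$ is unchanged. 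Your Case A (globally unique $d$-cell) corresponds to the paper's Case 1.2.2 and requires exactly the super-multiplicativity $b^{(m_1)}b^{(m_2)}\le b^{(m_1+m_2)}$; you obtain it directly via the Chebyshev/FKG correlation bound for monotone functions of $|B_\tau|$, while the paper reaches the same conclusion via Jensen followed by H\"older on the $\mathbf{v}$-sum. Since $b_\tau^{(m)}$ is $\tau$-independent, the H\"older step collapses to an identity, so the two derivations are equivalent; your phrasing is the more economical one. A genuine benefit of your dichotomy is robustness: the paper's Case 1.2.2 construction deletes $e_{i_{\tau_2}}$ and treats $\tau_2$ as removed from $S_d(G^{(\text{new})})$, which is only valid if $e_{i_{\tau_2}}$ is the sole edge of $G_w$ (not just of the cycle) carrying $\tau_2$. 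Your Case B absorbs precisely the complementary situation --- a cycle $d$-cell also carried by a non-cycle edge --- which the paper's stated trichotomy does not explicitly cover, so your version is both cleaner and airtight on this point.
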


\begin{proof}
If $G_{w}$ is a tree, then by setting $T:=G_{w}$ and $\mathcal{N}_{T}\left(\tau\right)=\mathcal{N}_{G_{w}}\left(\tau\right)$
for every $\tau\in S_{d}\left(G\right)$ we are done. Next, assume
$G_{w}$ is not a tree, namely it contains a cycle. Denote such a
cycle by $e_{1}e_{2}\ldots e_{j}e_{1}$, where $e_{i}\in E_{G_{w}}$
for all $1\leq i\leq j$. There are three possible cases:
\begin{itemize}
\item \textbf{Case $1.1$:} The cycle is contained inside a $d$-cell. In
this case, we define a new graph, induced from $G_{w}$, by omitting
the edge $e_{1}$, and defining its new labeling via
\[
\mathcal{N}_{\text{new}}(e)=\begin{cases}
\mathcal{N}_{G_{w}}(e) & e\notin\{e_{1},e_{j}\}\\
\mathcal{N}_{G_{w}}(e_{1})+\mathcal{N}_{G_{w}}(e_{j}) & e=e_{j}
\end{cases}.
\]
See Figure \ref{fig:Case1.1} for an illustration in the case $d=2$.
\begin{figure}[h]
\begin{centering}
\includegraphics[scale=0.5]{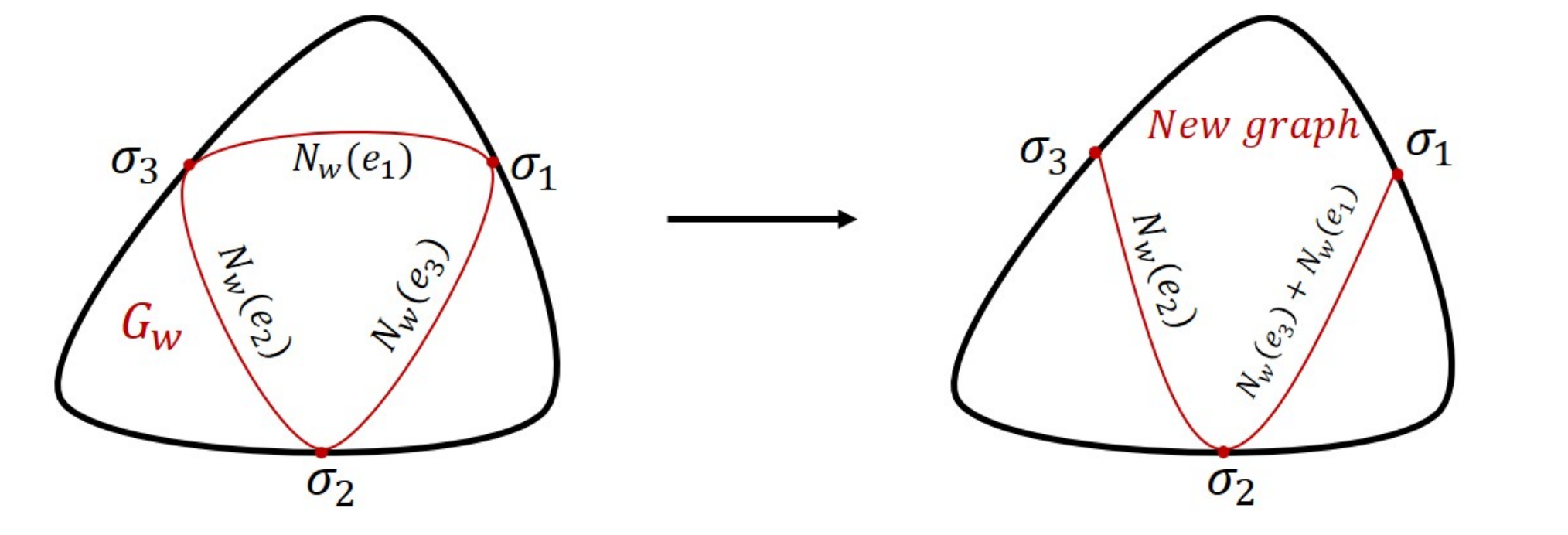}
\par\end{centering}
\caption{Left: A cycle inside a $2-$cell in the initial graph $G_{w}$. Right:
The transition of $G_{w}$ into a new graph, with its new labelings,
which does not contain the cycle\label{fig:Case1.1}}
\end{figure}
\item \textbf{Case $1.2$:} The cycle is not contained in a $d$-cell. We
observe two possible sub-cases:
\begin{itemize}
\item \textbf{Case $1.2.1$:} $\exists\tau\in S_{d}\left(G_{w}\right)$
and $\exists i_{1},i_{2}\in\left[j\right]$ distinct such that $e_{i_{1}},e_{i_{2}}$
cross $\tau$. We define a new graph, induced from $G_{w}$, by omitting
the edge $e_{i_{1}}$, and defining its new labeling via
\[
\mathcal{N}_{\text{new}}\left(e\right):=\begin{cases}
\mathcal{N}_{G_{w}}\left(e\right) & \ensuremath{e\notin\left\{ e_{i_{1}},e_{i_{2}}\right\} }\\
\mathcal{N}_{G_{w}}\left(e_{i_{1}}\right)+\mathcal{N}_{G_{w}}\left(e_{i_{2}}\right) & e=e_{i_{2}}
\end{cases},
\]
see Figure \ref{fig:Case1.2.1} for an illustration in the case $d=2$.
\begin{figure}[h]
\begin{centering}
\includegraphics[scale=0.5]{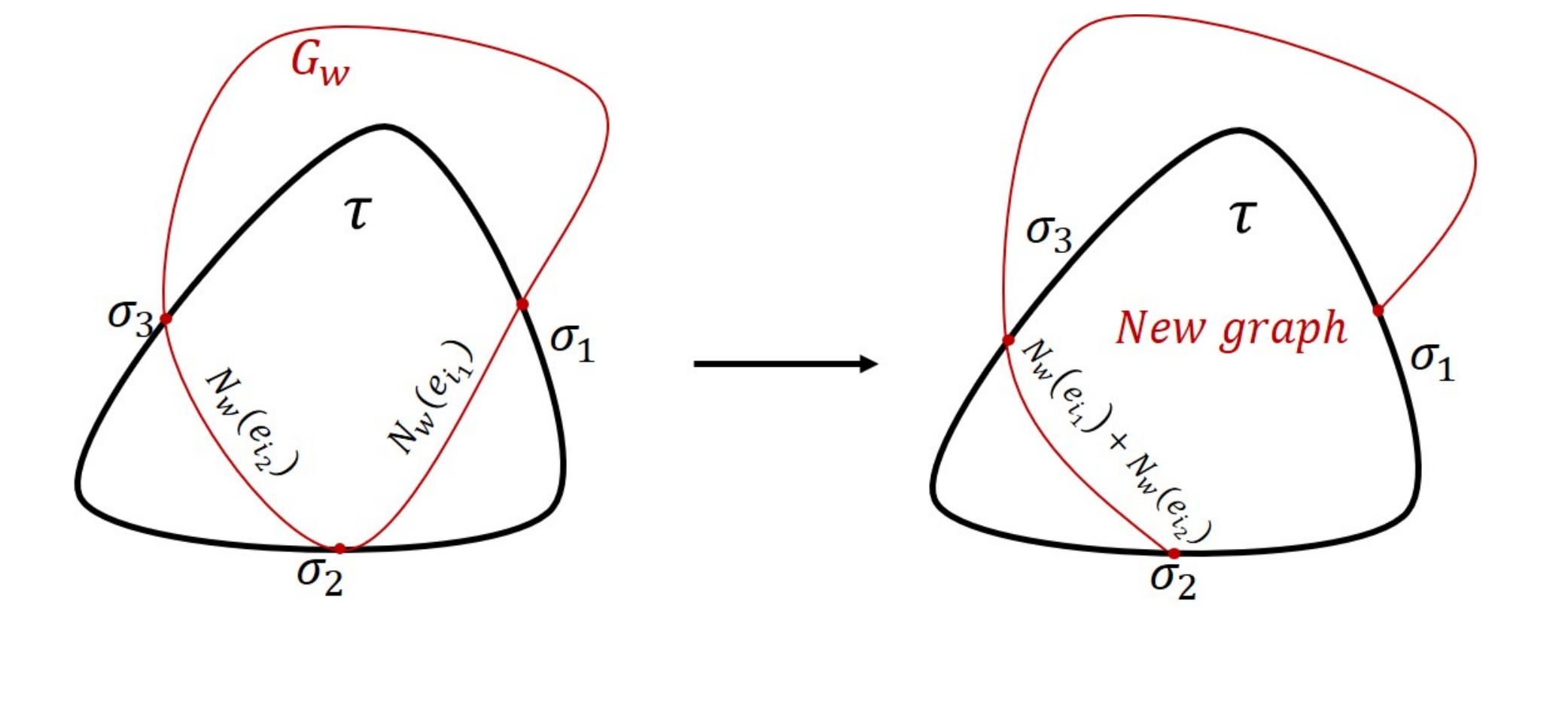}
\par\end{centering}
\caption{Left: A $2-$cell crossed by two edges of a cycle in $G_{w}$. Right:
The transition of $G_{w}$ into a new graph, with its new labelings,
which does not contain the cycle.\protect \\
\emph{Remark:} Note that for $d\protect\geq3$, $e_{i_{1}}$ and $e_{i_{2}}$
not necessarily have a common vertex, but this does not alter the
proof.\label{fig:Case1.2.1}}
\end{figure}
\item \textbf{Case $1.2.2$:} For any $d-$cell $\tau\in S_{d}\left(G_{w}\right)$,
crossed by the cycle, there exists a unique $i_{\tau}\in\left[j\right]$,
such that $e_{i_{\tau}}$ crosses $\tau$. Let $\tau_{1},\tau_{2}$
be two distinct $d-$cells that are both traversed by the cycle (the
existence of such cells is guaranteed by the above assumption), with
a common $\left(d-1\right)-$cell. Denote by $e_{i_{\tau_{1}}}$ and
$e_{i_{\tau_{2}}}$, with $i_{\tau_{1}},i_{\tau_{2}}\subset\left[j\right]$,
the unique edges of the cycle, crossing $\tau_{1}$ and $\tau_{2}$
respectively. Then by Jensen's inequality 
\begin{alignat*}{1}
\mathcal{G}\left(G_{w};\mathbf{N}_{G_{w}}\right) & =\sum_{\mathbf{v}\in[n]^{|S_{0}\left(w\right)|}}\prod_{i=1}^{2}b_{\mathbf{v}\left(\tau_{i}\right)}^{\left(\mathcal{N}_{\ensuremath{G_{w}}}\left(\tau_{i}\right)\right)}\prod_{\tau\in S_{d}\left(G_{w}\right)\backslash\left\{ \tau_{1},\tau_{2}\right\} }b_{\mathbf{v}\left(\tau\right)}^{\left(\mathcal{N}_{\ensuremath{G_{w}}}\left(\tau\right)\right)}\\
 & \leq\sum_{\mathbf{v}\in[n]^{|S_{0}\left(w\right)|}}\prod_{i=1}^{2}\left(b_{\mathbf{v}\left(\tau_{i}\right)}^{\left(\sum_{j=1}^{2}\mathcal{N}_{\ensuremath{G_{w}}}\left(\tau_{j}\right)\right)}\right)^{\frac{\mathcal{N}_{\ensuremath{G_{w}}}\left(\tau_{i}\right)}{\sum_{j=1}^{2}\mathcal{N}_{\ensuremath{G_{w}}}\left(\tau_{j}\right)}}\prod_{\tau\in S_{d}\left(G_{w}\right)\backslash\left\{ \tau_{1},\tau_{2}\right\} }b_{\mathbf{v}\left(\tau\right)}^{\left(\mathcal{N}_{G_{w}}\left(\tau\right)\right)}\\
 & =\sum_{\mathbf{v}\in[n]^{|S_{0}\left(w\right)|}}\prod_{i=1}^{2}\left(b_{\mathbf{v}\left(\tau_{i}\right)}^{\left(\sum_{j=1}^{2}\mathcal{N}_{\ensuremath{G_{w}}}\left(\tau_{j}\right)\right)}\cdot\prod_{\tau\in S_{d}\left(G_{w}\right)\backslash\left\{ \tau_{1},\tau_{2}\right\} }b_{\mathbf{v}\left(\tau\right)}^{\left(\mathcal{N}_{G_{w}}\left(\tau\right)\right)}\right)^{\frac{\mathcal{N}_{G_{w}}\left(\tau_{i}\right)}{\sum_{j=1}^{2}\mathcal{N}_{\ensuremath{G_{w}}}\left(\tau_{j}\right)}}\\
 \end{alignat*}
Since $b_{\tau}^{\left(\alpha\right)}=b_{\tau'}^{\left(\alpha\right)}$ for all $\tau,\tau'\in K^{d}$, it follows from Hölder's inequality that 
\begin{alignat*}{1}
 \mathcal{G}\left(G_{w};\mathbf{N}_{G_{w}}\right) &\leq  \prod_{i=1}^{2}\left(\sum_{\mathbf{v}\in[n]^{|S_{0}\left(w\right)|}}b_{\mathbf{v}\left(\tau_{i}\right)}^{\left(\sum_{j=1}^{2}\mathcal{N}_{\ensuremath{G_{w}}}\left(\tau_{j}\right)\right)}\cdot\prod_{\tau\in S_{d}\left(G_{w}\right)\backslash\left\{ \tau_{1},\tau_{2}\right\} }b_{\mathbf{v}\left(\tau\right)}^{\left(\mathcal{N}_{\ensuremath{G_{w}}}\left(\tau\right)\right)}\right)^{\frac{\mathcal{N}_{\ensuremath{G_{w}}}\left(\tau_{i}\right)}{\sum_{j=1}^{2}\mathcal{N}_{\ensuremath{G_{w}}}\left(\tau_{j}\right)}}\\
 & =\prod_{i=1}^{2}\left(\sum_{\mathbf{v}\in[n]^{|S_{0}\left(w\right)|}}b_{\mathbf{v}\left(\tau_{1}\right)}^{\left(\sum_{j=1}^{2}\mathcal{N}_{\ensuremath{G_{w}}}\left(\tau_{j}\right)\right)}\cdot\prod_{\tau\in S_{d}\left(G_{w}\right)\backslash\left\{ \tau_{1},\tau_{2}\right\} }b_{\mathbf{v}\left(\tau\right)}^{\left(\mathcal{N}_{\ensuremath{G_{w}}}\left(\tau\right)\right)}\right)^{\frac{\mathcal{N}_{\ensuremath{G_{w}}}\left(\tau_{i}\right)}{\sum_{j=1}^{2}\mathcal{N}_{\ensuremath{G_{w}}}\left(\tau_{j}\right)}}\\
 & =\sum_{\mathbf{v}\in[n]^{|S_{0}\left(w\right)|}}b_{\mathbf{v}\left(\tau_{1}\right)}^{\left(\sum_{j=1}^{2}\mathcal{N}_{\ensuremath{G_{w}}}\left(\tau_{j}\right)\right)}\cdot\prod_{\tau\in S_{d}\left(G_{w}\right)\backslash\left\{ \tau_{1},\tau_{2}\right\} }b_{\mathbf{v}\left(\tau\right)}^{\left(\mathcal{N}_{\ensuremath{G_{w}}}\left(\tau\right)\right)},
\end{alignat*}

Therefore we can define a new graph $G^{\left(\text{new}\right)}$
by omitting the unique edge which crosses $\tau_{2}$, namely $e_{i_{\tau_{2}}}$,
and define the graph labelings via
\begin{gather*}
\mathcal{N}_{\text{new}}\left(e\right):=\begin{cases}
\mathcal{N}_{G_{w}}\left(e\right) & e\notin\left\{ e_{i_{\tau_{1}}},e_{i_{\tau_{2}}}\right\} \\
\mathcal{N}_{G_{w}}\left(e_{i_{\tau_{1}}}\right)+\mathcal{N}_{G_{w}}\left(e_{i_{\tau_{2}}}\right) & e=e_{i_{\tau_{1}}}
\end{cases}.
\end{gather*}
The above computation shows that the new graph together with its new
labeling, satisfies all the requirements described in Lemma \ref{lem:Lemma1}.
See Figure \ref{fig:Case1.2.2} for an illustration in the case $d=2$.
\begin{figure}[h]
\begin{centering}
\includegraphics[scale=0.5]{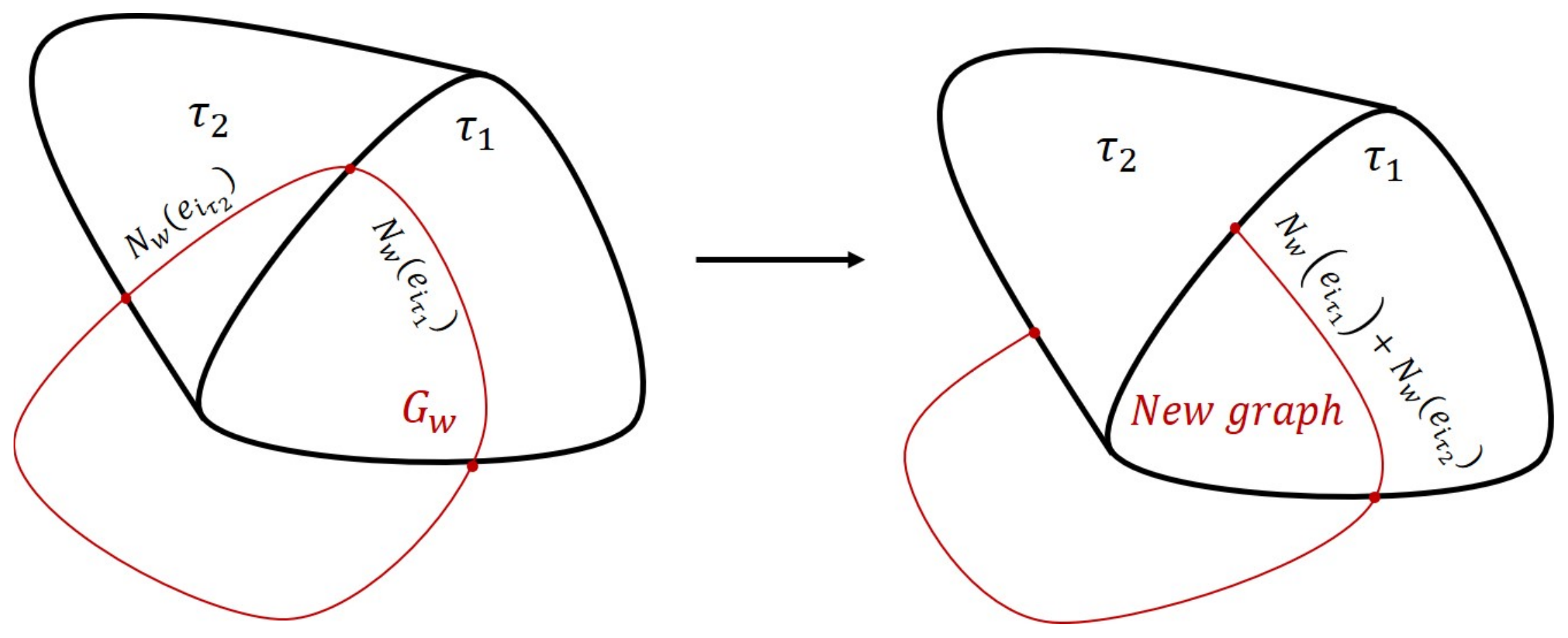}
\par\end{centering}
\centering{}\caption{Left: A cycle in $G_{w}$ whose traversed $d-$cells are crossed by
exactly one edge. Right: The transition of $G_{w}$ into a new graph,
with its new labelings, which does not contain the cycle.\label{fig:Case1.2.2}}
\end{figure}
\end{itemize}
\end{itemize}
Note that in all cases, the new graph attained has the same set of
vertices as $G_{w}$, and so $S_{0}\left(G_{w}\right)=S_{0}\left(G^{\left(\text{new}\right)}\right)$.
Moreover, since we merely omit edges which are part of a cycle, the
new graph remains connected. Yet, the new graph contains one less
cycles than the original graph. 

We repeat the above procedure on the graph $G^{\left(\text{new}\right)}$
repeatedly until there are no cycles left. Denote by $T$ a graph
attained via this process, and by $\mathbf{N}_{T}:=\left(\mathcal{N}_{T}\left(\tau\right)\right)_{\tau\in S_{d}\left(T\right)}$
the resulting labelings of $S_{d}\left(T\right)$ . 

Since $T$ is connected and does not contain any cycle, it is a tree
and belongs to $\mathbb{G}$: denote $V_{T}:=$$\left\{ \sigma_{i}\right\} _{i=1}^{N}$
. Since $T$ is connected, between any $i\neq j$ there is a path,
and thus a word which generates it, denoted by $w_{i,j}$. Assigning
all words $w_{1,2}w_{2,3}\ldots w_{N-1,N}$ gives a new word which
induces the graph $T$ . Furthermore, it is clear from the construction
of $T$ that condition $\left(2\right)-\left(6\right)$ are satisfied,
thus concluding the proof.
\end{proof}
The following lemma is a generalization of \cite[Lemma 2.10]{LHY18}.
As in \cite[Lemma 2.10]{LHY18}, we address the leaves of the tree,
and use similar ideas in order to apply Hölder's inequality recursively.
The main difference is that for $d=1$, the authors in \cite[Lemma 2.10]{LHY18}
used the following property, which is no longer true for $d\geq2$:
A crossing of a $1$-cell is in bijection with the pair of vertices
in its boundary. Namely, given a tree $T,$ and a leaf $\sigma=\left\{ i\right\} $,
there exists a unique $1-$cell $\tau=\left\{ i,j\right\} $ that
contains $i$ and the only possible crossing from $i$ to the remainder
of the tree is the cross from $i$ to $j$. To overcome the new phenomena
in the high-dimensional setting, we use additional combinatorial arguments
which allow us to complete the argument similarly to \cite[Lemma 2.10]{LHY18}.
\begin{lem}
\label{Lem:lemma_2}For any word $u$ such that $G_{u}$ is a tree
and every labelings of its edges $\mathbf{N}_{G_{u}}=\left(\mathcal{N}_{G_{u}}\left(e\right)\right)_{e\in E\left(G_{u}\right)}$,
there exists $S\subseteq S_{d}\left(G_{u}\right)$ and $\text{ \ensuremath{\left(\mathcal{M}\left(\tau\right)\right)_{\tau\in S}}}$
such that the following holds:
\begin{enumerate}
\item $\left|S\right|=\left|S_{0}\left(G_{u}\right)\right|-d$.
\item $\mathcal{M}\left(\tau\right)\geq\mathcal{N}_{\ensuremath{G_{u}}}\left(\tau\right)$
for all $\tau\in S$.
\item $\sum_{\tau\in S}\mathcal{M}\left(\tau\right)=\sum_{\tau\in S_{d}\left(G_{u}\right)}\mathcal{N}_{G_{u}}\left(\tau\right)$.
\end{enumerate}
Furthermore, given any $\left(p_{\tau}\right)_{\tau\in S}\subset(0,1]$
such that $\sum_{\tau\in S}\frac{1}{p_{\tau}}=1$ 

\begin{equation}
\mathcal{G}\left(G_{u};\mathbf{N}_{G_{u}}\right)\leq d!\prod_{\tau\in S}\left(\sum_{\sigma\in X_{+}^{d-1}}\left(\sum_{i\in\left[n\right],i\notin\sigma}b_{\sigma\omega^{\left(\sigma,i\right)}}^{\left(\mathcal{M}\left(\tau\right)\right)}\right)^{p_{\tau}}\right)^{\frac{1}{p_{\tau}}},\label{eq:3}
\end{equation}
 where $\omega^{\left(\sigma,i\right)}\in\Sigma_{\sigma,i}:=\left\{ \left.\sigma'\in X_{+}^{d-1}\right|i\in\sigma'\text{ and \ensuremath{\sigma\overset{K^{d}}{\sim}\sigma'} or \ensuremath{\sigma\overset{K^{d}}{\sim}\overline{\sigma'}}}\right\} $.

\end{lem}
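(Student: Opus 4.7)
The plan is to simplify both sides of (\ref{eq:3}) using the i.i.d.\ structure of the nonzero entries of $B$, then construct $S$ and $\mathcal{M}$ via a BFS combinatorial argument and verify the resulting closed-form inequality. By Notation~\ref{nota:Notations2}, and since $b^{(m)}:=\mathbb{E}[|B_\tau|^m]$ does not depend on $\tau$, the left-hand side equals $\frac{n!}{(n-|S_0(G_u)|)!}\prod_{\tau\in S_d(G_u)} b^{(\mathcal{N}_{G_u}(\tau))}$. For the right-hand side, each inner sum equals $(n-d)\,b^{(\mathcal{M}(\tau))}$ (independent of $\sigma$), and assembling all the H\"older factors collapses it to $d!\,\binom{n}{d}\,(n-d)^{|S|}\prod_{\tau\in S}b^{(\mathcal{M}(\tau))}$, which is actually independent of the exponents $(p_\tau)$ thanks to $\sum 1/p_\tau=1$. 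So it suffices to produce $S,\mathcal{M}$ satisfying (1)--(3) together with
\[
\frac{n!}{(n-|S_0|)!}\prod_{\tau\in S_d}b^{(\mathcal{N}(\tau))} \le \frac{n!}{(n-d)!}\,(n-d)^{|S_0|-d}\prod_{\tau\in S}b^{(\mathcal{M}(\tau))}.
\]

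For the construction, root $G_u$ at an arbitrary $\sigma_0$ and order the remaining vertices $\sigma_1,\dots,\sigma_{N-1}$ in BFS order. Call $\sigma_j$ \emph{fresh} if the unique element $i_{\sigma_j}\in\sigma_j\setminus\mathrm{parent}(\sigma_j)$ does not belong to $\bigcup_{k<j}\sigma_k$. Since each fresh vertex contributes exactly one new $0$-cell, there are $|S_0(G_u)|-d$ fresh vertices in total. For distinct fresh $\sigma_j,\sigma_k$ with $j<k$, the $d$-cells $\tau_j:=\sigma_j\cup\mathrm{parent}(\sigma_j)$ and $\tau_k:=\sigma_k\cup\mathrm{parent}(\sigma_k)$ are distinct, because $i_{\sigma_k}\in\tau_k$ while $i_{\sigma_k}\notin\tau_j$ (both $\sigma_j$ and $\mathrm{parent}(\sigma_j)$ have index strictly less than $k$). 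Define $S:=\{\tau_j:\sigma_j\text{ fresh}\}\subseteq S_d(G_u)$, so $|S|=|S_0(G_u)|-d$; then pick any map $\phi\colon S_d\setminus S\to S$ and set $\mathcal{M}(\tau):=\mathcal{N}_{G_u}(\tau)+\sum_{\tau'\in\phi^{-1}(\tau)}\mathcal{N}_{G_u}(\tau')$ for $\tau\in S$. Conditions (1)--(3) are then immediate.

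Finally, I establish the closed-form inequality by a combinatorial and a moment estimate. The combinatorial estimate $(n-d)(n-d-1)\cdots(n-|S_0|+1)\le(n-d)^{|S_0|-d}$ is immediate (a product of $|S_0|-d$ factors, each at most $n-d$). For the moment estimate, the key input is $b^{(m_1)}\,b^{(m_2)}\le b^{(m_1+m_2)}$, which is the covariance inequality applied to $|B_\tau|^{m_1}$ and $|B_\tau|^{m_2}$, both non-decreasing functions of the same nonnegative random variable $|B_\tau|$. Iterating this along the assignment $\phi$ yields $\prod_{\tau\in S_d}b^{(\mathcal{N}(\tau))}\le\prod_{\tau\in S}b^{(\mathcal{M}(\tau))}$, which combined with the combinatorial estimate closes the argument. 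The main anticipated obstacle is the BFS distinctness argument in the construction step: when $d=1$ every non-root vertex is automatically fresh and distinctness of the $\tau_j$'s is trivial, so this is the genuinely new high-dimensional combinatorial input flagged as needed beyond \cite[Lemma 2.10]{LHY18}.
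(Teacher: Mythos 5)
Your proof is correct, and it takes a genuinely different and substantially shorter route than the paper's. The paper's proof proceeds by a double induction on $\left|S_{0}\left(G_{u}\right)\right|$: it repeatedly identifies a leaf of the tree, applies a map $f$ that handles three separate cases (a leaf whose $d$-cell is crossed only once with an isolated $0$-cell, a leaf whose $d$-cell is crossed multiple times, and a leaf whose new $0$-cell appears in another $d$-cell), iterates $f$ to a fixed configuration, and applies a carefully tracked H\"older inequality at each stage, keeping track of exponents $q_{s},q'_{s},\widehat{q_{s}},\alpha_{s}$ and a homogeneity argument to finally determine $\alpha_{s}=p_{\tau_{s}}$. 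Your approach instead exploits the exchangeability of the $d$-cell weights up front: since $b_{\tau}^{(m)}$ does not depend on $\tau$ (a fact the paper explicitly records in Notation~\ref{nota:Notations2} but then deliberately suppresses ``in order to apply later on H\"older's inequality''), both sides of \eqref{eq:3} collapse to closed forms, the right-hand side becomes independent of the H\"older exponents $(p_{\tau})$ once $\sum_{\tau}1/p_{\tau}=1$ is imposed, and the entire H\"older machinery evaporates. What remains is (a) a BFS ``fresh vertex'' argument producing $|S_{0}(G_{u})|-d$ pairwise distinct $d$-cells in $S_{d}(G_{u})$, with the distinctness argument (that $i_{\sigma_{k}}\in\tau_{k}\setminus\tau_{j}$ because both factors of $\tau_{j}$ precede $\sigma_{k}$) being the genuinely high-dimensional input; (b) the elementary count $\frac{(n-d)!}{(n-|S_{0}|)!}\leq(n-d)^{|S_{0}|-d}$; and (c) the comonotonicity inequality $b^{(m_{1})}b^{(m_{2})}\leq b^{(m_{1}+m_{2})}$ iterated along an arbitrary map $\phi:S_{d}\setminus S\to S$ to build $\mathcal{M}$. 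All three of these steps, and the verification that conditions (1)--(3) hold, are correct. Also note that in the process your proof makes visible that the displayed inequality \eqref{eq:3} is actually an identity-free-of-$p_{\tau}$ bound, which clarifies the later choice $p_{\tau}=2k/\mathcal{M}(\tau)$ in the proof of Theorem~\ref{thm:main_result}.

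The trade-off, which is worth being explicit about, is that your collapse step uses the full distributional symmetry of the entries, so the argument would not transfer to an inhomogeneous model where $b_{\tau}^{(m)}$ genuinely depends on $\tau$, whereas the paper's H\"older-iteration proof is structurally agnostic to that and could in principle be adapted. For the present paper, where the entries are always i.i.d.\ up to sign and exchangeability holds exactly, your shortcut is entirely valid, and the combinatorial content (tree structure, fresh $0$-cells, distinctness of the associated $d$-cells) is isolated more transparently than in the paper's version.

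One small remark unrelated to your argument: the lemma as stated asks for $(p_{\tau})_{\tau\in S}\subset(0,1]$, but then $\sum_{\tau\in S}1/p_{\tau}\geq|S|$ forces $|S|=1$; the intended condition is clearly $p_{\tau}\geq1$ (which is what the later application $p_{\tau}=2k/\mathcal{M}(\tau)\geq1$ uses, since $\mathcal{M}(\tau)\leq\sum\mathcal{M}=2k$). Your collapse argument is indifferent to this, which is another minor robustness advantage of your route.
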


\begin{rem*}
The expression on the right hand side of \eqref{eq:3} is independent
of the choice of $\omega^{\left(\sigma,i\right)}$, since $b_{\sigma\sigma'}^{\left(\mathcal{M}\left(\tau\right)\right)}=b_{\sigma\sigma''}^{\left(\mathcal{M}\left(\tau\right)\right)}$
for any $\sigma'$,$\sigma''\in\Sigma_{\sigma,i}$.
\end{rem*}
\begin{proof}
The proof proceeds by induction on $\left|S_{0}\left(G_{u}\right)\right|$.
We operate a procedure on the graph $G_{u}$ in order to obtain a
new graph which is still a tree, by omitting $0$-cells from the original
graph. This allows us to use the induction hypothesis on the smaller
graph. We describe how the omitted $0$-cell is chosen, and we show
there is a 1-1 correspondence between an omitted $0$-cell and an
omitted $d$-cell. The set $S$ is the set of all omitted $d$-cells
according to this correspondence. We simultaneously prove inequality
\eqref{eq:3}, by an additional induction, where in each step we have
an omitted $d$-cell $\tau$, and we attach to it a number $p_{\tau}\in(0,1]$.
Observe that the set $S$, which is determined using the mentioned
procedure, is independent with the proof of inequality \eqref{eq:3},
and we do it together for the sake of simplicity and coherence. 

For the initial case, if $\left|S_{0}\left(G_{u}\right)\right|=d+1,$
then $\left|S_{d}\left(G_{u}\right)\right|=1$, namely $S_{d}\left(G_{u}\right)=\{\tau_{0}\}$
for some $\tau_{0}\in X^{d}$, and the result follows readily by setting
$S=\{\tau_{0}\}$ and $\mathcal{M}\left(\tau_{0}\right):=\mathcal{N}_{G_{u}}\left(\tau_{0}\right)$.
Indeed, it is clear that conditions $\left(1\right)-\left(3\right)$
are satisfied. Furthermore, given $p_{\tau}>0$ with $\sum_{\tau\in S}\frac{1}{p_{\tau}}=1$,
since $S=\left\{ \tau_{0}\right\} $ it follows that $p_{\tau_{0}}=1$,
and therefore
\begin{alignat*}{1}
\mathcal{G}\left(G_{u};\mathbf{N}_{G_{u}}\right) & =\sum_{\mathbf{v}\in[n]^{|S_{0}\left(G_{u}\right)|}}\prod_{\tau\in S_{d}\left(G_{u}\right)}b_{\mathbf{v}\left(\tau\right)}^{\left(\mathcal{N}_{G_{u}}\left(\tau\right)\right)}=\sum_{\mathbf{v}\in[n]^{d+1}}b_{\mathbf{v}\left(\tau_{0}\right)}^{\left(\mathcal{M}\left(\tau_{0}\right)\right)}\\
 & =\sum_{\mathbf{v}\in[n]^{d}}\sum_{i\in\left[n\right]\backslash\mathbf{v}\left(\sigma_{0}\right)}b_{i\cup\mathbf{v}\left(\sigma_{0}\right)}^{\left(\mathcal{M}\left(\tau_{0}\right)\right)}\\
 & \overset{\left(1\right)}{=}d!\sum_{\sigma\in X_{+}^{d-1}}\left(\sum_{i\in\left[n\right],i\notin\sigma}b_{\sigma\omega^{\left(\sigma,i\right)}}^{\left(\mathcal{M}\left(\tau_{0}\right)\right)}\right),
\end{alignat*}
where $\left(1\right)$ follows because each $\sigma\in X_{+}^{d-1}$
is attained by $d!$ different $\mathbf{v}\in[n]^{d}$ (up to reordering
of its entries). This concludes the proof in the case $\left|S_{0}\left(G_{u}\right)\right|=d+1$.

Let us now describe the induction step. Suppose we have shown that
for some $r\in\mathbb{N}$, whenever $G_{u}$ is a tree with $|S_{0}(G_{u})|>d+r$,
one can find $S=\left\{ \tau_{s}\right\} _{s=1}^{r}\subseteq S_{d}\left(G_{u}\right)$,
weights $\text{ \ensuremath{\left(\mathcal{M}\left(\tau_{s}\right)\right)_{s=1}^{r}}}$,
a graph $G\in\mathbb{G}$ and weights $\left(\mathcal{N}_{G}\left(\tau\right)\right)_{\tau\in S_{d}\left(G\right)}$
, such that 
\begin{alignat}{1}
\mathcal{G}\left(G_{u};\mathbf{N}_{G_{u}}\right) & \leq\prod_{s=1}^{\left|S\right|}\left[\sum_{\mathbf{v}\in[n]^{|S_{0}\left(G\right)|}}\left(\sum_{i\notin\mathbf{v}\left(\sigma_{1}\right)}b_{i\cup\mathbf{v}\left(\sigma_{1}\right)}^{\left(\mathcal{M}\left(\tau_{s}\right)\right)}\right)^{q_{s}}\prod_{\tau\in S_{d}\left(G\right)}b_{\mathbf{v}\left(\tau\right)}^{\left(\mathcal{N}_{G}\left(\tau\right)\right)}\right]^{\frac{1}{\alpha_{s}}},\label{eq:12}
\end{alignat}
where
\begin{enumerate}
\item $G$ is a tree with
\begin{enumerate}
\item $\sigma_{1}\in X_{+}^{d-1}$ satisfies $\sigma_{1}\in E\left(G\right)$.
\item $S_{d}\left(G\right)\subseteq S_{d}\left(G_{u}\right)$.
\item $S_{0}\left(G\right)\subseteq S_{0}\left(G_{u}\right)$ with $\left|S_{0}\left(G\right)\right|=\left|S_{0}\left(G_{u}\right)\right|-r$.
\end{enumerate}
\item The weights satisfy
\begin{enumerate}
\item $\mathcal{M}\left(\tau\right)\geq\mathcal{N}_{\ensuremath{G_{u}}}\left(\tau\right)$
for all $\tau\in S$.
\item $\sum_{\tau\in S}\mathcal{M}\left(\tau\right)=\sum_{\tau\in S_{d}\left(G_{u}\right)}\mathcal{N}_{G_{u}}\left(\tau\right)$.
\item $\mathcal{N}_{G}\left(\tau\right)\geq\mathcal{N}_{G_{\mathbf{u}}}\left(\tau\right)$
for any $\tau\in S_{d}\left(G\right)$.
\item $\sum_{\tau\in S_{d}\left(G_{\mathbf{u}}\right)}\mathcal{N}_{G_{u}}\left(\tau\right)=\sum_{s=1}^{r}\mathcal{M}\left(\tau_{s}\right)+\sum_{\tau\in S_{d}\left(G\right)}\mathcal{N}_{G}\left(\tau\right)$.
\end{enumerate}
\item The numbers $q_{s}$ satisfy
\begin{enumerate}
\item $q_{s}=\sum_{j=1}^{r}\frac{p_{\tau_{s}}}{p_{\tau_{j}}}$ for all $1\leq s\leq r$,
where $(p_{\tau})_{\tau\in S}\subset(0,1]$.
\end{enumerate}
\item The right-hand side of \eqref{eq:12} is $1$-homogeneous in all variables
$\left\{ b^{\left(\mathcal{N}\left(\tau\right)\right)}\right\} $
which determine the value of $\alpha_{s}$.
\end{enumerate}
By the induction hypothesis $G$ is a tree and thus must contain a
leaf, i.e., vertex of degree one. Denote by $\sigma_{\ell}$ such
a leaf and by $\sigma'_{\ell}$ the unique $\left(d-1\right)-$cell
such that $\left\{ \sigma_{\ell},\sigma'_{\ell}\right\} $ is an edge.
Set $\tau_{\ell}:=\sigma_{\ell}\cup\sigma'_{\ell}$ and let $i_{\ell}\in S_{0}\left(G\right)$
be the unique vertex ($0-$cell) in $G$ such that $\tau_{\ell}=\sigma'_{\ell}\cup\{i_{\ell}\}$.
Denote by $\hat{\text{\ensuremath{\mathbb{G}}}}$ a subset of $\mathbb{G}$
which contains merely elements from $\mathbb{G}$ which are trees,
and 
\[
\Psi=\bigcup_{k=1}^{{n \choose d}}\left\{ \left(m_{j}\right)_{j\in I};\left|I\right|=k,\text{ \ensuremath{m_{j}}}\in\mathbb{N}\right\} ,
\]
and define the function $f:\hat{\text{\ensuremath{\mathbb{G}}}}\times\Psi\to\hat{\text{\ensuremath{\mathbb{G}}}}\times\Psi$
(corresponds to a tree $G$ and the labelings of its edges $\left(\mathcal{N}_{G}\left(e\right)\right)_{e\in E\left(G\right)})$
according to the following three possible cases
\begin{itemize}
\item \textbf{Case $2.1$: }$\tau_{\ell}$ is traversed only by the edge
$\left\{ \sigma_{\ell},\sigma_{i_{\ell}}\right\} $ and $\left\{ \tau\in S_{d}\left(G_{u}\right)|i_{\ell}\in\tau\right\} =\left\{ \tau_{\ell}\right\} $
(namely, the $0-$cell $i_{\ell}$ is contained only in $\tau_{\ell}).$
In this case $f\left(\left(G,\mathbf{N}_{G}\right)\right)=\left(G,\mathbf{N}_{G}\right)$
(namely no changes are done in $G$ and its labelings). See Figure \ref{fig:Case2.1}
for an illustration in the case $d=2$. \\
\begin{figure}[h]
\begin{centering}
\includegraphics[scale=0.5]{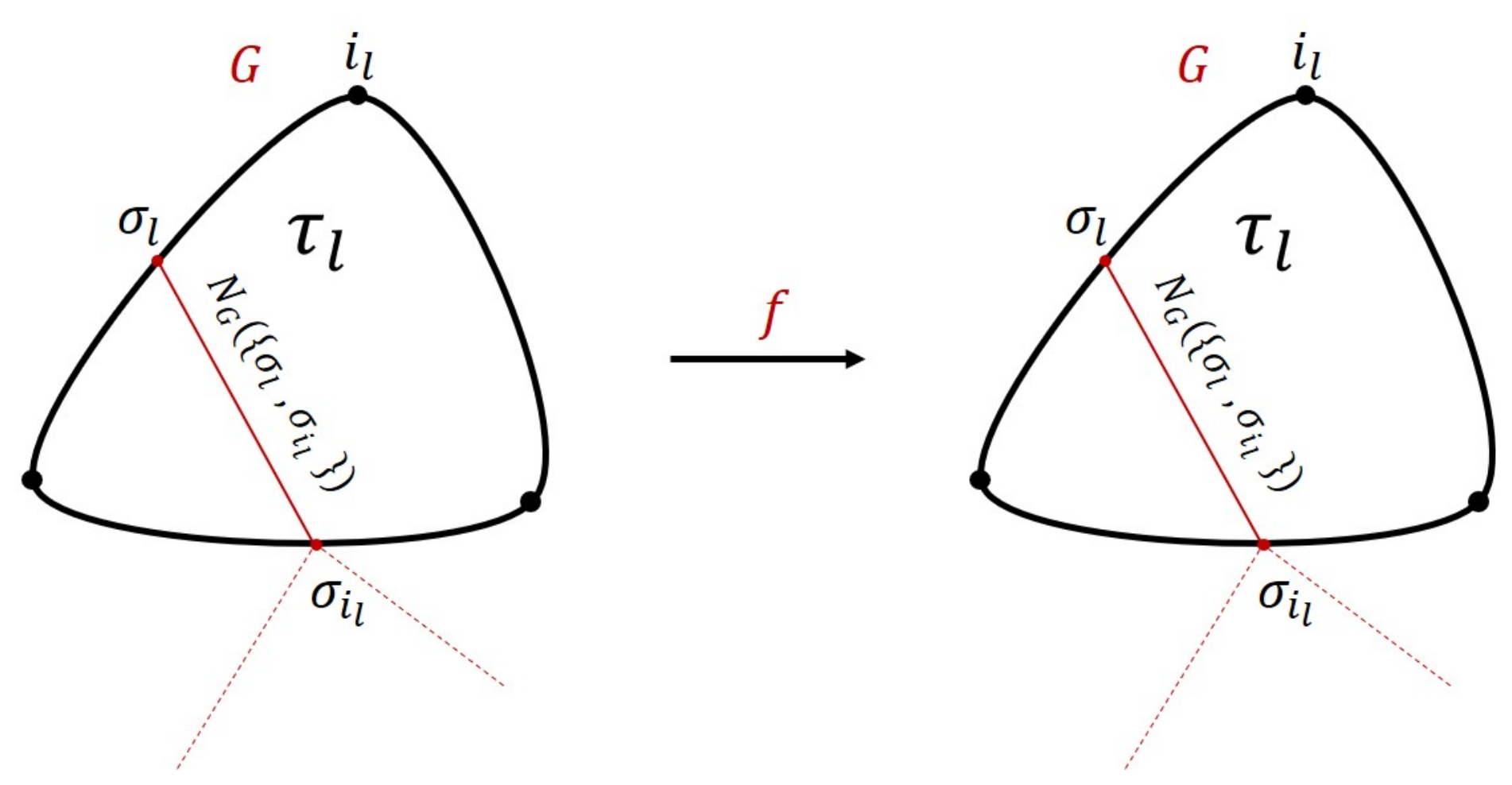}
\par\end{centering}
\caption{A leaf $\sigma_{\ell}$, in a $2-$cell $\tau_{\ell}$, such that
$i_{\ell}$ is contained only in the $2-$cell $\tau_{\ell}$ and
$\left\{ \sigma_{\ell},\sigma_{i_{\ell}}\right\} $ is the unique
edge which crosses $\tau_{\ell}$.\label{fig:Case2.1}}
\end{figure}
\item \textbf{Case $2.2$: }$\tau_{\ell}$ is traversed by an edge, $\left\{ \sigma_{j},\sigma_{m}\right\} $,
distinct from $\left\{ \sigma_{\ell},\sigma_{i_{\ell}}\right\} $.
In this case $f\left(\left(G,\mathbf{N}_{G}\right)\right)=\left(G_{1},\mathbf{N}_{G_{1}}\right)$,
where $G_{1}$ is the graph obtained from $G$ by omitting the edge
$\left\{ \sigma_{\ell},\sigma_{i_{\ell}}\right\} $, and labelings
for its edges $\mathbf{N}{}_{G_{1}}=\left(\mathcal{N}_{G_{1}}\left(e\right)\right)_{e\in E\left(G\right)\backslash\left\{ \sigma_{\ell},\sigma_{i_{\ell}}\right\} }$,
defined via: 
\begin{gather*}
\mathcal{N}_{G_{1}}\left(e\right):=\begin{cases}
\mathcal{N}_{G}\left(e\right) & e\notin\left\{ \left\{ \sigma_{j},\sigma_{m}\right\} ,\left\{ \sigma_{\ell},\sigma_{i_{\ell}}\right\} \right\} \\
\mathcal{N}_{G}\left(\left\{ \sigma_{\ell},\sigma_{i_{\ell}}\right\} \right)+\mathcal{N}_{G}\left(\left\{ \sigma_{j},\sigma_{m}\right\} \right) & e=\left\{ \sigma_{j},\sigma_{m}\right\} 
\end{cases},
\end{gather*}
See Figure \ref{fig:Case2.2} for an illustration in the case $d=2$.\\
\begin{figure}[h]
\begin{centering}
\includegraphics[scale=0.6]{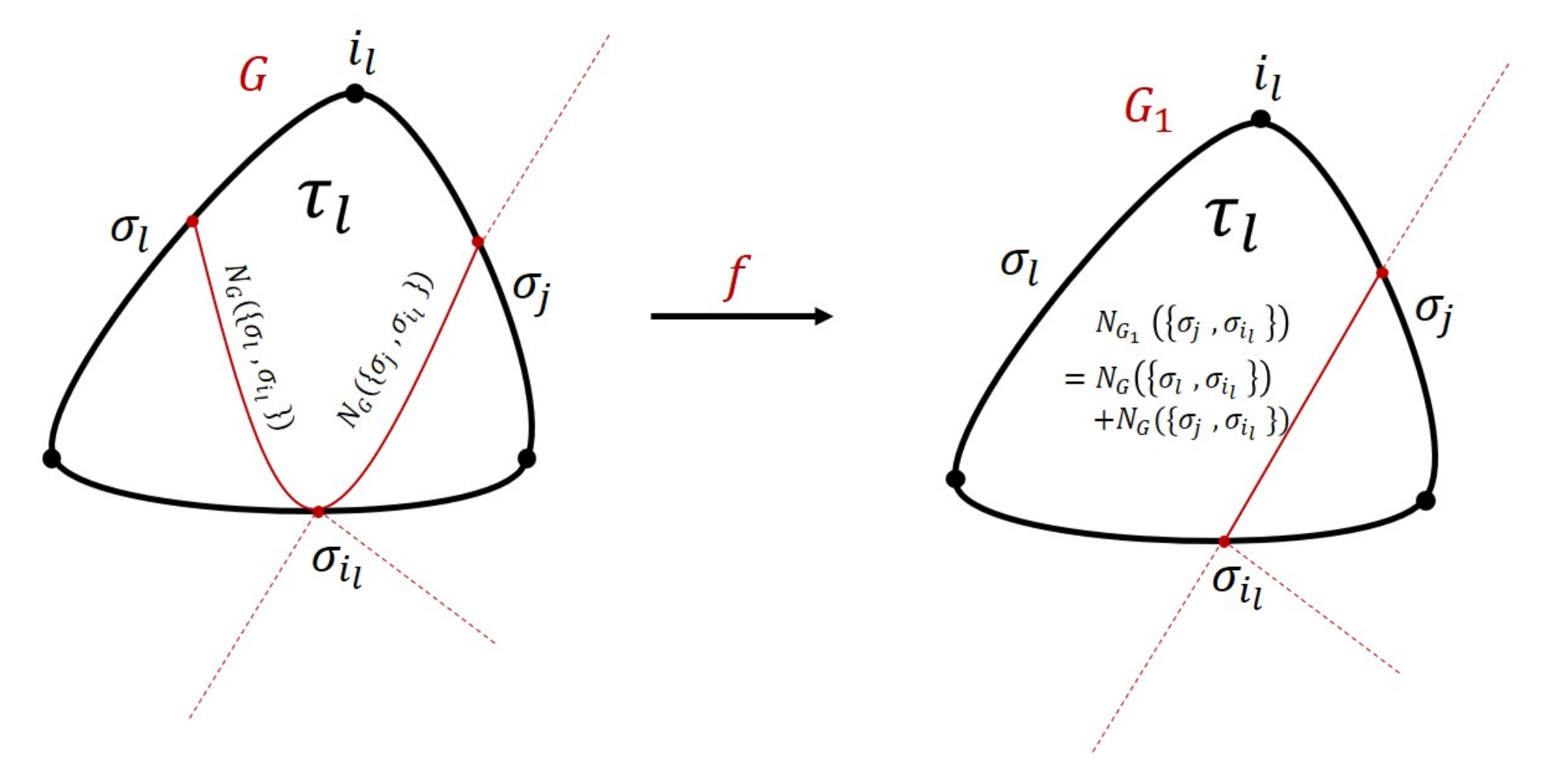}
\par\end{centering}
\caption{Left: A leaf $\sigma_{\ell}$, in a $2-$cell $\tau_{\ell}$, such
that $\tau_{\ell}$ is crossed by more than one edge ($\left\{ \sigma_{\ell},\sigma_{i_{\ell}}\right\} $
and $\left\{ \sigma_{j},\sigma_{m}\right\} $ where $m=i_{\ell}$).
Right: The transition of $G$ under $f$ into a new graph, $G_{1}$
, with new labelings for its edges.\label{fig:Case2.2}}
\end{figure}

\begin{itemize}
\item \textbf{Case $2.3$: }$\tau_{\ell}$ is traversed only by the edge
$\left\{ \sigma_{\ell},\sigma_{i_{\ell}}\right\} $, and $i_{\ell}$
is contained in a $d-$cell, $\tau\in S_{d}\left(G_{u}\right)$, distinct
from $\tau_{\ell}$. This case guarantees the existence of a $\sigma\in X_{+}^{d-1}$,
such that $\left\{ \sigma_{i_{\ell}},\sigma\right\} \in E\left(G_{u}\right)$
and $\sigma_{i_{\ell}}\cup\sigma=\tau'\in X^{d}$, where $\tau'\neq\tau_{\ell}$
(otherwise $G$ would have at least two connected components, in contradiction
to its connectivity). If there is more then one $\sigma\in X_{+}^{d-1}$
satisfying this condition, we choose one in a deterministic way and
define $f\left(\left(G,\mathbf{N}_{G}\right)\right)=\left(G_{2},\mathbf{N}_{G_{2}}\right)$,
where $G_{2}$ is the graph obtained from $G$ by omitting the edge
$\left\{ \sigma_{\ell},\sigma_{i_{\ell}}\right\} $, and we equip
$G_{2}$ with new labelings for its edges, $\mathbf{N}_{G_{2}}=\left(\mathcal{N}_{G_{2}}\left(e\right)\right)_{e\in E\left(G\right)\backslash\left\{ \sigma_{\ell},\sigma_{i_{\ell}}\right\} }$,
defined via:
\begin{gather*}
\mathcal{N}_{G_{2}}\left(e\right):=\begin{cases}
\mathcal{N}_{G}\left(e\right) & e\notin\left\{ \left\{ \sigma,\sigma_{i_{\ell}}\right\} ,\left\{ \sigma_{\ell},\sigma_{i_{\ell}}\right\} \right\} \\
\mathcal{N}_{G}\left(\left\{ \sigma_{\ell},\sigma_{i_{\ell}}\right\} \right)+\mathcal{N}_{G}\left(\left\{ \sigma,\sigma_{i_{\ell}}\right\} \right) & e=\left\{ \sigma,\sigma_{i_{\ell}}\right\} 
\end{cases},
\end{gather*}
See Figure \ref{fig:Case2.3} for illustration in the case $d=2$.\\
\begin{figure}[h]
\begin{centering}
\includegraphics[scale=0.5]{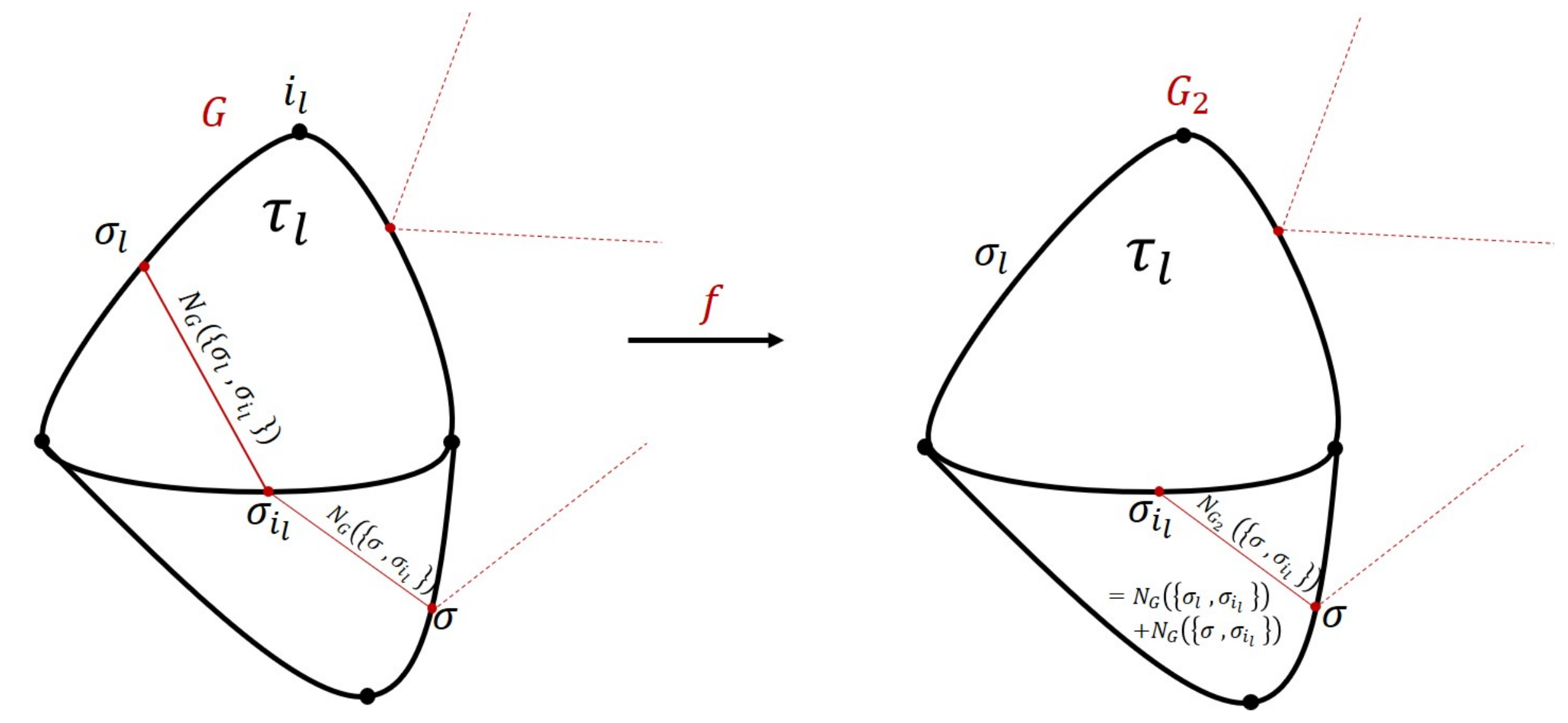}
\par\end{centering}
\caption{Left: A leaf $\sigma_{\ell}$ in a $2-$cell $\tau_{\ell}$ such that
$\tau_{\ell}$ is crossed uniquely by the edge $\left\{ \sigma_{\ell},\sigma_{i_{\ell}}\right\} $,
and $i_{\ell}$ is contained in $\tau\in S_{d}\left(G_{u}\right)$
such that $\tau\protect\neq\tau_{\ell}$. Right: The transition of
$G$ under $f$ into a new graph, $G_{2}$ , with the new labelings
for its edges.\label{fig:Case2.3}}
\end{figure}
\end{itemize}
It is clear that $\mathcal{G}\left(G;\mathbf{N}_{G}\right)=\mathcal{G}\left(G_{1};\mathbf{N}_{G_{1}}\right)$
since $S_{0}\left(G\right)=S_{0}\left(G_{1}\right)$, $S_{d}\left(G\right)=S_{d}\left(G_{1}\right)$
and the labelings of each $\tau\in S_{d}\left(G\right)$ are the same
as those of $G_{1}$. The inequality $\mathcal{G}\left(G;\mathbf{N}_{G}\right)\leq\mathcal{G}\left(G_{2};\mathbf{N}_{G_{2}}\right)$
follows from the same arguments which were introduced in Lemma \eqref{lem:Lemma1},
Case $1.2.2$. Observe that for $i\in\left\{ 1,2\right\} $, $S_{0}\left(G\right)=S_{0}\left(G_{i}\right)$
and $G_{i}$ is itself a tree since it is obtained by omitting a leaf
from a tree. Thus $G_{i}\in\mathbb{G}$ for $i\in\left\{ 1,2,\right\} $
and $f$ is well defined. We may therefore apply $f$ repeatedly.
Since $\text{\ensuremath{\left|\mathbb{G}\right|}}<\infty$, there
exists $m_{0}>\ell_{0}\in\mathbb{N}\cup\left\{ 0\right\} $ for which
$f^{m_{0}}\left(\left(G,\mathbf{N}_{G}\right)\right)=f^{\ell_{0}}\left(\left(G,\mathbf{N}_{G}\right)\right)$.
Denote $\left(G_{\text{new}},\mathbf{N}_{G_{\text{new}}}\right):=f^{\ell_{0}}\left(\left(G,\mathbf{N}_{G}\right)\right)\in\mathbb{G}\times\Psi$,
then $f^{m_{0}-\ell_{0}}\left(\left(G_{\text{new}},\mathbf{N}_{G_{\text{new}}}\right)\right)=\left(G_{\text{new}},\mathbf{N}_{G_{\text{new}}}\right)$
which means that $G_{\text{new}}$ is a tree with a leaf as in Case
$2.1$, which we denote again by $\sigma_{\ell}$. As we have already
explained 
\begin{equation}
\mathcal{G}\left(G;\mathbf{N}_{G}\right)\leq\mathcal{G}\left(G_{\text{new}};\mathbf{N}_{G_{\text{new}}}\right),\label{eq:12.2}
\end{equation}
and $S_{0}\left(G\right)=S_{0}\left(G_{\text{new }}\right)$. Inequality
\eqref{eq:12.2} together with the induction hypothesis \eqref{eq:12}
yields the following bound 
\begin{alignat*}{1}
\mathcal{G}\left(G_{u};\mathbf{N}_{G_{u}}\right) & \leq\prod_{s=1}^{\left|S'\right|}\left[\sum_{\mathbf{v}\in[n]^{|S_{0}\left(G_{\text{new}}\right)|}}\left(\sum_{i\notin\mathbf{v}\left(\sigma_{s}\right)}b_{i\cup\mathbf{v}\left(\sigma_{s}\right)}^{\left(\mathcal{M}\left(\tau_{s}\right)\right)}\right)^{q_{s}}\prod_{\tau\in S_{d}\left(G_{\text{new}}\right)}b_{\mathbf{v}\left(\tau\right)}^{\left(\mathcal{N}_{G_{\text{new}}}\left(\tau\right)\right)}\right]^{\frac{1}{\alpha_{s}}},
\end{alignat*}
where $S'$ is the set of $d$-cells removed so far.

We are now turning to bound from above each term in the above multiplication.
Fix $1\leq s\leq\left|S'\right|$ 

\begin{alignat}{1}
 & \sum_{\mathbf{v}\in[n]^{|S_{0}\left(G_{\text{new}}\right)|}}\left(\sum_{i\notin\mathbf{v}\left(\sigma_{1}\right)}b_{i\cup\mathbf{v}\left(\sigma_{1}\right)}^{\left(\mathcal{M}\left(\tau_{s}\right)\right)}\right)^{q_{s}}\prod_{\tau\in S_{d}\left(G_{\text{new}}\right)}b_{\mathbf{v}\left(\tau\right)}^{\left(\mathcal{N}_{G_{\text{new}}}\left(\tau\right)\right)}\nonumber \\
 & =\sum_{\mathbf{v}\in[n]^{|S_{0}\left(G_{\text{new}}\right)|}}\left(\sum_{i\notin\mathbf{v}\left(\sigma_{1}\right)}b_{i\cup\mathbf{v}\left(\sigma_{1}\right)}^{\left(\mathcal{M}\left(\tau_{s}\right)\right)}\right)^{q_{s}}b_{\mathbf{v}\left(\tau_{\ell}\right)}^{\left(\mathcal{N}_{\ensuremath{G_{\text{new}}}}\left(\tau_{\ell}\right)\right)}\prod_{\tau\in S_{d}\left(G_{\text{new}}\right)\backslash\left\{ \tau_{\ell}\right\} }b_{\mathbf{v}\left(\tau\right)}^{\left(\mathcal{N}_{G_{\text{new}}}\left(\tau\right)\right)}\nonumber \\
 & \overset{\left(1\right)}{=}\sum_{\mathbf{v}\in[n]^{|S_{0}\left(G_{\text{new}}\right)|}}\left(\sum_{i\notin\mathbf{v}\left(\sigma_{i_{\ell}}\right)}b_{i\cup\mathbf{v}\left(\sigma_{i_{\ell}}\right)}^{\left(\mathcal{M}\left(\tau_{s}\right)\right)}\right)^{q_{s}}b_{\mathbf{v}\left(\tau_{\ell}\right)}^{\left(\mathcal{N}_{\ensuremath{G_{\text{new}}}}\left(\tau_{\ell}\right)\right)}\prod_{\tau\in S_{d}\left(G_{\text{new}}\right)\backslash\left\{ \tau_{\ell}\right\} }b_{\mathbf{v}\left(\tau\right)}^{\left(\mathcal{N}_{G_{\text{new}}}\left(\tau\right)\right)},\label{eq:12.4}
\end{alignat}
where $\left(1\right)$ follows since $b_{\tau}^{\left(\mathcal{M}\left(\tau_{s}\right)\right)}$
does not depend on the $d$-cell $\tau$.

We define a new graph induced from $G_{\text{new}}$, denoted as $G'$,
by omitting the $0-$cell $i_{\ell}$. Thus $S_{0}\left(G'\right)=S_{0}\left(G_{\text{new}}\right)\backslash\left\{ i_{\ell}\right\} $
and $S_{d}\left(G'\right)=S_{d}\left(G_{\text{new}}\right)\backslash\left\{ \tau_{\ell}\right\} .$
Moreover, for any $\tau\in S_{d}\left(G'\right)$, define 
\begin{equation}
\mathcal{N}_{G'}\left(\tau\right):=\mathcal{N}_{G_{\text{new}}}\left(\tau\right).\label{eq:12.7}
\end{equation}
For any $n\geq m>d$
\begin{alignat}{1}
\left|\left[n\right]^{m}\right| & =\frac{n!}{\left(n-m\right)!}=\left(n-m+1\right)\frac{n!}{\left(n-m+1\right)!}=\left(n-m+1\right)\left|\left[n\right]^{m-1}\right|\nonumber \\
 & <\left(n-d\right)\left|\left[n\right]^{m-1}\right|.\label{eq:12.6}
\end{alignat}
By the induction hypothesis $\left|S_{0}\left(G_{\text{new}}\right)\right|=\left|S_{0}\left(G\right)\right|\geq d+1$
and $\left|S_{0}\left(G'\right)\right|=\left|S_{0}\left(G_{\text{new}}\right)\right|-1$,
thus we may conclude from \eqref{eq:12.6} and \eqref{eq:12.4} that
$\mathcal{G}\left(G_{u};\mathbf{N}_{G_{u}}\right)$ is bounded from
above by
\begin{equation}
\sum_{\mathbf{v}\in[n]^{|S_{0}\left(G'\right)|}}\left(\sum_{i\notin\mathbf{v}\left(\sigma_{i_{\ell}}\right)}b_{i\cup\mathbf{v}\left(\sigma_{i_{\ell}}\right)}^{\left(\mathcal{M}\left(\tau_{s}\right)\right)}\right)^{q_{s}}\left(\sum_{i\notin\mathbf{v}\left(\sigma_{i_{\ell}}\right)}b_{i\cup\mathbf{v}\left(\sigma_{i_{\ell}}\right)}^{\left(\mathcal{N}_{\ensuremath{G_{\text{new}}}}\left(\tau_{\ell}\right)\right)}\right)\prod_{\tau\in S_{d}\left(G'\right)}b_{\mathbf{v}\left(\tau\right)}^{\left(\mathcal{N}_{G'}\left(\tau\right)\right)},\label{eq:12.5}
\end{equation}
where $\mathbf{v}\left(\sigma_{i_{\ell}}\right)$ is well defined
since $i_{\ell}\notin\sigma_{i_{\ell}}\subseteq S_{0}\left(G'\right)$.
Since,
\begin{alignat*}{1}
\eqref{eq:12.5} & =\sum_{\mathbf{v}\in[n]^{|S_{0}\left(G\right)|}}\Bigg(\Bigg(\sum_{i\notin\mathbf{v}\left(\sigma_{i_{\ell}}\right)}b_{i\cup\mathbf{v}\left(\sigma_{i_{\ell}}\right)}^{_{\left(\mathcal{M}\left(\tau_{s}\right)\right)}}\Bigg)^{q'_{s}}\!\!\!\prod_{\tau\in S_{d}\left(G'\right)}b_{\mathbf{v}\left(\tau\right)}^{_{\left(\mathcal{N}_{G'}\left(\tau\right)\right)}}\Bigg)^{\frac{q_{s}}{q'_{s}}}\Bigg(\Bigg(\sum_{i\notin\mathbf{v}\left(\sigma_{i_{\ell}}\right)}b_{i\cup\mathbf{v}\left(\sigma_{i_{\ell}}\right)}^{_{\left(\mathcal{N}_{\ensuremath{G_{\text{new}}}}\left(\tau_{\ell}\right)\right)}}\Bigg)^{q_{\ell}}\!\!\!\prod_{\tau\in S_{d}\left(G'\right)}b_{\mathbf{v}\left(\tau\right)}^{_{\left(\mathcal{N}_{G'}\left(\tau\right)\right)}}\Bigg)^{\frac{1}{q_{\ell}}},
\end{alignat*}
where $q'_{s}=\sum_{\tau\in S'\cup\left\{ \tau_{\ell}\right\} }\frac{p_{\tau_{s}}}{p_{\tau}}$,
$q_{\ell}=\sum_{\tau\in S'\cup\left\{ \tau_{\ell}\right\} }\frac{p_{\tau_{\ell}}}{p_{\tau}}$,
and one can readily verify that $\frac{q_{s}}{q'_{s}}+\frac{1}{q_{\ell}}=1$,it
follows from Hölder's inequality that \eqref{eq:12.5} is bounded
from above by
\begin{gather}
\Bigg[\sum_{_{\mathbf{v}\in[n]^{|S_{0}\left(G\right)|}}}\Bigg(\sum_{_{i\notin\mathbf{v}\left(\sigma_{i_{\ell}}\right)}}b_{i\cup\mathbf{v}\left(\sigma_{i_{\ell}}\right)}^{_{\left(\mathcal{M}\left(\tau_{s}\right)\right)}}\Bigg)^{q'_{s}}\!\!\!\prod_{_{\tau\in S_{d}\left(G'\right)}}\!\!b_{\mathbf{v}\left(\tau\right)}^{_{\left(\mathcal{N}_{G'}\left(\tau\right)\right)}}\Bigg]^{\frac{q_{s}}{q'_{s}}}\Bigg[\Bigg(\sum_{_{\mathbf{v}\in[n]^{|S_{0}\left(G\right)|}}}\Bigg(\sum_{_{i\notin\mathbf{v}\left(\sigma_{i_{\ell}}\right)}}b_{i\cup\mathbf{v}\left(\sigma_{i_{\ell}}\right)}^{_{\left(\mathcal{N}_{\ensuremath{G_{\text{new}}}}\left(\tau_{\ell}\right)\right)}}\Bigg)^{q_{\ell}}\!\!\!\prod_{_{\tau\in S_{d}\left(G'\right)}}\!\!b_{\mathbf{v}\left(\tau\right)}^{_{\left(\mathcal{N}_{G'}\left(\tau\right)\right)}}\Bigg)\Bigg]^{\frac{1}{q_{\ell}}},\label{eq:12.3}
\end{gather}
with the same degree of homogeneity in each variable $\left\{ b^{\left(\mathcal{N}\left(\tau\right)\right)}\right\} $
as of those in term \eqref{eq:12.5}.

Denoting $\tau_{r+1}:=\tau_{\ell}$ and $S=S'\cup\left\{ \tau_{r+1}\right\} $,
and replacing every term in the induction hypothesis \eqref{eq:12}
by the upper bound in \eqref{eq:12.3}, gives 
\begin{equation}
\mathcal{G}\left(G_{u};\mathbf{N}_{G_{u}}\right)\leq\prod_{s=1}^{\left|S\right|}\left[\sum_{\mathbf{v}\in[n]^{|S_{0}\left(G'\right)|}}\left(\sum_{i\notin\mathbf{v}\left(\sigma_{i_{\ell}}\right)}b_{i\cup\mathbf{v}\left(\sigma_{i_{\ell}}\right)}^{\left(\mathcal{M}\left(\tau_{s}\right)\right)}\right)^{\widehat{q_{s}}}\prod_{\tau\in S_{d}\left(G'\right)}b_{\mathbf{v}\left(\tau\right)}^{\left(\mathcal{N}_{G'}\left(\tau\right)\right)}\right]^{\frac{1}{\widehat{\alpha_{s}}}},\label{eq:13}
\end{equation}
where 
\[
\widehat{q_{s}}:=\begin{cases}
q'_{s} & \text{for \ensuremath{1\leq s\leq r}}\\
q_{\ell} & \text{for \ensuremath{s=r+1}}
\end{cases},\qquad\mathcal{M}\left(\tau_{s}\right):=\begin{cases}
\mathcal{M}\left(\tau_{s}\right) & \text{for \ensuremath{1\leq s\leq r}}\\
\mathcal{N}_{\ensuremath{G_{\text{new}}}}\left(\tau_{\ell}\right) & \text{for \ensuremath{s=r+1}}
\end{cases},\qquad\widehat{\alpha_{s}}:=\begin{cases}
\frac{\alpha_{s}q'_{s}}{q_{s}} & \text{for \ensuremath{1\leq s\leq r}}\\
\frac{q_{\ell}}{\sum_{j=1}^{r}\frac{1}{\alpha_{s}}} & \text{for \ensuremath{s=r+1}}
\end{cases}.
\]

Following the above construction, it is clear that all conditions
besides 2(d) are satisfied by the relative parameters in \eqref{eq:13},
where for 2(d) we observe that by the induction hypothesis
\begin{alignat*}{1}
\sum_{\tau\in S_{d}\left(G_{\mathbf{u}}\right)}\mathcal{N}_{G_{\mathbf{u}}}\left(\tau\right) & =\sum_{s=1}^{r}\mathcal{M}\left(\tau_{s}\right)+\sum_{\tau\in S_{d}\left(G\right)}\mathcal{N}_{G}\left(\tau\right)\\
 & \overset{\left(1\right)}{=}\sum_{s=1}^{r}\mathcal{M}\left(\tau_{s}\right)+\sum_{\tau\in S_{d}\left(G_{\text{new}}\right)}\mathcal{N}_{G_{\text{new}}}\left(\tau\right)\\
 & =\sum_{s=1}^{r}\mathcal{M}\left(\tau_{s}\right)+\mathcal{N}_{G_{\text{new}}}\left(\tau_{\ell}\right)+\sum_{\tau\in S_{d}\left(G_{\text{new}}\right)\backslash\left\{ \tau_{\ell}\right\} }\mathcal{N}_{G_{\text{new}}}\left(\tau\right)\\
 & \overset{\left(2\right)}{=}\sum_{s=1}^{r+1}\mathcal{M}\left(\tau_{s}\right)+\sum_{\tau\in S_{d}\left(G'\right)}\mathcal{N}_{G'}\left(\tau\right),
\end{alignat*}
where $\left(1\right)$ follows by the construction of $G_{\text{new}}$
using the function $f$ and $\left(2\right)$ follows from our definition
in \eqref{eq:12.7}. This proves the induction step, $r\to r+1$.

The above induction guarantees the validity of the induction hypothesis
for $r=\left|S_{0}\left(G_{u}\right)\right|-d$ , that is, we have
proved the following bound

\begin{gather*}
\mathcal{G}\left(G_{u};\mathbf{N}_{G_{u}}\right)\leq\prod_{s=1}^{\left|S\right|}\left[\sum_{\mathbf{v}\in[n]^{d}}\left(\sum_{i\notin\mathbf{v}\left(\sigma_{0}\right)}b_{i\cup\mathbf{v}\left(\sigma_{0}\right)}^{\left(\mathcal{M}\left(\tau_{s}\right)\right)}\right)^{\sum_{\tau\in S_{G_{u}}}\frac{p_{\tau_{s}}}{p_{\tau}}}\right]^{\frac{1}{\alpha_{s}}},
\end{gather*}
where $\sigma_{0}=\left[1,2,\ldots,d\right]\in X_{\pm}^{d-1}$ and
$S\subseteq S_{d}\left(G_{u}\right)$ with $\left|S\right|=\left|S_{0}\left(G_{u}\right)\right|-d$
is the set of $d$-cells removed in the process. Recall that we assume
$\sum_{\tau\in S}\frac{1}{p_{\tau}}=1$, hence we conclude that $\mathcal{G}\left(G_{u};\mathbf{N}_{G_{u}}\right)$
is bounded from above by 
\begin{equation}
\prod_{s=1}^{\left|S\right|}\left[\sum_{\mathbf{v}\in[n]^{d}}\left(\sum_{i\notin\mathbf{v}\left(\sigma_{0}\right)}b_{i\cup\mathbf{v}\left(\sigma_{0}\right)}^{\left(\mathcal{M}\left(\tau_{s}\right)\right)}\right)^{p_{\tau_{s}}}\right]^{\frac{1}{\alpha_{s}}}.\label{eq:4}
\end{equation}
The induction argument guarantees that the term in equation \eqref{eq:4}
is $1$-homogeneous in all variables $b^{\left(\mathcal{M}\left(\tau_{s}\right)\right)}$.
It must therefore necessarily be the case that $\alpha_{s}=p_{\tau_{s}}$.
Consequently
\begin{alignat*}{1}
\mathcal{G}\left(G_{u};\mathbf{N}_{G_{u}}\right) & \leq\prod_{s=1}^{\left|S\right|}\left[\sum_{\mathbf{v}\in[n]^{d}}\left(\sum_{i\notin\mathbf{v}\left(\sigma_{0}\right)}b_{i\cup\mathbf{v}\left(\sigma_{0}\right)}^{\left(\mathcal{M}\left(\tau_{s}\right)\right)}\right)^{p_{\tau_{s}}}\right]^{\frac{1}{p_{\tau_{s}}}}\\
 & \overset{\left(1\right)}{=}\prod_{\tau\in S}\left[d!\sum_{\sigma\in X_{+}^{d-1}}\left(\sum_{i\in\left[n\right],i\notin\sigma}b_{\sigma\omega^{\left(\sigma,i\right)}}^{\left(\mathcal{M}\left(\tau\right)\right)}\right)^{p_{\tau}}\right]^{\frac{1}{p_{\tau}}}\\
 & \overset{\left(2\right)}{=}d!\prod_{\tau\in S}\left[\sum_{\sigma\in X_{+}^{d-1}}\left(\sum_{i\in\left[n\right],i\notin\sigma}b_{\sigma\omega^{\left(\sigma,i\right)}}^{\left(\mathcal{M}\left(\tau\right)\right)}\right)^{p_{\tau}}\right]^{\frac{1}{p_{\tau}}},
\end{alignat*}
where $\omega^{\left(\sigma,i\right)}\in\Sigma_{\sigma,i}$ , $\left(1\right)$
follows from the same arguments which were stated in the initial step
and $\left(2\right)$ follows because $\sum_{\tau\in S}\frac{1}{p_{\tau}}=1$.
This concludes the proof of Lemma \ref{Lem:lemma_2}.
\end{itemize}
\end{proof}

\subsubsection{Back to the proof of Theorem \ref{thm:main_result}}

Let $w\in\mathcal{W}_{2k+1}$ and $\mathbf{N}_{G_{w}}=\left(N_{w}\left(e\right)\right)_{e\in E_{w}}$.
This choice of labeling implies that $\mathcal{N}_{G_{w}}\left(\tau\right)=N_{w}\left(\tau\right)$
for all $\tau\in\text{supp}_{d}\left(w\right).$ Let $T$ be the graph
obtained from Lemma \ref{lem:Lemma1}, and apply Lemma \ref{Lem:lemma_2}
to it and its labeling $\left(\mathcal{N}_{T}\left(\tau\right)\right)_{\tau\in S_{d}\left(T\right)}$,
using $p_{\tau}=\frac{2k}{\mathcal{M}\left(\tau\right)}$ for $\tau\in S$,
where $\left(\mathcal{M}\left(\tau\right)\right)_{\tau\in S}$ is
the vector from Lemma \ref{Lem:lemma_2}. Note that this is a valid
choice for $\left(p_{\tau}\right)_{\tau\in S}$ since: 
\begin{alignat*}{1}
\sum_{\tau\in S_{T}}\frac{1}{p_{\tau}} & =\frac{\sum_{\tau\in S}\mathcal{M}\left(\tau\right)}{2k}\overset{_{(1)}}{=}\frac{\sum_{\tau\in S_{d}\left(T\right)}\mathcal{N}_{T}\left(\tau\right)}{2k}\overset{_{(2)}}{=}\frac{\sum_{\tau\in S_{d}\left(G_{w}\right)}\mathcal{N}_{G_{w}}\left(\tau\right)}{2k}=\frac{\sum_{\tau\in S_{d}\left(G_{w}\right)}N_{w}\left(\tau\right)}{2k}\overset{_{(3)}}{=}\frac{2k}{2k}=1,
\end{alignat*}
where $(1)$ follows from Lemma \ref{Lem:lemma_2}, $(2)$ follows
from Lemma \ref{lem:Lemma1}, $(3)$ follows since $w\in\mathcal{W}_{2k+1}$.

Hence, by Lemma \ref{Lem:lemma_2}
\begin{equation}
\mathcal{G}\left(T;\mathbf{N}_{T}\right)\leq d!\cdot\prod_{\tau\in S}\left[\sum_{\sigma\in X_{+}^{d-1}}\left(\sum_{i\in\left[n\right],i\notin\sigma}b_{\sigma\omega^{\left(\sigma,i\right)}}^{\left(\mathcal{M}\left(\tau\right)\right)}\right)^{\frac{2k}{\mathcal{M}\left(\tau\right)}}\right]^{\frac{\mathcal{M}\left(\tau\right)}{2k}}.\label{eq:50}
\end{equation}
Recall that for $k\in\mathbb{N}$, we defined
\[
\theta_{k}:=\sqrt{\frac{n-d}{n}}{n \choose d}^{\frac{1}{2k}}\qquad,\qquad\theta_{k}^{*}:=\left\Vert Z_{\tau}-\mathbb{E}\left[Z\right]\right\Vert _{\infty}\left({n \choose d}\cdot\frac{d\left(n-d\right)}{\left(n\text{Var}\left(Z\right)\right)^{k}}\right)^{\frac{1}{2k}},
\]
which one can verify equal to 
\[
\theta_{k}:=\left(\sum_{\sigma\in X_{+}^{d-1}}\left(\sum_{i\in\left[n\right],i\notin\sigma}\mathbb{E}\left[\left|H_{\sigma\omega^{\left(\sigma,i\right)}}\right|^{2}\right]\right)^{k}\right)^{\frac{1}{2k}}\qquad\text{and}\qquad\text{ }\theta_{k}^{*}:=\left(\sum_{\sigma,\sigma'\in X_{+}^{d-1}}\left\Vert H_{\sigma\sigma'}\right\Vert _{\infty}^{2k}\right)^{\frac{1}{2k}},
\]
where $\omega^{\left(i\right)}\in\Sigma_{\sigma,i}$ is arbitrary.
Observe the product term on \eqref{eq:50} and note that
\begin{alignat}{1}
\sum_{\sigma\in X_{+}^{d-1}}\left(\sum_{i\in\left[n\right],i\notin\sigma}b_{\sigma\omega^{\left(\sigma,i\right)}}^{\left(\mathcal{M}\left(\tau\right)\right)}\right)^{\frac{2k}{\mathcal{M}\left(\tau\right)}} & =\sum_{\sigma\in X_{+}^{d-1}}\left(\sum_{i\in\left[n\right],i\notin\sigma}\mathbb{E}\left[\left|B_{\sigma\omega^{\left(\sigma,i\right)}}\right|^{\mathcal{M}\left(\tau\right)}\right]\right)^{\frac{2k}{\mathcal{M}\left(\tau\right)}}.\label{eq:41}
\end{alignat}
Since Lemma \ref{Lem:lemma_2}, Lemma \ref{lem:Lemma1} and the choice
of $\mathbf{N}_{G_{w}}$ together with \eqref{eq:0} implies 
\begin{gather*}
\mathcal{M}\left(\tau\right)\geq\mathcal{N}_{T}\left(\tau\right)\geq\mathcal{N}_{G_{w}}\left(\tau\right)=N_{w}\left(\tau\right)\geq2,
\end{gather*}
it follows that
\begin{alignat*}{1}
\text{LHS of \eqref{eq:41}} & =\sum_{\sigma\in X_{+}^{d-1}}\left(\sum_{i\in\left[n\right],i\notin\sigma}\mathbb{E}\left[\left|B_{\sigma\omega^{\left(\sigma,i\right)}}\right|^{2}\left|B_{\sigma\omega^{\left(\sigma,i\right)}}\right|^{\mathcal{M}\left(\tau\right)-2}\right]\right)^{\frac{2k}{\mathcal{M}\left(\tau\right)}}\\
 & \leq\sum_{\sigma\in X_{+}^{d-1}}\left[\left(\sum_{i\in\left[n\right],i\notin\sigma}\mathbb{E}\left[\left|B_{\sigma\omega^{\left(\sigma,i\right)}}\right|^{2}\right]\right)^{\frac{2k}{\mathcal{M}\left(\tau\right)}}\cdot\max_{\sigma'\in X_{+}^{d-1}}\left\Vert B_{\sigma\sigma'}\right\Vert _{\infty}^{\frac{2k\left(\mathcal{M}\left(\tau\right)-2\right)}{\mathcal{M}\left(\tau\right)}}\right].
\end{alignat*}
Applying Hölder's inequality with $\frac{2}{\mathcal{M}\left(\tau\right)}+\frac{\mathcal{M}\left(\tau\right)-2}{\mathcal{M}\left(\tau\right)}=1$,
the last expression is bounded from above by 
\begin{alignat*}{1}
 & \left(\sum_{\sigma\in X_{+}^{d-1}}\left(\sum_{i\in\left[n\right],i\notin\sigma}\mathbb{E}\left[\left|B_{\sigma\omega^{\left(\sigma,i\right)}}\right|^{2}\right]\right)^{k}\right)^{\frac{2}{\mathcal{M}\left(\tau\right)}}\cdot\left(\sum_{\sigma\in X_{+}^{d-1}}\max_{\sigma'\in X_{+}^{d-1}}\left\Vert B_{\sigma\sigma'}\right\Vert _{\infty}^{2k}\right)^{\frac{\mathcal{M}\left(\tau\right)-2}{\mathcal{M}\left(\tau\right)}}\\
 & \leq\left(\sum_{\sigma\in X_{+}^{d-1}}\left(\sum_{i\in\left[n\right],i\notin\sigma}\mathbb{E}\left[\left|B_{\sigma\omega^{\left(\sigma,i\right)}}\right|^{2}\right]\right)^{k}\right)^{\frac{2}{\mathcal{M}\left(\tau\right)}}\cdot\left(\sum_{\sigma,\sigma'\in X_{+}^{d-1}}\left\Vert B_{\sigma\sigma'}\right\Vert _{\infty}^{2k}\right)^{\frac{\mathcal{M}\left(\tau\right)-2}{\mathcal{M}\left(\tau\right)}}\\
 & =\left(n\text{Var}\left(Z\right)\right)^{k}\theta_{k}^{\frac{4k}{\mathcal{M}\left(\tau\right)}}\cdot\left(\theta_{k}^{*}\right)^{\frac{2k\left(\mathcal{M}\left(\tau\right)-2\right)}{\mathcal{M}\left(\tau\right)}}.
\end{alignat*}
Combining all of the above gives
\begin{alignat}{1}
\mathcal{G}\left(T;\mathbf{N}_{T}\right) & \leq d!\cdot\prod_{\tau\in S}\left[\left(n\text{Var}\left(Z\right)\right)^{k}\theta_{k}^{\frac{4k}{\mathcal{M}\left(\tau\right)}}\cdot\left(\theta_{k}^{*}\right)^{\frac{2k\left(\mathcal{M}\left(\tau\right)-2\right)}{\mathcal{M}\left(\tau\right)}}\right]^{\frac{\mathcal{M}\left(\tau\right)}{2k}}\nonumber \\
 & =d!\cdot\left(n\text{Var}\left(Z\right)\right)^{\frac{\sum_{\tau\in S}\mathcal{M}\left(\tau\right)}{2}}\theta_{k}^{2\left|S\right|}\cdot\left(\theta_{k}^{*}\right)^{\left(\sum_{\tau\in S}\mathcal{M}\left(\tau\right)\right)-2\left|S\right|}.\label{eq:51}
\end{alignat}
We turn to estimate the powers in \eqref{eq:51}. Using once more
Lemma \ref{Lem:lemma_2}, Lemma \ref{lem:Lemma1}, the choice of $\mathbf{N}_{G_{w}}$
and that $w\in\mathcal{W}_{2k+1}$ gives 
\begin{alignat*}{1}
\sum_{\tau\in S}\mathcal{M}\left(\tau\right) & =\sum_{\tau\in S_{d}\left(T\right)}\mathcal{N}_{T}\left(\tau\right)=\sum_{\tau\in S_{d}\left(G_{w}\right)}\mathcal{N}_{G_{w}}\left(\tau\right)=\sum_{\tau\in\text{supp}_{d}\left(w\right)}N_{w}\left(\tau\right)=2k
\end{alignat*}
and
\[
\left|S\right|=\left|S_{0}\left(T\right)\right|-d=\left|S_{0}\left(G_{w}\right)\right|-d.
\]
Hence, by Lemma \ref{lem:Lemma1}
\[
\mathcal{G}\left(G_{w};\mathbf{N}_{G_{w}}\right)\leq\mathcal{G}\left(T;\mathbf{N}_{T}\right)\leq d!\cdot\left(n\text{Var}\left(Z\right)\right)^{k}\cdot\theta_{k}^{2\left(\left|S_{0}\left(G_{w}\right)\right|-d\right)}\cdot\left(\theta_{k}^{*}\right)^{2k-2\left(\left|S_{0}\left(G_{w}\right)\right|-d\right)}.
\]
Using \eqref{eq:2}, we conclude
\begin{alignat*}{1}
\mathbb{E}\left[\left\Vert H\right\Vert _{S_{2k}}^{2k}\right] & \leq d!\cdot\sum_{w\in\mathcal{W}_{2k+1}}\theta_{k}^{2\left(\left|S_{0}\left(G_{w}\right)\right|-d\right)}\cdot\left(\theta_{k}^{*}\right)^{2k-2\left(\left|S_{0}\left(G_{w}\right)\right|-d\right)}.
\end{alignat*}
By rescaling the matrix $H$, we may assume without loss of generality
that $\theta_{k}^{*}=1$. Consequently,
\begin{alignat}{1}
\mathbb{E}\left[\left\Vert H\right\Vert _{S_{2k}}^{2k}\right] & \leq d!\sum_{w\in\mathcal{W}_{2k+1}}\theta_{k}^{2\left(\left|S_{0}\left(G_{w}\right)\right|-d\right)}=d!\sum_{w\in\mathcal{W}_{2k+1}}\theta_{k}^{2\left(\left|\text{supp}_{0}\left(w\right)\right|-d\right)}\label{eq:5}
\end{alignat}

We will now show that the right hand side of \eqref{eq:5} is bounded
from above by a function depending on $\mathbb{E}\left[\left\Vert Y\right\Vert _{S_{2k}}^{2k}\right]$,
where $Y$ is the matrix from Section \ref{sec:Norm-bounds-for} with
$p_{0}=\frac{1}{4}$. As we have already shown
\begin{alignat}{1}
\mathbb{E}\left[\left\Vert Y\right\Vert _{S_{2k}}^{2k}\right] & =\sum_{w\in\mathcal{W}_{2k+1}}\sum_{u\sim w}\prod_{\tau\in\text{supp}_{d}\left(u\right)}\mathbb{E}\left[Y_{\tau}^{N_{u}\left(\tau\right)}\right]\nonumber \\
 & \overset{\left(1\right)}{\geq}\sum_{w\in\mathcal{W}_{2k+1}}\sum_{u\sim w}1\nonumber \\
 & =\sum_{w\in\mathcal{W}_{2k+1}}\#\left\{ u\text{ ; \ensuremath{u} is a word with \ensuremath{u\sim w}}\right\} ,\label{eq:45}
\end{alignat}
where $\left(1\right)$ follows since $\mathbb{E}\left[Y_{\tau}^{m}\right]\geq1$
for all $m\geq2$, and $\mathbb{E}\left[Y_{\tau}^{m}\right]=0$ for
$m=1$. We turn to estimate the sum in \eqref{eq:45} showing that
\[
\#\left\{ u\text{ a word ; \ensuremath{u\sim w}}\right\} \geq\frac{\left(r-d+1\right)!}{\left(r-\left|\text{supp}_{0}\left(w\right)\right|\right)!}.
\]
Indeed, fix $w\in\mathcal{W}_{2k+1}$, with $w=\sigma_{1}\cdots\sigma_{2k}\sigma_{1}$,
and $\sigma_{1}=\left[\sigma_{1}^{0},\ldots,\sigma_{1}^{d-1}\right]$.
Define a set of permutations on $\left[r\right]$, denoted $\Psi$,
for which each permutation $\pi$ fixes $\left(\sigma_{1}^{i}\right)_{i=1}^{d-1}$,
takes $\sigma_{1}^{0}$ to some number in the set $\left[r\right]\backslash\left\{ \sigma_{1}^{j}\right\} _{j=1}^{d-1}$,
and each new appearance of $0-$cell, takes to some number in $\left[r\right]$
which did not appear earlier. Each permutation in the above set, induces
a new word which is equivalent to $w$. Observe that each choice on
the image of $\sigma_{1}^{0}$ defines a different word, since the
image of $\sigma_{1}$ would be different (because $\left(\sigma_{1}^{i}\right)_{i=1}^{d-1}$
are fixed). Consequently, 
\[
\#\left\{ u\text{ a word ; \ensuremath{u\sim w}}\right\} \geq\left|\Psi\right|=\left(r-\left(d-1\right)\right)\left(r-d\right)\cdots\left(r-\left(\text{supp}_{0}\left(w\right)-1\right)\right)=\frac{\left(r-d+1\right)!}{\left(r-\left|\text{supp}_{0}\left(w\right)\right|\right)!}.
\]
Using the above estimation in \eqref{eq:45} gives
\begin{equation}
\mathbb{E}\left[\left\Vert Y\right\Vert _{S_{2k}}^{2k}\right]\geq\sum_{w\in\mathcal{W}_{2k+1}}\frac{\left(r-d+1\right)!}{\left(r-\left|\text{supp}_{0}\left(w\right)\right|\right)!}.\label{eq:46}
\end{equation}
From \eqref{eq:0} we know that $\left|\text{supp}_{d}\left(w\right)\right|<k+1$
and since $\left|\text{supp}_{0}\left(w\right)\right|\le\left|\text{supp}_{d}\left(w\right)\right|+d$,
we obtain $\left|\text{supp}_{0}\left(w\right)\right|<k+d+1.$ Set
$r=\left[\theta_{k}^{2}\right]+k+d+1$. For $\ell_{0}:=r-d+1$ and
$m_{0}:=\left|\text{supp}_{0}\left(w\right)\right|-d+1$, we observe
that $\left|\text{supp}_{0}\left(w\right)\right|<k+d+1\leq r$ implies
$m_{0}<\ell_{0}$. As $\frac{\left(\ell-1\right)!}{\left(\ell-m\right)!}\geq\left(\ell-m+1\right)^{m-1}$
for any $\ell\geq m$ and thus
\begin{alignat*}{1}
\frac{\left(r-d\right)!}{\left(r-\left|\text{supp}_{0}\left(w\right)\right|\right)!} & \geq\left(r-\left|\text{supp}_{0}\left(w\right)\right|+1\right)^{\left|\text{supp}_{0}\left(w\right)\right|-d}\\
 & =\left(\left[\theta_{k}^{2}\right]+k+d+1-\left|\text{supp}_{0}\left(w\right)\right|+1\right)^{\left|\text{supp}_{0}\left(w\right)\right|-d}\\
 & \geq\left(\left[\theta_{k}^{2}\right]+1\right)^{\left|\text{supp}_{0}\left(w\right)\right|-d}\\
 & \geq\theta_{k}^{2\left(\left|\text{supp}_{0}\left(w\right)\right|-d\right)}.
\end{alignat*}
The last bound, when applied to \eqref{eq:46} yields
\[
\mathbb{E}\left[\left\Vert Y\right\Vert _{S_{2k}}^{2k}\right]\geq\left(r-d+1\right)\sum_{w\in\mathcal{W}_{2k+1}}\theta_{k}^{2\left(\left|\text{supp}_{0}\left(w\right)\right|-d\right)},
\]
which then by \eqref{eq:5} gives 
\[
\frac{1}{d!}\mathbb{E}\left[\left\Vert H\right\Vert _{S_{2k}}^{2k}\right]\leq\frac{1}{\left(r-d+1\right)}\mathbb{E}\left[\left\Vert Y\right\Vert _{S_{2k}}^{2k}\right],
\]
and hence
\[
\mathbb{E}\left[\left\Vert H\right\Vert _{S_{2k}}^{2k}\right]\leq\frac{d!}{r-d+1}\mathbb{E}\left[\left\Vert Y\right\Vert _{S_{2k}}^{2k}\right]\leq\frac{{r \choose d}d!}{r-d+1}\mathbb{E}\left[\left\Vert Y\right\Vert _{2}^{2k}\right].
\]
 Applying Proposition \eqref{prop:first_Bound_on_Y} and Proposition
\eqref{propr:Second_estimation_on_Y} to the matrix $Y$, we conclude
that 
\begin{alignat*}{1}
\sqrt[2k]{\mathbb{E}\left[\left\Vert H\right\Vert _{S_{2k}}^{2k}\right]} & \leq\sqrt[2k]{\frac{d!}{r-d+1}{r \choose d}}\left(2\sqrt{dr}+C_{d}r^{1/3}\log^{2/3}r+c_{d}\sqrt{k}\right)\\
 & \leq\sqrt[2k]{\frac{d!}{1-\frac{d-1}{r}}}\left(2\sqrt{d}\left(\sqrt{r}\right)^{1+\frac{d-1}{k}}+C_{d}\left(\sqrt{r}\right)^{\frac{2}{3}+\frac{d-1}{k}}\log^{2/3}r+c_{d}\left(\sqrt{r}\right)^{\frac{d-1}{k}}\sqrt{k}\right),
\end{alignat*}
where $C_{d}$ and $c_{d}$ are positive constants depending only
on $d$.

Because $r=\left[\theta_{k}^{2}\right]+k+d+1$, it follows that $\sqrt{r}\leq\theta_{k}+\sqrt{k+d}$
, which together with the assumption $k\geq d$ and the choice of
normalization $\theta_{k}^{*}=1$ gives 
\begin{equation}
\sqrt[2k]{\mathbb{E}\left[\left\Vert H\right\Vert _{S_{2k}}^{2k}\right]}\leq\Phi\left(\theta_{k},\theta_{k}^{*}\right).\label{eq:4.25}
\end{equation}
This concludes the proof of Theorem \eqref{thm:main_result}.\hfill\qed

\section{The asymptotic behavior of the norm of $H$}

\subsection{Proof of Corollary \ref{cor:Norm_of_H}}

For every $C>0$ and $k=k\left(n\right)\geq C\log\left(n\right)$,
one can readily verify that for all $n$ large enough (depending only
on $C$ and $d$) $\theta_{k}\leq\sqrt{k}$. Thus, using \eqref{eq:4.25}
we obtain that for any $C>0$, all large enough $n$ and any integer
$k=k\left(n\right)$ such that $k\geq C\log\left(n\right)$ 
\begin{equation}
\sqrt[2k]{\mathbb{E}\left[\left\Vert H\right\Vert _{S_{2k}}^{2k}\right]}\leq\sqrt[2k]{d!d}\cdot\left(2\theta_{k}^{*}\sqrt{d}\left(\frac{\theta_{k}}{\theta_{k}^{*}}+2\sqrt{k}\right)^{1+\frac{d-1}{k}}+C_{d}\theta_{k}^{*}\left(\sqrt{k}\right)^{1+\frac{d-1}{k}}\right).\label{eq:4.24}
\end{equation}
We now turn to show that for an appropriate choice of $k:=k\left(n\right)$
growing to infinity with $n$
\[
\limsup_{n\to\infty}\theta_{k}^{*}\left(\frac{\theta_{k}}{\theta_{k}^{*}}+2\sqrt{k}\right)^{1+\frac{d-1}{k}}\leq1\qquad\text{and}\qquad\lim_{n\to\infty}\theta_{k}^{*}\left(\sqrt{k}\right)^{1+\frac{d-1}{k}}=0,
\]
thus proving that
\begin{equation}
\limsup_{n\to\infty}\sqrt[2k]{\mathbb{E}\left[\left\Vert H\right\Vert _{S_{2k}}^{2k}\right]}\leq2\sqrt{d.}\label{eq:24}
\end{equation}

From the definition of $\theta_{k}$ and $\theta_{k}^{*}$ 
\[
\frac{\theta_{k}}{\theta_{k}^{*}}=\sqrt{\frac{n-d}{n}}\cdot\frac{1}{\left\Vert Z_{\tau}-\mathbb{E}\left[Z\right]\right\Vert _{\infty}}\left(\frac{\left(n\text{Var}\left(Z\right)\right)^{k}}{d\left(n-d\right)}\right)^{\frac{1}{2k}}.
\]
As for the first limit, observe that 
\begin{equation}
\theta_{k}^{*}\left(\frac{\theta_{k}}{\theta_{k}^{*}}+2\sqrt{k}\right)^{1+\frac{d-1}{k}}\leq n^{\frac{d}{2k}}\left(1-\frac{d}{n}\right)^{\frac{1}{2k}}\left(\left(1-\frac{d}{n}\right)^{\frac{1}{2}-\frac{1}{2k}}+2\left(dn\right)^{\frac{1}{2k}}\left\Vert Z_{\tau}-\mathbb{E}\left[Z\right]\right\Vert _{\infty}\sqrt{\frac{k}{n\text{Var}\left(z\right)}}\right)^{1+\frac{d-1}{k}}.\label{eq:4.26}
\end{equation}
As for the second limit, note that 
\begin{equation}
\theta_{k}^{*}\left(\sqrt{k}\right)^{1+\frac{d-1}{k}}\leq n^{\frac{d+1}{2k}}\left\Vert Z_{\tau}-\mathbb{E}\left[Z\right]\right\Vert _{\infty}\sqrt{\frac{k}{n\text{Var}\left(Z\right)}}\left(d\left(1-\frac{d}{n}\right)\right)^{\frac{1}{2k}}k^{\frac{d-1}{2k}}\label{eq:4.27}
\end{equation}

Choose $k_{0}:=k_{0}\left(n\right)=\left\lceil \sqrt{n\text{Var}\left(Z\right)\log\left(n\right)}\right\rceil $.
Observe that $k_{0}\geq C\log\left(n\right)$ and thus \eqref{eq:4.24}
holds for all large enough $n$. Under the restriction $n\text{Var}\left(Z\right)\gg\log\left(n\right)$,
taking $n\to\infty$ we derive from \eqref{eq:4.26} $\limsup_{n\to\infty}\theta_{k}^{*}\left(\frac{\theta_{k}}{\theta_{k}^{*}}+2\sqrt{k}\right)^{1+\frac{d-1}{k}}\leq1,$
and from \eqref{eq:4.27} $\lim_{n\to\infty}\theta_{k}^{*}\left(\sqrt{k}\right)^{1+\frac{d-1}{k}}=0.$

In order to complete the proof, we further note that $\left\Vert H\right\Vert _{2}\leq\left\Vert H\right\Vert _{S_{2k}}$
and thus by Jensen's inequality
\[
\mathbb{E}\left[\left\Vert H\right\Vert _{2}\right]\leq\mathbb{E}\left[\left\Vert H\right\Vert _{2}^{2k}\right]^{\frac{1}{2k}}\leq\mathbb{E}\left[\left\Vert H\right\Vert _{S_{2k}}^{2k}\right]^{\frac{1}{2k}}.
\]

By taking $k_{0}$ as above we conclude that 
\begin{equation}
\limsup_{n\to\infty}\mathbb{E}\left[\left\Vert H\right\Vert _{2}\right]\leq\limsup_{n\to\infty}\sqrt[2k_{0}]{\mathbb{E}\left[\left\Vert H\right\Vert _{S_{2k_{0}}}^{2k_{0}}\right]}\underset{\text{by \eqref{eq:24}}}{\underbrace{\leq}}2\sqrt{d.}\label{eq:9}
\end{equation}
Note that under the assumption $n\text{Var}\left(Z\right)\gg\log\left(n\right)$
we have $n\text{Var}\left(Z\right)\underset{n\to\infty}{\longrightarrow}\infty$,
hence by \cite[Remark 5.2]{RK17}\footnote{This remark relates to \cite[Theorem 3.1]{RK17}, which is stated
for the matrix $\mathcal{A}$ and not $H$. However, the generalization
of Theorem $3.1$ for the matrix $H$ follows readily from the proof
presented in \cite{RK17}, and we will not present it here.} we have $\liminf_{n\to\infty}\left\Vert H\right\Vert _{2}\geq2\sqrt{d}$,
$\mathbb{P}-$almost surely. Using Fatou's lemma we deduce
\begin{equation}
\liminf_{n\to\infty}\mathbb{E}\left[\left\Vert H\right\Vert _{2}\right]\geq2\sqrt{d}.\label{eq:10}
\end{equation}
By \eqref{eq:9} and \eqref{eq:10} we obtain that under the assumption
$n\text{Var}\left(Z\right)\gg\log\left(n\right)$, we have
\[
\lim_{n\to\infty}\mathbb{E}\left[\left\Vert H\right\Vert _{2}\right]=2\sqrt{d},
\]
which concludes the proof of Corollary \ref{cor:Norm_of_H}.\hfill\qed

\subsection{Proof of Corollary \ref{cor:Norm_of_H_2}}

For a fixed $C\in(0,\infty)$ and for an integer $k\ge C\log\left(n\right)$,
we define the function $g_{k,n,Z}:[0,1]^{|K^{d}|}\to\mathbb{R}$ via
\[
g_{k,n,Z}((x_{\tau})_{\tau\in K^{d}})=\frac{\sqrt{n\text{Var}\left(Z\right)}}{e^{\frac{d}{2C}}\left(d+1\right)}\|B((x_{\tau})_{\tau\in K^{d}})\|_{S_{2k}},
\]
 where $B((x_{\tau})_{\tau\in K^{d}})$ is a $|K_{+}^{d-1}|\times|K_{+}^{d-1}$|
matrix defined by
\[
B((x_{\tau})_{\tau\in K^{d}})_{\sigma,\sigma'\in K_{+}^{d-1}}\equiv\begin{cases}
\frac{x_{\tau}-\mathbb{E}\left[Z\right]}{\sqrt{n\text{Var\ensuremath{\left(Z\right)}}}} & \text{if \ensuremath{\sigma\overset{K}{\sim}\sigma'} and \ensuremath{\sigma\cup\sigma'=\tau} }.\\
-\frac{x_{\tau}-\mathbb{E}\left[Z\right]}{\sqrt{n\text{Var\ensuremath{\left(Z\right)}}}} & \text{if \ensuremath{\sigma\overset{K}{\sim}\overline{\sigma'}} and \ensuremath{\sigma\cup\sigma'=\tau} }\\
0 & \text{otherwise}
\end{cases}
\]

Note that $H=B((Z_{\tau})_{\tau\in K^{d}})$. The inequality follows
from Talagrand\textquoteright s concentration inequality, c.f. \cite[Theorem 6.10]{BLM13},
using the fact that the function $g_{k,n,Z}$ is convex and 1-Lipschitz,
and thus for any $t>0$ 
\begin{alignat}{1}
 & \mathbb{P}\left(\left\Vert H\right\Vert _{S_{2k}}\geq\mathbb{E}\left[\left\Vert H\right\Vert _{S_{2k}}\right]+t\right)\nonumber \\
= & \mathbb{P}\left(g_{k,n,Z}\left(\left(Z_{\tau}\right)_{\tau\in X^{d}}\right)\geq\mathbb{E}\left[g_{k,n,Z}\left(\left(Z_{\tau}\right)_{\tau\in X^{d}}\right)\right]+\frac{\sqrt{n\text{Var}\left(Z\right)}}{e^{\frac{d}{2C}}\left(d+1\right)}t\right)\leq e^{-\beta_{d}n\text{Var}\left(Z\right)t^{2}},\label{eq:11}
\end{alignat}
where $\beta_{d}$ is a positive constant depending only on $d$ and
$C$.

We infer that for all $C>0$, all $n$ large enough (depending on
$d$ and $C$) and any integer function $k\left(n\right)$ satisfying
$k\left(n\right)\geq C\log$$\left(n\right)$
\begin{alignat}{1}
\mathbb{P}\left(\left\Vert H\right\Vert _{S_{2k\left(n\right)}}\geq\Phi\left(\sigma_{k\left(n\right)},\sigma_{k\left(n\right)}^{*}\right)+t\right) & \overset{\left(1\right)}{\leq}\mathbb{P}\left(\left\Vert H\right\Vert _{S_{2k\left(n\right)}}\geq\sqrt[2k\left(n\right)]{\mathbb{E}\left[\left\Vert H\right\Vert _{S_{2k\left(n\right)}}^{2k\left(n\right)}\right]}+t\right)\nonumber \\
 & \overset{\left(2\right)}{\leq}\mathbb{P}\left(\left\Vert H\right\Vert _{S_{2k\left(n\right)}}\geq\mathbb{E}\left[\left\Vert H\right\Vert _{S_{2k\left(n\right)}}\right]+t\right)\nonumber \\
 & \overset{\left(3\right)}{\leq}e^{-\beta_{d}n\text{Var}\left(Z\right)t^{2}},\label{eq:22}
\end{alignat}
where $\left(1\right)$ follows from equation \eqref{eq:4.25}, $\left(2\right)$
follows from Jensen's inequality and $\left(3\right)$ follows from
equation \eqref{eq:11}.

Fix $\varepsilon>0$. The proof of Corollary \ref{cor:Norm_of_H},
along with the assumption $n\text{Var}\left(Z\right)\gg\log\left(n\right)$
imply $\limsup_{n\to\infty}\Phi\left(\theta_{k_{0}},\theta_{k_{0}}^{*}\right)\leq2\sqrt{d}$,
for $k_{0}:=k_{0}\left(n\right)=\left\lceil \sqrt{n\text{Var}\left(Z\right)\log\left(n\right)}\right\rceil $.
Therefore there exists $N_{\varepsilon}\in\mathbb{N}$ such that $\Phi\left(\theta_{k_{0}},\theta_{k_{0}}^{*}\right)<2\sqrt{d}+\frac{1}{2}\varepsilon$,
for all $n>N_{\epsilon}$. The last bound, together with the upper
bound in \eqref{eq:22} and the fact that $\left\Vert H\right\Vert _{2}\leq\left\Vert H\right\Vert _{S_{2k}}$
for any $k\in\mathbb{N}$, imply that for all large enough $n$ (depending
on $\varepsilon$ and $d$ )
\begin{alignat}{1}
\text{ }\mathbb{P}\left(\left\Vert H\right\Vert _{2}\geq2\sqrt{d}+\varepsilon\right) & \leq\mathbb{P}\left(\left\Vert H\right\Vert _{S_{2k_{0}}}\geq2\sqrt{d}+\varepsilon\right)\nonumber \\
 & \leq\mathbb{P}\left(\left\Vert H\right\Vert _{S_{2k_{0}}}\geq\Phi\left(\theta_{k_{0}},\theta_{k_{0}}^{*}\right)+\frac{1}{2}\varepsilon\right)\nonumber \\
 & \leq e^{-\frac{1}{4}\beta_{d}n\text{Var}\left(Z\right)\varepsilon^{2}}.\label{eq:23}
\end{alignat}
Our postulation $n\text{Var}\left(Z\right)\gg\log\left(n\right)$
implies that all large enough $n$ (depending on $\beta_{d}$ and
$\varepsilon$ ) obeys
\[
\frac{8}{\beta_{d}\varepsilon^{2}}\log\left(n\right)<n\text{Var}\left(Z\right),
\]
and hence 
\[
\mathbb{P}\left(\left\Vert H\right\Vert _{2}\geq2\sqrt{d}+\varepsilon\right)\leq n^{-2}.
\]
By the Borel-Cantelli we have almost surely $\limsup_{n\to\infty}\left\Vert H\right\Vert _{2}\leq2\sqrt{d}+\epsilon$.
Since $\epsilon$ was arbitrary, we conclude that $\limsup_{n\to\infty}\left\Vert H\right\Vert _{2}\leq2\sqrt{d}$
almost surely. Since $\liminf_{n\to\infty}\left\Vert H\right\Vert \geq2\sqrt{d}$
almost surely (see \cite[Remark 5.2]{RK17}) it thus follows that
$\lim_{n\to\infty}\left\Vert H\right\Vert _{2}=2\sqrt{d}$ almost
surely, which concludes the proof of Corollary \ref{cor:Norm_of_H_2}.
\hfill\qed

\subsection{Proof of Theorem \ref{thm:eigenvalue_confinement}}

One can observe in the proof of \cite[Theorem 6.1]{RK17}, that the
result is valid whenever one replaces the function $\mathcal{E}\left(\xi\right)$
with any upper bound on $\mathbb{P}\left(\left\Vert H\right\Vert >2\sqrt{d}+\xi\right)$.
Using Corollary \ref{cor:Norm_of_H_2} together with the last observation,
we infer 
\begin{cor}
\label{cor:5.1}Assume $d\geq2$ and $nq\gg\log\left(n\right)$, then
for all large enough $n$ (depending on $d$)
\begin{enumerate}
\item For every $\xi>0$, the ${n-1 \choose d}$ smallest eigenvalues of
the matrix $A$ are within the interval $-pd+\sqrt{nq}\left[-2\sqrt{d}-\xi,2\sqrt{d}+\xi\right]$
with probability at least $1-e^{-\frac{1}{4}\beta_{d}nq\xi^{2}}$.
\item For every $\xi>0$ and $\xi'>0$, if $nq\geq\frac{d\left(2d+2\xi'\right)^{6}\log^{6}\left(n\right)}{n}$,
then the remaining ${n-1 \choose d-1}$ eigenvalues of $A$ are inside
the interval $nq+\left[-\Gamma\left(\xi,\xi',n\right),\Gamma\left(\xi,\xi',n\right)\right]$
with probability at least $1-e^{-\frac{1}{4}\beta_{d}nq\xi^{2}}-\mathscr{E}\left(\xi'\right)$,
where
\[
\Gamma\left(\xi,\xi',n\right)=pd+\frac{\left(2\sqrt{d}+\xi\right)^{2}\sqrt{nq}}{\sqrt{nq}-4\left(2\sqrt{d}+\xi\right)}+100d^{\frac{7}{2}}\left(d+\xi'\right)^{3}\sqrt{q}\log^{3}\left(n\right),
\]
and
\[
\mathscr{E}\left(\xi\right)=\frac{4e^{3}d^{\frac{5}{2}}}{\left(d-1\right)!}\exp\left(5\log\left(2d+2\xi\right)+5\log\left(\log\left(n\right)\right)-\xi\log\left(n\right)\right).
\]
\end{enumerate}
\end{cor}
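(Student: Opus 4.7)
The plan is to obtain Corollary \ref{cor:5.1} by a direct reduction to \cite[Theorem 6.1]{RK17}, exploiting the observation that the proof of the latter is \emph{modular} in its tail-bound input for $\|H\|_2$. Concretely, the function $\mathcal{E}(\xi)$ enters the proof of \cite[Theorem 6.1]{RK17} only through its role as an upper bound on $\mathbb{P}(\|H\|_2 > 2\sqrt{d}+\xi)$: every inequality that invokes it does so via a union bound with the bad event $\{\|H\|_2 > 2\sqrt{d}+\xi\}$, and no structural feature of $\mathcal{E}$ (its precise form, monotonicity, integrability, or decay rate) is used anywhere else in the argument.

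I would first revisit the argument of \cite[Theorem 6.1]{RK17}, which uses a block decomposition of $A$ coming from the simplicial structure: one block of dimension ${n-1 \choose d}$ carries the small eigenvalues, whose magnitudes are controlled by $\sqrt{nq}\,\|H\|_2 + O(1)$ and thus directly yield part (1), while the complementary block of dimension ${n-1 \choose d-1}$ carries the large eigenvalues concentrated near $nq$, treated by a perturbative expansion whose deviation term $\mathscr{E}(\xi')$ is generated independently of $\mathcal{E}$ (through a concentration bound on the degrees of $(d-1)$-cells). I would identify every occurrence of $\mathcal{E}(\xi)$ in that argument and confirm that each instance is used solely as an upper bound on $\mathbb{P}(\|H\|_2 > 2\sqrt{d}+\xi)$. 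Having established this modularity, the proof concludes by substitution: replace $\mathcal{E}(\xi)$ throughout by the sharper tail bound from Corollary \ref{cor:Norm_of_H_2}, specialized to $Z=\mathrm{Ber}(p)$ so that $\text{Var}(Z)=q$; tracking constants as in the proof of Corollary \ref{cor:Norm_of_H_2} gives the exponent $\tfrac{1}{4}\beta_d nq\xi^2$. This produces the failure probability $e^{-\tfrac{1}{4}\beta_d nq\xi^2}$ in part (1) and $e^{-\tfrac{1}{4}\beta_d nq\xi^2}+\mathscr{E}(\xi')$ in part (2), with the intervals and the function $\Gamma(\xi,\xi',n)$ carried over unchanged from \cite[Theorem 6.1]{RK17}.

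The main obstacle is bookkeeping rather than any new probabilistic or combinatorial input: one must verify line by line that no hidden quantitative property of $\mathcal{E}$ beyond being a valid tail bound for $\|H\|_2$ is silently invoked inside \cite[Theorem 6.1]{RK17}, in particular that the hypothesis $nq\ge \frac{d(2d+2\xi')^6\log^6(n)}{n}$ appearing in part (2) emerges only from the perturbative step producing $\mathscr{E}(\xi')$, independently of the choice of tail bound for $\|H\|_2$. Once this verification is performed, and once the hypothesis $nq\gg\log n$---which via Corollary \ref{cor:Norm_of_H_2} guarantees the new tail bound holds for all sufficiently large $n$---is matched against the hypothesis of the corollary, Corollary \ref{cor:5.1} follows immediately from the substitution.
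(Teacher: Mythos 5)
Your proposal is correct and follows the same route as the paper: the paper's proof of Corollary \ref{cor:5.1} consists precisely of the observation that $\mathcal{E}(\xi)$ in \cite[Theorem 6.1]{RK17} serves only as a tail bound for $\mathbb{P}(\|H\|_2 > 2\sqrt{d}+\xi)$, followed by substituting the bound $e^{-\frac{1}{4}\beta_d n\mathrm{Var}(Z)\xi^2}$ from the proof of Corollary \ref{cor:Norm_of_H_2} with $Z=\mathrm{Ber}(p)$ so that $\mathrm{Var}(Z)=q$. The additional structural commentary you give on the internal block decomposition and perturbative step of \cite[Theorem 6.1]{RK17} is consistent with, but goes slightly beyond, what the paper states explicitly.
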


Note that if $nq\gg\log\left(n\right)$ then for all $D>0$ and all
$\xi>0$ we have for all large enough $n$ (depending only on $D$,
$\xi$ and $d$)
\begin{equation}
nq\geq\frac{4D}{C_{d}\left(\sqrt{d}\frac{\xi}{2}\right)^{2}}\log\left(n\right),\label{eq:1-2}
\end{equation}
Moreover, since $nq\gg\log\left(n\right)$ we have for all large enough
$n$ (depending on $d$ and $\xi$)
\[
pd\leq\frac{1}{4}n\left(1-p\right)\xi^{2},
\]
thus 
\begin{equation}
-pd+\sqrt{dnq}\left[-2-\frac{\xi}{2},2+\frac{\xi}{2}\right]\subseteq\sqrt{dnq}\left[-2-\xi,2+\xi\right].\label{eq:3-1}
\end{equation}
Using the first part of the following Corollary, together with \eqref{eq:1-2}
and \eqref{eq:3-1}, we infer that for every $D>0$, every $\xi>0$
and all large enough $n$ (depending on $D$, $\xi$ and $d$), the
${n-1 \choose d}$ smallest eigenvalues of the matrix $A$ are within
the interval $\sqrt{dnq}\left[-2-\xi,2+\xi\right]$ with probability
at least $1-n^{-D}.$

Turning to confine the remaining eigenvalues, using \cite[Theorem 6.1]{RK17},
it follows that $\mathscr{E}\left(\xi_{D}\right)\leq\frac{n^{-D}}{2}$
for an appropriate choice of $\xi_{D}=C'(D+1)>0$, with $C'$ depending
only on $d$. Recalling our assumption $nq\gg\log\left(n\right)$,
which implies $nq\geq\log\left(n\right)$ for all large enough $n$,
it follows that for all large enough $n$ (depending on $D$ and $d$),
$nq\geq\frac{d\left(2d+2\xi{}_{D}\right)^{6}\log^{6}\left(n\right)}{n}$.
We may therefore apply part $\left(2\right)$ of Corollary \ref{cor:5.1},
which together with \eqref{eq:1-2}, shows that for all large enough
$n$ (depending on $D$, $\xi$ and $d$), the remaining ${n-1 \choose d-1}$
eigenvalues of $A$ are inside the interval $nq+\left[-\Gamma\left(\xi,\xi{}_{D},n\right),\Gamma\left(\xi,\xi{}_{D},n\right)\right]$
with probability at least $1-n^{-D}.$ 

Taking $\xi_{0}:=\left(\sqrt{5}-2\right)\sqrt{d}$ and $\xi'=\xi_{D}$,
and can verify that 
\[
\Gamma\left(\xi_{0},\xi{}_{D},n\right)\leq\frac{13}{2}d+100d^{\frac{7}{2}}\left(d+C'(D+1)\right)^{3}\sqrt{q}\log^{3}\left(n\right).
\]
We may therefore conclude that for any $D>0$ and all large enough
$n$ (depending on $d$), the remaining ${n-1 \choose d-1}$ eigenvalues
of $A$ are inside the interval
\[
nq+\Big[\frac{13}{2}d+100d^{\frac{7}{2}}\left(d+C'(D+1)\right)^{3}\sqrt{q}\log^{3}\left(n\right)\Big]\cdot[-1,1]
\]
with probability at least $1-n^{-D}.$ Since by assumption $q\log^{6}\left(n\right)\leq\frac{1}{C\left(1+D\right)^{6}}$,
it follows that the eigenvalues are within the interval 
\[
nq+7d\cdot[-1,1],
\]
provided $C>0$ is chosen large enough (depending only on $d$).\hfill\qed 

\section{Discussion and open questions }

The study of spectrum and in particular the spectral gap raises many
open questions. 
\begin{itemize}
\item Theorem \ref{thm:main_result} and Corollary \ref{cor:Spectral_gap}
provides bounds on the spectral gap in the regime $nq\gg\log n$.
In \cite{MR4116720,MR3945756} it was shown that for the Erd\H{o}s--Rényi
model, namely the case $d=1$, this is the optimal regime. Can one
prove a similar result in the case $d\geq2$?
\item An interesting question that can be asked is regarding the asymptotic
behavior of $\left\Vert H\right\Vert _{S_{2k}}$ and $\left\Vert H\right\Vert _{2}$
in the regime where $d:=d\left(n\right)$. Namely, to consider the
random simplicial complex $X\left(d,n,p\right)$, under the assumption
that $\dim\left(X\right)$ is a function of $n$. 
\item The tail bound obtained in Corollary \ref{cor:Norm_of_H_2} is not
likely to be optimal in the power of $\epsilon$. One can wonder what
is the best power of $\epsilon$ that can be achieved. A more difficult
question is to prove the existence of a limiting distribution $\Psi(t)=\lim_{n\to\infty}\mathbb{P}(\left\Vert H\right\Vert _{2}\geq2\sqrt{d}+t)$
and calculate it. 
\item Our work focuses on a specific model of random simplicial complexes,
namely the Linial-Meshulam model. One can consider different models
of random simplicial complexes, and perhaps use similar tools in order
to establish analogous results. One example of such model is the multi-parameter
random simplicial complex model \cite{CF16,MR3959923}. Another interesting
model to study is the high dimensional analogue of random regular
graph called random Steiner systems, see \cite{MR4027617,RT20}.
\end{itemize}

\bibliographystyle{alpha}
\phantomsection\addcontentsline{toc}{section}{\refname}\bibliography{Bibliography}

$ $\\
Department of mathematics,\\
Technion - Israel Institute of Technology\\
Haifa, 3200003, Israel.\\
Email: leibzirers@gmail.com\\
Email: ron.ro@technion.ac.il
\end{document}